\newtheorem{Thm}{Theorem}[section]
\newtheorem{Lem}[Thm]{Lemma}
\newtheorem{Observation}[Thm]{Observation}
\theoremstyle{definition}
\newtheorem{Def}[Thm]{Definition}
\newtheorem{Ex}[Thm]{Example}
\theoremstyle{remark}
\newtheorem{Remark}[Thm]{Remark}
\newtheorem*{Ack}{Acknowledgements}
\newtheorem*{Proofpart1}{Case 1: $\phi^Y(\mu) \neq 1$}
\newtheorem*{Proofpart2}{Case 2: $\phi^Y (\mu ) =1$ and $\phi^Y (h ) \neq 1$}
\newtheorem*{Proofpart3}{Case 3: $\phi^Y (\mu ) =1$ and $\phi^Y (h ) = 1$}
\begin{document}

\title{Turaev torsion invariants of 3-orbifolds}
\author{Biji Wong}
\address{Department of Mathematics, Brandeis University, MS 050, 415 South Street, Waltham, MA 02453}
\email{wongb@brandeis.edu}
\urladdr{https://sites.google.com/a/brandeis.edu/bijiwong}
\maketitle

\begin{abstract}
We construct a combinatorial invariant of 3-orbifolds with singular set a link that generalizes the Turaev torsion invariant of 3-manifolds. We give several gluing formulas from which we derive two consequences. The first is an understanding of how the components of the invariant change when we remove a curve from the singular set. The second is a formula relating the invariant of the 3-orbifold to the Turaev torsion invariant of the underlying 3-manifold in the case when the singular set is a nullhomologous knot. 
\end{abstract}

\section{Introduction}
In \cite{Tu76}, \cite{Tu86}, and \cite{Tu90}, Turaev introduced a combinatorial invariant of compact, homology oriented 3-manifolds $M$ with $b_1(M) \geq 1$ that takes the form of a function on the set of Euler structures. In \cite{MT}, Meng and Taubes observed that when the 3-manifolds are thought of with their smooth structures, a component of Turaev's torsion invariant, the Milnor torsion invariant, can be realized as a version of the Seiberg-Witten invariant, a function on the set of Spin$^{c}$ structures. Building on their ideas, Turaev showed in \cite{Tu98} that after an identification of Euler and Spin$^{c}$ structures, the Turaev torsion and Seiberg-Witten invariants are in fact equivalent (up to sign). Separately, in \cite{B} Baldridge extended the Seiberg-Witten invariant to compact, homology oriented smooth 3-orbifolds $Y$ with $b_1(|Y|) \geq 1$ and singular set a link. Here $|Y|$ is the underlying 3-manifold of $Y$. Later in \cite{C}, Chen showed that the orbifold Seiberg-Witten invariant of $Y$ can always be recovered from the Seiberg-Witten invariant of $|Y|$, after an identification of the orbifold Spin$^{c}$ structures on $Y$ with the Spin$^{c}$ structures on $|Y|$.

The goal of this paper is to construct a combinatorial invariant of compact, homology oriented 3-orbifolds with singular set a link that generalizes the Turaev torsion invariant of 3-manifolds and is more sensitive to orbifold structures than Baldridge's orbifold Seiberg-Witten invariant.

The organization of the paper is as follows: In section 2, we review the theory of orbifolds and the definition of the Turaev torsion invariant. In section 3, we extend the notion of Euler structures to 3-orbifolds with singular set a link. See Definition \ref{orbiEuler} and Theorem \ref{orbiEulergen}. In section 4, we define the orbifold Turaev torsion invariant and show that it is indeed an invariant, namely independent of the choices made. See Definition \ref{torsiondefinition} and Theorem \ref{Thm: orbifold invariant}, respectively. In section 5, we give several gluing formulas for the orbifold Turaev torsion invariant, generalizing gluing formulas for the regular Turaev torsion invariant. See Theorem \ref{orbifold gluing} and Theorem \ref{gen}. In section 6, we determine how the components of the orbifold Turaev torsion invariant change when we remove a curve from the singular set. See Theorem \ref{app1}. We also give a formula relating the orbifold Turaev torsion invariant to the Turaev torsion invariant of the underlying 3-manifold, in the case when the singular set is a nullhomologous knot. See Theorem \ref{thm: appplication}. The formula will suggest that the orbifold Turaev torsion invariant can be used to detect orbifold structures in contrast to the orbifold Seiberg-Witten invariant. 

\begin{Ack} 
The author would like to thank Weimin Chen for suggesting this problem and for helpful conversations. The author is also grateful to Danny Ruberman for his support and advice and for carefully reading an early draft of the paper. A part of this work was funded by a NSF IGERT fellowship under grant number DGE-1068620 and the NSF FRG grant DMS-1065784.
\end{Ack}

\section{Background on 3-orbifolds and Turaev torsion invariants of 3-manifolds}

\subsection{3-orbifolds} \label{3-orbifolds} We review elements of the theory of 3-orbifolds, for details see \cite{BMP, S83, Th}. A \textit{3-orbifold} $Y$ is a Hausdorff, second-countable space $|Y|$ that is locally modeled on quotients of $\mathbb{R}^3$ by finite subgroups $G$ of $O(3)$. Specifically, there is an atlas $\{U_i, \phi_i\}$, consisting of connected open sets $U_i$ in $|Y|$ and homeomorphisms $\phi_i: \mathbb{R}^3/G_i \rightarrow U_i$, where $G_i$ is a finite subgroup of $O(3)$ that acts continuously and effectively. On each overlap $U_i \subset U_j$ we require a compatibility condition: there is an injective homomorphism $f_{ji}: G_i \rightarrow G_j$ and an embedding $\widetilde{\phi}_{ji}: \mathbb{R}^3 \rightarrow \mathbb{R}^3$, equivariant with respect to $f_{ji}$, such that the following diagram commutes:

\begin{center}
$\begin{CD}
\mathbb{R}^3 @>\widetilde{\phi}_{ji}>> \mathbb{R}^3\\
@VVqV @VVqV\\
\mathbb{R}^3 / G_i @>\phi_{ji}>> \mathbb{R}^3 / G_j\\
@VV\phi_iV @VV\phi_jV\\
U_i @>incl>> U_j
\end{CD}$
\end{center}

\noindent Here $\phi_{ji}$ is the induced map and $q$ is the quotient map. To each $y \in |Y|$, we can associate a group $G_y$, well-defined up to isomorphism: take any chart $U \cong \mathbb{R}^3 / G$ containing $y$, each lift $\widetilde{y}$ of $y$ gives an isotropy subgroup $G_{\widetilde{y}} \subset G$. All of these isotropy subgroups are conjugate, and so $G_y$ is defined to be this isomorphism class of groups. The singular set $\Sigma Y$ consists of points $y \in |Y|$ with $G_y \neq \boldsymbol{1}$. If $\Sigma Y = \emptyset$, then $Y$ is an honest 3-manifold. Note that \textit{3-orbifolds with boundary} are defined in a similar manner.

A \textit{map} between 3-orbifolds $Y_1$ and $Y_2$ is a map between the underlying spaces $|Y_1|$ and $|Y_2|$ that takes charts $U_1 \cong \mathbb{R}^3 / G_1$ into charts $U_2 \cong \mathbb{R}^3 / G_2$, and each restriction $U_1 \rightarrow U_2$ lifts to a map $ \mathbb{R}^3 \rightarrow  \mathbb{R}^3$ that is equivariant with respect to some homomorphism $G_1 \rightarrow G_2$.

An orbifold \textit{covering} of $Y$ is a 3-orbifold $Y'$ with a projection map $p: |Y'| \rightarrow |Y|$ between the underlying spaces, so that each chart neighborhood $U \cong \mathbb{R}^3 / G$ for $Y$ pulls back to a disjoint union of chart neighborhoods for $Y'$, each of the form $\mathbb{R}^3 / H$, where $H$ is a subgroup of $G$, and the chart homeomorphisms, together with $p$, fit inside a certain commutative diagram. In general, $p: |Y'| \rightarrow |Y|$ is not a covering map. As in the regular theory, the \textit{deck group} of an orbifold covering $p: |Y'| \rightarrow |Y|$ consists of orbifold maps $Y' \rightarrow Y'$ that respect $p$. Furthermore, given any 3-orbifold $Y$, we have the notion of an orbifold \textit{universal} cover $\widetilde{Y}$: an orbifold covering that orbifold-covers all other orbifold coverings. $\pi_1^{orb}(Y)$ is defined to be the deck group of $\widetilde{Y}$ and $H_1^{orb}(Y)$ is defined to be the abelianization of $\pi_1^{orb}(Y)$.

In this paper, the 3-orbifolds $Y$ are compact, connected, and oriented with singular set $\Sigma Y$ an oriented link $L_1 \cup \ldots \cup L_k$ and boundary $\partial Y = \emptyset$ or a union of tori. Centered around each $L_i$ is a neighborhood of the form $(S^1 \times D^2) / \mathbb{Z}_{\alpha_i}$, where $\mathbb{Z}_{\alpha_i}$ acts by rotations about the core. Let $E$ denote the complement of the interiors of these neighborhoods. Then $H_1^{orb}(Y) \cong H_1(E)/ \langle \mu_1^{\alpha_1}, \ldots, \mu_k^{\alpha_k} \rangle$, where $\mu_i$ is the meridian of $L_i$ oriented so that its linking number with $L_i$ is 1. We will be interested in the orbifold cover $\widehat{Y}$ of $Y$ with deck group $H_1^{orb}(Y)$. It can be constructed in the following way: start with the regular cover $\overline{E}$ of $E$ with deck group $H_1^{orb}(Y)$. Then canonically extend $\partial \overline{E}$ to cover $\bigcup\limits_{i=1}^k (S^1 \times D^2) / \mathbb{Z}_{\alpha_i}$. For details, see \cite[Chapter 2.2.2]{BMP}. We conclude Section \ref{3-orbifolds} with several examples:

\begin{Ex}
Let $\Sigma^n(K)$ be the $n$-fold cyclic branched cover of $S^3$ branched along $K$. Then there is a natural action of $\mathbb{Z}_n$ on $\Sigma^n(K)$, and the quotient space can be thought of as the 3-orbifold $(S^3, K, n)$, where the underlying space is $S^3$, the singular set is $K$, and for any point $y$ on $K$ $G_y \cong \mathbb{Z}_n$. Furthermore, $H_1^{orb}(S^3, K, n) \cong \mathbb{Z}_n$ and $\Sigma^n(K)$ is the  orbifold cover of $(S^3, K, n)$ with deck group $H_1^{orb}(S^3, K, n)$. In the notation above, $\widehat{(S^3, K, n)} = \Sigma^n(K)$. Note that $\widehat{(S^3, K, n)}$ is an honest 3-manifold.
\end{Ex}

\begin{Ex}
Let $Y$ denote an equivariant neighborhood $(S^1 \times \mathring{D^2}) / \mathbb{Z}_{\alpha}$. Then $Y$ is a 3-orbifold with singular set $S^1 \times \textbf{0}$, $H_1^{orb}(Y) \cong \mathbb{Z} \times \mathbb{Z}_{\alpha}$, and $\widehat{Y} = \mathbb{R} \times \mathring{D^2}$. Here $\widehat{Y}$ is also an honest 3-manifold.
\end{Ex}

\begin{Ex}
Let $Y$ be the 3-orbifold $(S^2, 2, 3, 5) \times S^1$, where $(S^2, 2, 3, 5)$ is the 2-orbifold with three singular points of multiplicities 2, 3, and 5. Then $H_1^{orb}(Y) \cong \mathbb{Z}$ and $\widehat{Y}$ is the 3-orbifold $(S^2, 2, 3, 5) \times \mathbb{R}$.
\end{Ex}


\subsection{Turaev torsion invariants of 3-manifolds}\label{torsion 3-manifolds}

We start by recalling the torsion of a chain complex. For details, see \cite{M66, Tu86, Tu01, Tu02}. Let $C = (\textbf{0} \rightarrow C_m \xrightarrow{\partial_{m-1}} C_{m-1} \xrightarrow{\partial_{m-2}} \cdots \xrightarrow{\partial_0} C_0 \rightarrow \textbf{0})$ be a chain complex of finite-dimensional vector spaces $C_i$ over a field $F$. Suppose $C$ and $H(C)$ are based: for each $i$ we have an ordered basis $c_i$ for $C_i$ and an ordered basis $\overline{h_i}$ for $H_i(C)$. Let $h_i$ be a representative for $\overline{h_i}$. Note $h_i$ is an ordered basis for $Ker( \partial_{i-1} : C_i \rightarrow C_{i-1})$. For each $i$, choose a sequence $b_i$ of vectors in $C_i$ with the property that $\partial_{i-1}(b_i)$ is an ordered basis in $Im(\partial_{i-1})$. Then for every $i$, the sequence $\partial_{i}(b_{i+1})h_ib_i$, gotten by concatenating, is an ordered basis for $C_i$. We can compare the given basis $c_i$ to this new basis. Let $[\partial_{i}(b_{i+1})h_ib_i / c_i]$ denote the determinant of the change of basis matrix from $c_i$ to $\partial_{i}(b_{i+1})h_ib_i$. 

\begin{Def}
The \textit{torsion} $\tau(C)$ of $C$ is defined to be 
\[(-1)^{|C|} \prod\limits_{i=0}^m  [\partial_{i}(b_{i+1})h_ib_i / c_i]^{(-1)^{i+1}} \in F - \{0\},
\]
where
\[
|C| = \sum\limits_{i=0}^m \Big( \sum\limits_{r=0}^i \textrm{dim } C_r\Big)\Big(\sum\limits_{r=0}^i \textrm{dim } H_r(C)\Big) \in \mathbb{Z}_2.
\]
\end{Def}

\begin{Remark}
$\tau(C)$ depends on the given bases for $C$ and $H(C)$, but not on the choices of $b_i$ and $h_i$.
\end{Remark}

\noindent If $C$ is acyclic, then each $H_i(C)=0$, and the definition of $\tau(C)$ simplifies to $\prod\limits_{i=0}^m  [\partial_{i}(b_{i+1})b_i / c_i]^{(-1)^{i+1}}$.

Let $M$ be a compact, connected, orientable 3-manifold with empty or toroidal boundary $\partial M$ and a fixed cell structure. Let $\widehat{M}$ denote the cover of $M$ with deck group $H_1(M)$. $\widehat{M}$ inherits a cell structure. Consider the cellular chain complex $C(\widehat{M})$ of $\widehat{M}$ with $\mathbb{Z}$ coefficients. The free action of $H_1(M)$ on the cells in $\widehat{M}$ gives $C(\widehat{M})$ the structure of a free $\mathbb{Z}[H_1(M)]$-chain complex. Let $I: \mathbb{Z}[H_1(M)] \hookrightarrow Q(\mathbb{Z}[H_1(M)])$ be the inclusion of $\mathbb{Z}[H_1(M)]$ into its quotient ring $Q(\mathbb{Z}[H_1(M)])$. Because $H_1(M)$ is a finitely generated abelian group, $Q(\mathbb{Z}[H_1(M)])$ splits in a canonical way (up to order of the factors) as a direct sum $\bigoplus\limits_{l=1}^r F_l$ of fields $F_l$, indexed by equivalence classes of characters of $Tor(H_1(M))$. For every $l$, we have the ring map $\phi_l: \mathbb{Z}[H_1(M)] \rightarrow F_l$ gotten by starting with $I$, applying the canonical splitting, and then taking the projection to the $l$th component. From each $\phi_l$, we get a free chain complex $C^{\phi_l}(\widehat{M})= C(\widehat{M}) \otimes_{\phi_l} F_l$ over the field $F_l$.

Now orient and order the cells in $M$. Pick a lift of the cells in $M$ to $\widehat{M}$. Each cell in the lift inherits an orientation, and the lift inherits an ordering. Then for every $l$, the chain complex $C^{\phi_l}(\widehat{M})$ is based. If $C^{\phi_l}(\widehat{M})$ is acyclic, then set $\tau^{\phi_l}(M)= \tau(C^{\phi_l}(\widehat{M}))$. Otherwise, set $\tau^{\phi_l}(M)=0$. Let $\tau(M)$ denote the resulting element $\tau^{\phi_1}(M) + \cdots + \tau^{\phi_r}(M)$ in $Q(\mathbb{Z}[H_1(M)])$. $\tau(M)$ depends on the orientation and order of the cells in $M$, and on the way we lift the cells to $\widehat{M}$. Changing any of these choices changes $\tau(M)$ by an element in $Q(\mathbb{Z}[H_1(M)])$ of the form $\pm h$ where $h \in H_1(M)$, and hence $\tau(M)$ is not well-defined.

There are a couple of ways to get around this. The classical approach is to think of $\tau(M)$ as an element of $Q(\mathbb{Z}[H_1(M)])/ \pm H_1(M)$; then $\tau(M)$ is well-defined. With this perspective, Milnor \cite{M62} showed that if $E$ denotes the exterior of a knot $K$ in $S^3$, then $\tau(E)$ is the Alexander polynomial of $K$, up to a factor. We take the second approach, due to Turaev \cite{Tu86, Tu90}. Here the ambiguity in $\tau(M)$ is removed by equipping $M$ with a homology orientation and an Euler structure. 

\begin{Def}
A \textit{homology orientation} $\omega$ on $M$ is an orientation of the $\mathbb{R}$-vector space $\bigoplus\limits_{n=0}^3 H_n(M, \mathbb{R})$.
\end{Def}


\begin{Def}
An \textit{Euler structure} $\textbf{e}$ on $M$ is a lift of the cells in $M$ to $\widehat{M}$, considered up to the following equivalence: given lifts $\{\hat{e}_i\}_{i \in I}$ and $\{\hat{f}_i \}_{i \in I}$ in $\widehat{M}$ of the cells $\{e_i\}_{i \in I}$ in $M$, we say $ \{\hat{e}_i\}_{i \in I} \sim \{\hat{f} _i\}_{i \in I}$ if the product $\prod\limits_{i \in I} (\hat{f}_i / \hat{e}_i)^{(-1)^{\text{dim } e_i}} \in H_1(M)$ equals 1. Here $\hat{f}_i / \hat{e}_i$ denotes the unique element in $H_1(M)$ that takes $\hat{e}_i$ to $\hat{f}_i$.
\end{Def}


\begin{Remark}
Let $Eul(M)$ denote the set of Euler structures on $M$. There is a free and transitive action of $H_1(M)$ on $Eul(M)$: if $h \in H_1(M)$ and $[\{\hat{e}_i\}_{i \in I}] \in Eul(M)$, then $h \cdot [\{\hat{e}_i\}_{i \in I}]$ is the Euler structure $[\{\hat{f}_i\}_{i \in I}]$ with the property that for all representatives $\{\hat{e}_i\}_{i \in I}$ of $[\{\hat{e}_i\}_{i \in I}]$ and $\{\hat{f}_i\}_{i \in I}$ of $[\{\hat{f}_i\}_{i \in I}]$, the product $\prod\limits_{i \in I} (\hat{f}_i / \hat{e}_i)^{(-1)^{\text{dim } e_i}} = h$. As a result, $Eul(M)$ can be thought of as a translate of $H_1(M)$.
\end{Remark}

Given a homology orientation $\omega$ and an Euler structure $\textbf{e}$ on $M$, we get a well-defined element $\tau(M, \textbf{e}, \omega) \in Q(\mathbb{Z}[H_1(M)])$ as follows. As above, orient and order the cells $\{e_i\}_{i \in I}$ in $M$. For every $n \in \{0, 1, 2, 3\}$, pick an ordered basis $\omega_n$ for the $\mathbb{R}$-vector space $H_n(M, \mathbb{R})$ so that the sequence $\{\omega_n \}_{n=0}^{3}$ realizes the homology orientation $\omega$. Our choices of orientation, order, and $\omega_n$'s base and homology base the cellular chain complex $C(M, \mathbb{R})$ of $M$ over $\mathbb{R}$, and allow us to compute the torsion $\tau(C(M, \mathbb{R}))$ of $C(M, \mathbb{R})$. Let $\tau_0$ denote the sign of $\tau(C(M, \mathbb{R}))$. Now choose a representative $\{\hat{e}_i\}_{i \in I}$ of $\textbf{e}$. Applying the above construction to this choice of orientation, order, and lift $\{\hat{e}_i\}_{i \in I}$ gives us the element $\tau(M)= \tau^{\phi_1}(Y) + \cdots + \tau^{\phi_r}(M) \in Q(\mathbb{Z}[H_1(M)])$. $\tau(M, \textbf{e}, \omega)$ is defined to be $\tau_0 \cdot \tau(M)$.

\begin{Thm}[\cite{Tu86, Tu90}]
$\tau(M, \textbf{e}, \omega)$ does not depend on the orientation and order of the cells in $M$, the sequence $\{\omega_n \}_{n=0}^{3}$ of bases realizing $\omega$, or on the representative $\{\hat{e}_i\}_{i \in I}$ of $\textbf{e}$.
\end{Thm}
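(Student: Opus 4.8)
The plan is to show that changing each of the three pieces of data---the orientation/order of the cells, the sequence $\{\omega_n\}$ realizing $\omega$, and the lift $\{\hat e_i\}$ representing $\mathbf{e}$---leaves $\tau(M,\mathbf{e},\omega)$ unchanged, treating each change one at a time and using the multiplicative properties of the torsion of a based chain complex. The central tool is the standard behavior of $\tau(C)$ under a change of basis: if we replace the given bases $c_i$ (and homology bases $\overline{h_i}$) of a based chain complex $C$ by new ones, then $\tau(C)$ gets multiplied by $\prod_i \det(\text{change of basis on } C_i)^{(-1)^{i+1}} \cdot \prod_i \det(\text{change of basis on } H_i)^{(-1)^{i}}$, up to the sign coming from the $(-1)^{|C|}$ term; I will quote this from the references (\cite{M66,Tu86,Tu01,Tu02}) rather than rederive it.

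First I would handle reordering and reorienting the cells. A permutation of the cells in dimension $n$, together with orientation reversals on some of them, acts on the lifted basis of $C^{\phi_l}(\widehat M)$ by a signed permutation matrix, whose determinant is $\pm 1$; the same signed permutation acts on $C(M,\mathbb R)$. One checks that the product of these $\pm 1$'s over all dimensions, weighted by $(-1)^{i+1}$, is exactly the factor by which both $\tau^{\phi_l}(M)$ and the sign $\tau_0$ of $\tau(C(M,\mathbb R))$ change; since $\tau(M,\mathbf e,\omega)=\tau_0\cdot\sum_l\tau^{\phi_l}(M)$, the two sign changes cancel. One must also be careful that $|C|$, which lives in $\mathbb Z_2$, is unaffected since it depends only on the dimensions of the chain and homology groups, not on the chosen bases, and that the reordering is applied compatibly to $C(M,\mathbb R)$ and to each $C^{\phi_l}(\widehat M)$.

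Next I would treat the choice of bases $\omega_n$ realizing $\omega$. Two such sequences differ, in each degree $n$, by a basis change of $H_n(M,\mathbb R)$ whose determinant has a well-defined sign, and the total sign $\prod_n(\pm1)$ is $+1$ precisely because both sequences induce the \emph{same} orientation $\omega$ on $\bigoplus_n H_n(M,\mathbb R)$. This affects only $\tau_0=\operatorname{sign}\tau(C(M,\mathbb R))$ (the fields $F_l$ and the complexes $C^{\phi_l}(\widehat M)$ do not see $\omega$ at all), and the change-of-basis formula for torsion shows the sign of $\tau(C(M,\mathbb R))$ is multiplied by that total sign $+1$; hence $\tau_0$, and therefore $\tau(M,\mathbf e,\omega)$, is unchanged. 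Finally, for the lift representing $\mathbf e$: two lifts $\{\hat e_i\}$ and $\{\hat f_i\}$ of the same Euler structure differ by elements $h_i:=\hat f_i/\hat e_i\in H_1(M)$ with $\prod_i h_i^{(-1)^{\dim e_i}}=1$. Passing through $\phi_l$, the new basis of $C^{\phi_l}(\widehat M)$ differs from the old by a diagonal matrix with entries $\phi_l(h_i)$ in degree $\dim e_i$, so $\tau^{\phi_l}(M)$ is multiplied by $\prod_i\phi_l(h_i)^{(-1)^{\dim e_i+1}}=\phi_l\!\big(\prod_i h_i^{(-1)^{\dim e_i+1}}\big)=\phi_l(1)^{-1}=1$; the sign $\tau_0$ is untouched because $C(M,\mathbb R)$ does not involve the lift. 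Summing over $l$ gives the invariance.

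The main obstacle is the bookkeeping in the first step: one has to verify that the sign by which the \emph{real} torsion $\tau(C(M,\mathbb R))$ changes under a reorder/reorient really does coincide with the sign by which each $\tau^{\phi_l}(M)$ changes, so that they cancel in the product $\tau_0\cdot\tau(M)$---this requires matching up the combinatorics of the change-of-basis determinants over $\mathbb R$ with those over each $F_l$, and keeping track of the $(-1)^{|C|}$ normalization. Once that sign-cancellation lemma is pinned down, the remaining two steps are essentially immediate from the change-of-basis formula together with, respectively, the definition of a homology orientation and the equivalence relation defining an Euler structure. I would also remark that this argument is entirely parallel to Turaev's original proof in \cite{Tu86,Tu90}, so the write-up can be brief and can cite those sources for the sign lemma.
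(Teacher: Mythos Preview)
The paper does not actually give a proof of this theorem: it is stated as a result of Turaev with the citation \cite{Tu86,Tu90} and no argument is supplied. Your outline is correct and is the standard argument; indeed, the paper's own proof of the orbifold analogue (Theorem~\ref{Thm: orbifold invariant}) follows exactly the strategy you describe, invoking Lemma~\ref{Lem: chaingluing} for the change-of-basis formula, computing the diagonal change-of-basis determinants for two lifts representing the same Euler structure, and deferring the reorientation/reordering and $\{\omega_n\}$ steps to ``the regular case.''
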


\begin{Remark}
$\tau(M, \textbf{e}, \omega)$ does depend on $\textbf{e}$ and $\omega$.
\end{Remark}

\begin{Def}
Fixing $\omega$, we get a well-defined function $\tau: Eul(M) \rightarrow Q(\mathbb{Z}[H_1(M)])$ that sends an Euler structure $\textbf{e}$ to $\tau(M, \textbf{e}, \omega)$. The \textit{Turaev torsion invariant} of $M$ is $\tau$.
\end{Def}

\begin{Remark}
Let $M'$ denote $M$ with a different cell structure. Then there is a canonical identification $\theta : Eul(M') \rightarrow Eul(M)$, and the Turaev torsion invariant $\tau '$ of $M'$ equals $ \tau \circ \theta$. For details, see \cite[Chapters 1.2.1, 1.2.2]{Tu02}. As a result, we won't worry about the choice of cell structure.
\end{Remark}

We will need the following properties of $\tau$.

\begin{Thm}[{{\cite[Theorem 4.1]{Tu97}}}]\label{ThmTu97}
Assume $b_1 (M) \geq 2$. Then $\tau(M, \textbf{e}, \omega) \in \mathbb{Z}[H_1(M)]$ for every homology orientation $\omega$ and Euler structure $\textbf{e}$. Consequently, we will think of the invariant $\tau$ as a map $Eul(M) \rightarrow \mathbb{Z}[H_1(M)]$. Furthermore, if $F$ is a field and $\phi: \mathbb{Z}[H_1(M)] \rightarrow F$ is a ring homomorphism that is nontrivial on $H_1(M)$, then for every $\omega$ and $\textbf{e}$, the image of $\tau(M, \textbf{e}, \omega)$ under $\phi$ is the well-defined element $\tau_0 \cdot \tau^{\phi}(M)$, computed by picking any representative of $\textbf{e}$ and any sequence $\{\omega_n \}_{n=0}^{3}$ of bases realizing $\omega$. We will denote $\tau_0 \cdot \tau^{\phi}(M)$ by $\tau^{\phi}(M, \textbf{e}, \omega)$.
\end{Thm}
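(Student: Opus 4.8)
The plan is to reduce everything to the classical fact that, over the quotient field $Q(\mathbb{Z}[H_1(M)])$, the element $\tau(M,\mathbf e,\omega)$ is computed cell-by-cell and splits compatibly with the splitting $Q(\mathbb{Z}[H_1(M)])=\bigoplus_l F_l$. First I would recall that $\tau(M,\mathbf e,\omega) = \sum_l \tau^{\phi_l}(M,\mathbf e,\omega)$ with each summand living in $F_l$, and that the hypothesis $b_1(M)\ge 2$ is exactly what forces the trivial-character summand $F_1 = Q(\mathbb{Q}[\mathbb{Z}^{b_1}])$ to contribute $0$: indeed $C^{\phi_1}(\widehat M)$ computes $H_*(M;\mathbb{Q})$ tensored up, and when $b_1(M)\ge 2$ one has $H_1(M;\mathbb{Q})$ of rank $\ge 2$, so that after inverting, the complex fails to be acyclic over $F_1$ and $\tau^{\phi_1}(M)=0$ by definition. (This is the only place $b_1\ge 2$ rather than $b_1\ge 1$ is needed; for $b_1=1$ the trivial summand can contribute a genuine rational function, e.g. for knot exteriors.) So all surviving summands come from $\phi_l$ nontrivial on the torsion, and for those the standard argument of Turaev shows $\tau^{\phi_l}(M,\mathbf e,\omega)$ lies in the image of $\mathbb{Z}[H_1(M)]\to F_l$: one uses that the Alexander-type matrix presenting the complex can be taken with entries in $\mathbb{Z}[H_1(M)]$, that the determinant of an appropriate square submatrix is the torsion, and that the relevant minors involving $\sum_{h\in Tor} h$-type factors are killed. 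Summing over $l$, the total element lands in $\mathbb{Z}[H_1(M)]$ itself, which is the first assertion; this is essentially the content of \cite[Theorem 4.1]{Tu97} and I would simply cite it there.

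For the second assertion, fix a field $F$ and a ring homomorphism $\phi:\mathbb{Z}[H_1(M)]\to F$ nontrivial on $H_1(M)$. The key observation is that $\phi$ factors through exactly one of the fields $F_{l}$ in the canonical splitting — namely the one indexed by the equivalence class of the character $\phi|_{Tor(H_1(M))}$ — composed with a field embedding (or at least a ring map) $F_{l}\to F$; this is because $Q(\mathbb{Z}[H_1(M)])$ is the total quotient ring and $\phi$, being nontrivial on $H_1(M)$, extends to it after localizing away from the kernel, and the splitting into the $F_l$ is canonical. Next I would invoke naturality of torsion under a ring map between fields: if $C$ is a based complex over $F_l$ and $F_l\to F$ is a ring homomorphism such that $C\otimes F$ is still acyclic (equivalently $\tau(C)\ne 0$ maps to something nonzero, which is automatic here since $\phi(\tau^{\phi_l}(M))\ne 0$ iff the complex stays acyclic), then $\tau(C\otimes F)=\phi(\tau(C))$. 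Combining, $\phi(\tau(M,\mathbf e,\omega)) = \phi\big(\sum_l \tau^{\phi_l}(M,\mathbf e,\omega)\big)$; all summands with index $\ne l$ map to $0$ because $\phi$ annihilates the corresponding idempotents, and the one surviving summand maps to $\tau_0\cdot\tau^{\phi}(M)$ by the naturality just described. That it is computed by \emph{any} representative of $\mathbf e$ and \emph{any} basis sequence realizing $\omega$ follows because changing the lift multiplies the pre-torsion by $\pm h$ for some $h\in H_1(M)$, whose $\phi$-image is $\pm\phi(h)\in F$, an honest unit — but once we have pinned down $\mathbf e$ and $\omega$ the classical Turaev theorem already says $\tau(M,\mathbf e,\omega)$ itself is well-defined, so its image under $\phi$ is too, and any admissible choice of representative/basis computes it.

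The main obstacle — or rather the point requiring the most care — is the bookkeeping of the canonical splitting: one must check that $\phi$ genuinely kills all but one idempotent of $Q(\mathbb{Z}[H_1(M)])$ and that the induced map on the surviving factor is compatible with the torsion computation, i.e. that there is no sign or unit discrepancy between $\tau^{\phi}(M)$ as defined directly from $\phi$ and the image of $\tau^{\phi_l}(M)$ under $F_l\to F$. This is where the hypothesis that $\phi$ is nontrivial on $H_1(M)$ (not merely on $Tor$) is used, together with the fact that the sign correction $\tau_0$ is defined purely from the real chain complex $C(M;\mathbb{R})$ and hence is insensitive to which coefficient field we ultimately pass to. Everything else is a direct translation of the well-definedness theorem of Turaev already quoted in the excerpt; I would keep the write-up short and lean on \cite{Tu97, Tu02} for the splitting lemma and on the functoriality of Reidemeister torsion under field extensions.
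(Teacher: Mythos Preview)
The paper does not prove this statement at all: it is quoted verbatim as \cite[Theorem~4.1]{Tu97} and used as background input, with no argument supplied. So there is no ``paper's own proof'' to compare against, and any proof you give is necessarily your own.

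That said, your sketch contains a genuine error in the integrality part. You claim that $b_1(M)\ge 2$ forces the summand $F_1 = Q(\mathbb{Q}[\mathbb{Z}^{b_1}])$ indexed by the trivial character of $Tor(H_1(M))$ to contribute zero, on the grounds that ``$C^{\phi_1}(\widehat M)$ computes $H_*(M;\mathbb{Q})$ tensored up.'' This is false: $\phi_1$ kills only the torsion of $H_1(M)$, not the free part, so $C^{\phi_1}(\widehat M)$ is the chain complex of the maximal \emph{free} abelian cover with coefficients in the function field $F_1$, not the rational chain complex of $M$. That complex is frequently acyclic (e.g.\ for link exteriors with at least two components), and its torsion is essentially the multivariable Alexander polynomial --- generically nonzero. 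The role of $b_1\ge 2$ in Turaev's argument is not to kill this summand but to guarantee that the torsion, a priori a rational function in several variables, has no denominator; this uses duality and the structure of the Alexander module and is the actual content of \cite[Theorem~4.1]{Tu97}. Your proposed mechanism would, if correct, also kill the Alexander polynomial of every link, which it does not.

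Your treatment of the second assertion is closer to correct. The factorisation of $\phi$ through a single $F_l$ is fine on the level of $\mathbb{Z}[H_1(M)]$ (determined by $\phi|_{Tor}$), but note that $\phi$ need not extend to a map of fields $F_l\to F$: elements of the free part of $H_1(M)$ may be sent to non-units or even to $1$, so $F_l\to F$ can fail to exist. This is harmless precisely because the first part gives $\tau(M,\mathbf e,\omega)\in\mathbb{Z}[H_1(M)]$, so you only need to apply $\phi$ to an integral element and then invoke naturality of torsion under ring change at the level of the based complex, not at the level of its quotient field. You should phrase it that way rather than via an extension $F_l\to F$.
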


Now suppose $M$ is obtained by gluing a solid torus $S^1 \times D^2 \subset \mathbb{C} \times \mathbb{C}$ to a compact, connected, orientable 3-manifold $E$ with toroidal boundary. We will need two gluing formulas relating the Turaev torsion invariant $\tau_M$ of $M$ to the Turaev torsion invariant $\tau_E$ of $E$. To state them, we first need to explain how the homology orientations and Euler structures on $M$ are related to those on $E$:

Let $\omega$ be a homology orientation on $E$. Orient the core circle $S^1 \times \textbf{0}$ of the solid torus $S^1 \times D^2$. We get an induced homology orientation $\omega^M$ on $M$ as follows. First, fix an orientation of $\textbf{1} \times D^2$. This orients the $\mathbb{R}$-vector space $H_2(S^1 \times D^2, S^1 \times \partial D^2, \mathbb{R}) \cong \mathbb{R}$. By multiplying the orientation of $S^1 \times \textbf{0}$ with the orientation of $\textbf{1} \times D^2$, the solid torus $S^1 \times D^2$ inherits an orientation. This orients the $\mathbb{R}$-vector space $H_3(S^1 \times D^2, S^1 \times \partial D^2, \mathbb{R}) \cong \mathbb{R}$. Note that $H_n(S^1 \times D^2, S^1 \times \partial D^2, \mathbb{R}) = 0$ for $n \neq 2, 3$. By excision, $H_n(M, E, \mathbb{R}) \cong H_n(S^1 \times D^2, S^1 \times \partial D^2, \mathbb{R})$, and so we get an orientation $\omega^{(M,E)}$ of $\bigoplus\limits_{n=0}^3 H_n(M, E, \mathbb{R})$; we will think of $\omega^{(M,E)}$ as the induced homology orientation on $(M, E)$. Note that $\omega^{(M,E)}$ does not depend the choice of orientation for $\textbf{1} \times D^2$. Now consider the long exact sequence $\mathcal{H}$ of the pair $(M, E)$. There is a unique homology orientation $\widetilde{\omega^M}$ on $M$ so that the torsion $\tau(\mathcal{H})$ of $\mathcal{H}$ with respect to bases realizing the homology orientations $\omega$, $\omega^{(M,E)}$, $\widetilde{\omega^M}$ has positive sign. We define the homology orientation $\omega^M$ on $M$ induced by the homology orientation $\omega$ on $E$ to be $(-1)^{1+(b_1(E)+1)(b_1(M)+1)}\widetilde{\omega^M}$. Note that the sign $(-1)^{1+(b_1(E)+1)(b_1(M)+1)}$ is needed to ensure certain properties of $\omega^M$. For details, see \cite[Chapter 5.2]{Tu02}.

We now explain how the Euler structures are related. We assume that the solid torus $S^1 \times D^2$ is equipped with the following (open) cell decomposition: the boundary $S^1 \times \partial D^2$ is given the standard structure consisting of one 0-cell $(1, 1)$, two 1-cells $(S^1 - \textbf{1}) \times \textbf{1}$ and $\textbf{1} \times (\partial D^2 - \textbf{1})$, and one 2-cell $(S^1 - \textbf{1}) \times \partial (D^2 - \textbf{1})$, while the interior is given the cell decomposition consisting of one 0-cell $e^0 = (1, 0)$, two 1-cells $e^1_1 = \textbf{1} \times int ([0,1]) $ \& $e^1_2 = (S^1 - \textbf{1}) \times \textbf{0}$, two 2-cells $e^2_1 = (S^1 - \textbf{1}) \times int([0,1])$ \& $e^2_2 = \textbf{1} \times int(D^2)$, and one 3-cell $e^3 = (S^1 - \textbf{1}) \times int(D^2)$. This induces a cell decomposition of $\partial E$. Extend this to a cell decomposition of $E$, giving us a decomposition of $M$. Let $\textbf{e}$ be an Euler structure on $E$. Orient the core circle $S^1 \times \textbf{0}$ of the solid torus $S^1 \times D^2$. We get an induced Euler structure $\textbf{e}^M$ on $M$ as follows. First, from the orientation of $S^1 \times \textbf{0}$ we get a distinguished element $h \in H_1(M)$. Next, pick a lift $\{\hat{e}_j\} \subset \widehat{E}$ representing $\textbf{e}$. By covering space theory, we can always find a projection of $\widehat{E}$ to $\widehat{M}$ that is a lift of the inclusion $E \hookrightarrow M$ and is equivariant with respect to the induced homomorphism $H_1(E) \rightarrow H_1(M)$. Fix one of them. Then we can think of $\{\hat{e}_j\}$ as a lift of the cells in $E \subset M$ to $\widehat{M}$: over each cell in $E \subset M$ lies exactly one cell in $\{\hat{e}_j\} \subset \widehat{M}$. Now lift the cells $e^0, \ldots, e^3$ in the interior of $S^1 \times D^2 \subset M$ to cells $\hat{e}^0, \ldots, \hat{e}^3$ in $\widehat{M}$ so that $\partial (\hat{e}^1_2) = \pm (h-1)\hat{e}^0$, $\partial (\hat{e}^2_1) = \pm (h-1)\hat{e}^1_1 \pm \hat{e}^1_2$ modulo a 1-cell lying over $S^1 \times \partial D^2$, and $\partial (\hat{e}^3) = \pm (h-1)\hat{e}^2_2$ modulo a 2-cell lying over $S^1 \times \partial D^2$. We set $\textbf{e}^M$ to be the well-defined Euler structure represented by this family $\{\hat{e}_j\} \cup \{\hat{e}^0, \ldots, \hat{e}^3\}$ of lifts in $\widehat{M}$. 

\begin{Remark}
Our choice of cell structure on $S^1 \times D^2$ differs from the one in \cite{Tu02}: the core circle $S^1 \times \textbf{0}$ now forms a subcomplex. We will need this later.
\end{Remark}

The gluing formulas that we will need are as follows:

\begin{Thm}[{{\cite[Lemma 7.1.1 and Lemma 8.1.2]{Tu02}}}]\label{gluing lemmas}
Let $E$ be a compact, connected, orientable 3-manifold with $\partial E$ consisting of tori. Let $M$ be a 3-manifold obtained by gluing a solid torus $S^1 \times D^2$ to $E$ along a component of $\partial E$. Suppose $S^1 \times D^2$ is given the cell structure from above and that $E$ is given a compatible cell structure, inducing a cell structure on $M$. Fix an Euler structure $\textbf{e}$ and a homology orientation $\omega$ on $E$. This induces an Euler structure $\textbf{e}^M$ and a homology orientation $\omega^M$ on $M$. Orient $S^1 \times \textbf{0} \subset S^1 \times D^2$, and let $h \in H_1(M)$ denote the corresponding homology class. Let $F$ be a field, and let $\phi : \mathbb{Z}[H_1(E)] \rightarrow F$ be a ring homomorphism that extends to a ring homomorphism $\phi^M : \mathbb{Z}[H_1(M)] \rightarrow F$. We have a couple of cases:

\begin{enumerate}
\item Suppose $\phi^M (h) \neq 1$. Then $\tau ^{\phi^M} (M, \textbf{e}^M, \omega^M) = \tau^{\phi}(E, \textbf{e}, \omega) \cdot (\phi^M (h )- 1)^{-1}$.
\item Suppose $\phi^M (h) = 1$. Suppose further that $C^{\phi^M}(\widehat{M})$ is acyclic. Orient the meridian $\textbf{1} \times \partial D^2$ of $S^1 \times D^2$ so that its linking number with $S^1 \times \textbf{0}$ is 1. Let $e^2_2$ denote the 2-cell in $S^1 \times D^2$. Orient $e^2_2$ so that $\partial (e^2_2)= \textbf{1} \times \partial D^2$. Let $e^3$ denote the 3-cell $(S^1 - \textbf{1}) \times int(D^2)$ in $S^1 \times D^2$. Give $e^3$ the product orientation. Then we can lift $e^2_2$ to an oriented 2-cell $\hat{e}^2_2 \subset \widehat{M}$ and $e^3$ to an oriented 3-cell $\hat{e}^3 \subset \widehat{M}$ so that the homology classes $\big( \partial ( \hat{e}^2_2) \cap \widehat{E} \big) \otimes 1 \in H_{1}(C^{\phi}(\widehat{E}))$, $\big( \partial (\hat{e}^3) \cap \widehat{E} \big) \otimes 1 \in H_{2}(C^{\phi}(\widehat{E}))$ form a basis for $\bigoplus\limits_{i=0}^3 H_{i}(C^{\phi}(\widehat{E}))$. Furthermore, $\tau ^{\phi^M} (M, \textbf{e}^M, \omega^M) = \tau ^{\phi} \Big( E, \textbf{e}, \omega; \{ \big( \partial ( \hat{e}^2_2) \cap \widehat{E} \big) \otimes 1, \big( \partial (\hat{e}^3) \cap \widehat{E} \big) \otimes 1 \} \Big)$.
 \end{enumerate}
 \end{Thm}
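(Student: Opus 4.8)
The plan is to prove this by comparing the chain complexes of $\widehat{M}$ and $\widehat{E}$ directly, using the short exact sequence of complexes that arises from the explicit cell structure on $S^1 \times D^2$. Specifically, the inclusion $E \hookrightarrow M$ together with the chosen equivariant projection $\widehat{E} \to \widehat{M}$ realizes $C^{\phi}(\widehat{E})$ as a subcomplex of $C^{\phi^M}(\widehat{M})$, with quotient complex $D$ generated by the lifts $\hat{e}^0, \hat{e}^1_1, \hat{e}^1_2, \hat{e}^2_1, \hat{e}^2_2, \hat{e}^3$ of the six open cells in the interior of the solid torus. I would first record the boundary maps in $D$: after tensoring with $F$ via $\phi^M$, the cells over $S^1 \times \partial D^2$ disappear, and the conditions $\partial(\hat{e}^1_2) = \pm(h-1)\hat{e}^0$, $\partial(\hat{e}^2_1) = \pm(h-1)\hat{e}^1_1 \pm \hat{e}^1_2$, $\partial(\hat{e}^3) = \pm(h-1)\hat{e}^2_2$ (and $\partial \hat{e}^1_1 = \partial \hat{e}^2_2 = 0$ in $D$) determine $D$ completely. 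Then $\tau(C^{\phi^M}(\widehat{M}))$ is computed from $\tau(C^{\phi}(\widehat{E}))$ and $\tau(D)$ via the multiplicativity of torsion in a short exact sequence of chain complexes, together with the torsion of the associated long exact homology sequence $\mathcal{H}'$.

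For Case (1), where $\phi^M(h) \neq 1$: here $D$ is acyclic, since $h - 1$ is invertible in $F$ (being a unit, as $\phi^M(h) \neq 1$ and $F$ is a field, using that $\phi^M(h)$ is a root of unity or at least nonzero and $\ne 1$). A direct computation gives $\tau(D) = (\phi^M(h) - 1)^{\pm 1}$ up to sign, and the long exact sequence degenerates so that $H(C^{\phi}(\widehat{E})) \cong H(C^{\phi^M}(\widehat{M}))$; multiplicativity then yields $\tau^{\phi^M}(M) = \tau^{\phi}(E) \cdot (\phi^M(h) - 1)^{-1}$ up to sign and powers of $h$, and the normalizations built into $\textbf{e}^M$ and $\omega^M$ (this is exactly why the cell structure was chosen so the core is a subcomplex, and why the sign $(-1)^{1 + (b_1(E)+1)(b_1(M)+1)}$ appears in the definition of $\omega^M$) pin down the sign and the $H_1$-ambiguity. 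The bookkeeping of the $|C|$ sign terms and the homology-orientation sign is the fussy part, but it is exactly the content of \cite[Lemma 7.1.1]{Tu02}, so I would cite that computation and only indicate how our modified cell structure leaves it unchanged.

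For Case (2), where $\phi^M(h) = 1$: now the boundary maps in $D$ become $\partial \hat{e}^1_2 = 0$, $\partial \hat{e}^2_1 = \pm \hat{e}^1_2$, $\partial \hat{e}^3 = 0$, so $D$ has homology $H_0(D) = F\langle \hat{e}^0\rangle$, $H_1(D) = F\langle \hat{e}^1_1 \rangle$, $H_2(D) = F\langle \hat{e}^2_2 \rangle$, $H_3(D) = F\langle \hat{e}^3 \rangle$ — matching the homology of the torus $S^1 \times D^2 \simeq S^1$ shifted appropriately. Under the hypothesis that $C^{\phi^M}(\widehat{M})$ is acyclic, the long exact homology sequence of the pair forces the connecting maps to be isomorphisms, so $\partial_* [\hat{e}^2_2] = (\partial \hat{e}^2_2) \cap \widehat{E} \otimes 1$ and $\partial_* [\hat{e}^3] = (\partial \hat{e}^3) \cap \widehat{E} \otimes 1$ span $\bigoplus_i H_i(C^{\phi}(\widehat{E}))$ — this is the asserted basis. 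Multiplicativity of torsion in the short exact sequence then gives $\tau^{\phi^M}(M, \textbf{e}^M, \omega^M) = \pm \tau^{\phi}(E, \textbf{e}, \omega; \{(\partial \hat{e}^2_2) \cap \widehat{E} \otimes 1, (\partial \hat{e}^3) \cap \widehat{E} \otimes 1\}) \cdot \tau(D) \cdot \tau(\mathcal{H}')^{\pm 1}$, and one checks $\tau(D) = \pm 1$ with the stated orientations on $e^2_2$ and $e^3$ while $\tau(\mathcal{H}') = \pm 1$ because $\mathcal{H}'$ is built from the identity-type isomorphisms once $C^{\phi^M}(\widehat{M})$ is acyclic; again the sign is fixed by the definition of $\omega^M$. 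The main obstacle is purely bookkeeping: tracking all the sign contributions (the $(-1)^{|C|}$ factors, the permutation signs from concatenating bases across the short exact sequence, and the homology-orientation sign) and verifying that replacing Turaev's cell structure by ours — where the core circle is a subcomplex — does not alter any of them. I would handle this by reducing to \cite[Lemma 8.1.2]{Tu02} via an explicit cellular homotopy equivalence between the two cell structures on $S^1 \times D^2$ that restricts to the identity on a neighborhood of the boundary and respects the core, so that all torsion and orientation data are transported unchanged.
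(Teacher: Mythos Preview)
Your overall strategy --- use the short exact sequence $0 \to C^{\phi}(\widehat{E}) \to C^{\phi^M}(\widehat{M}) \to D \to 0$ and the multiplicativity of torsion, then track signs via $\omega^M$ and $\mathbf{e}^M$ --- is exactly the approach the paper takes (by citation to \cite{Tu02} here, and explicitly in the proof of Theorem~\ref{orbifold gluing}). Case~(1) is fine.

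In Case~(2), however, your computation of $D$ is wrong, and the error propagates. You assert $\partial \hat{e}^1_1 = 0$ in $D$, but the radial $1$-cell $e^1_1 = \mathbf{1}\times \mathrm{int}([0,1])$ has one endpoint at the interior $0$-cell $e^0=(1,0)$ and the other on $S^1\times\partial D^2\subset E$; hence in the quotient $D=C/C'$ one has $\partial(\hat{e}^1_1\otimes 1) = \hat{e}^0\otimes 1$, not $0$. (More generally, $D$ computes $H_*(S^1\times D^2,\,S^1\times\partial D^2;F)$ via excision, not $H_*(S^1\times D^2;F)$ as your ``matching the homology of $S^1$'' remark suggests.) With the correct boundary maps one finds $H_0(D)=H_1(D)=0$ and $H_2(D)=F\langle \hat{e}^2_2\otimes 1\rangle$, $H_3(D)=F\langle \hat{e}^3\otimes 1\rangle$, so the connecting homomorphisms in the long exact sequence yield a \emph{two}-dimensional basis for $\bigoplus_i H_i(C^{\phi}(\widehat{E}))$, as the statement requires. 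Your version would force this space to be four-dimensional, which is incompatible with the theorem and with the acyclicity of $C^{\phi^M}(\widehat{M})$ (the long exact sequence gives $H_0(D)=0$ whenever $H_0(C)=0$). Once you correct this, $\tau(D)=-1$ and $\tau(\mathcal{H}')=-1$ with the chosen bases, and the rest of your argument goes through exactly as in the paper's proof of Theorem~\ref{orbifold gluing}, Case~3.
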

 
 \begin{Remark}
In Case 2, $\tau ^{\phi} \Big( E, \textbf{e}, \omega; \{ \big( \partial ( \hat{e}^2_2) \cap \widehat{E} \big) \otimes 1, \big( \partial (\hat{e}^3) \cap \widehat{E} \big) \otimes 1 \} \Big)$ is $\tau_0$ times the torsion of $C^{\phi}(\widehat{E})$ with respect to the ordered basis $\{ \big( \partial ( \hat{e}^2_2) \cap \widehat{E} \big) \otimes 1, \big( \partial (\hat{e}^3) \cap \widehat{E} \big) \otimes 1 \}$ for $\bigoplus\limits_{i=0}^3 H_i (C^{\phi}(\widehat{E}))$, with $\tau_0$ defined as before. Also, we lose nothing by assuming $C^{\phi^M}(\widehat{M})$ is acyclic because if $C^{\phi^M}(\widehat{M})$ is not acyclic, then $\tau ^{\phi^M} (M, \textbf{e}^M, \omega^M) =0$.
\end{Remark}

\begin{Remark}
Orient $\textbf{1} \times \partial D^2 \subset S^1 \times D^2$. Let $\mu \in H_1(E)$ denote its induced homology class. Because $H_1(M) \cong H_1(E) / \langle \mu \rangle$, $\phi$ extends to $\phi^M$ when $\phi(\mu) =1$.
\end{Remark}
 
 \begin{Remark}
 Despite a different choice of cell structure on $S^1 \times D^2$, Theorem \ref{gluing lemmas} can be proved as in \cite{Tu02}.
 \end{Remark}
 
 \section{Orbifold Euler Structures}\label{orbifold Euler structures}
 
In this section we extend the notion of Euler structures to orbifolds. 

Let $Y$ be a compact, connected 3-orbifold with $\Sigma Y = L_1 \cup \ldots \cup L_k$. Centered around each $L_i$ is a neighborhood of the form $(S^1 \times D^2) / \mathbb{Z}_{\alpha_i}$, where $\mathbb{Z}_{\alpha_i}$ acts by rotations about the core. Equip each $(S^1 \times D^2) / \mathbb{Z}_{\alpha_i}$ with the cell decomposition of $S^1 \times D^2$ from Section \ref{torsion 3-manifolds}. In particular, each singular curve $L_i$ is given the cell decomposition consisting of a 0-cell $e^0_i = (1, 0)$ and a 1-cell $e^1_i = (S^1 - \textbf{1} ) \times \textbf{0}$. Then extend these cell decompositions to a cell decomposition of $|Y|$. Denote the set of cells away from $\Sigma Y$ by $\{e_j\}_{j \in J}$. 

The underlying space $|\widehat{Y}|$ of the orbifold cover $\widehat{Y}$ of $Y$ with deck group $H_1^{orb}(Y)$ inherits a cell decomposition. As in the regular case, $H_1^{orb}(Y)$ acts on the lifts of each cell in $|Y|$, but unlike the regular case, the action might not be free. For example, consider $(S^1 \times D^2) / \mathbb{Z}_\alpha$ with the above cell decomposition. $H_1^{orb}\big( (S^1 \times D^2) / \mathbb{Z}_\alpha \big) \cong \mathbb{Z} \times \mathbb{Z}_\alpha$. The $\mathbb{Z}_\alpha$ factor fixes the lifts of each cell in the singular curve $S^1 \times \textbf{0}$. More generally, for each $L_i$, the subgroup $\langle \mu_i \rangle$ of $H_1^{orb}(Y)$, generated by the meridian $\mu_i$ of $L_i$, fixes the lifts of each cell in $L_i$.

We define orbifold Euler structures on $Y$ in the following way. Instead of considering all possible lifts, as in the regular case, we restrict our attention to lifts that form a certain configuration over each singular curve. To formulate this precisely, first let $h_1, \ldots, h_k$ denote the homology classes in $H_1^{orb}(Y)$ induced by the oriented singular curves $L_1, \ldots, L_k$. If $\{\hat{e}_j\}_{j \in J} \cup \bigcup\limits_{i=1}^{k} \{\hat{e}^0_i, \hat{e}^1_i\}$ denotes a lift of the cells $\{e_j\}_{j \in J} \cup \bigcup\limits_{i=1}^{k} \{e^0_i, e^1_i\}$ in $|Y|$ to $|\widehat{Y}|$, then we require that $\partial (\hat{e}^1_i) = \pm (h_i-1)\hat{e}^0_i$ for every $i \in \{1, \ldots, k\}$. Given two such lifts $\hat{e} = \{\hat{e}_j\}_{j \in J} \cup \bigcup\limits_{i=1}^{k} \{\hat{e}^0_i, \hat{e}^1_i\}$ and $\hat{f} = \{\hat{f}_j\}_{j \in J} \cup \bigcup\limits_{i=1}^{k} \{\hat{f}^0_i, \hat{f}^1_i\}$, define 
\begin{equation}\label{equivrel}
\hat{f} / \hat{e} = \prod\limits_{j \in J} (\hat{f}_j / \hat{e}_j)^{(-1)^{\text{dim } e_j}},
\end{equation}
where $\hat{f}_j / \hat{e}_j$ is the unique element in $H_1^{orb}(Y)$ that takes $\hat{e}_j$ to  $\hat{f}_j$. Set $\hat{e} \sim \hat{f}$ when $\hat{f} / \hat{e} =1$. It is not hard to see that this gives an equivalence relation on the set of all such lifts. 

\begin{Remark}
We omit the product $\prod\limits_{i=1}^k( \hat{f}^0_i / \hat{e}^0_i) \cdot (\hat{f}^1_i / \hat{e}^1_i)^{-1}$ from the definition of $\hat{f} / \hat{e}$ because $\hat{f}^0_i / \hat{e}^0_i$ and $\hat{f}^1_i / \hat{e}^1_i$ may not be well-defined for some $i$. When $\hat{f}^0_i / \hat{e}^0_i$ and $\hat{f}^1_i / \hat{e}^1_i$ are well-defined, $(\hat{f}^0_i / \hat{e}^0_i) \cdot (\hat{f}^1_i / \hat{e}^1_i)^{-1} =1$ by definition of the configuration.
\end{Remark}

\begin{Def}\label{orbiEuler}
Let $Eul(Y)$ denote the set of equivalence classes. An \textit{orbifold Euler structure} $\textbf{e}$ on $Y$ is an element of $Eul(Y)$.
\end{Def}

\begin{Remark}
$Eul(Y)$ can be thought of as lifts to $|\widehat{Y}|$ of the cells away from $\Sigma Y$ modulo Relation \ref{equivrel} above. 
\end{Remark}

As in the regular case, we have the following:

\begin{Lem}
There is a free and transitive action of $H_1^{orb}(Y)$ on $Eul(Y)$: if $h \in H_1^{orb}(Y)$ and $\textbf{e} \in Eul(Y)$, then $h \cdot \textbf{e}$ is the orbifold Euler structure $\textbf{f} \in Eul(Y)$ with the property that $\hat{f} / \hat{e} = h$, for all representatives $\hat{e}$ of $\textbf{e}$ and $\hat{f}$ of $\textbf{f}$.
\end{Lem}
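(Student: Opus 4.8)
The plan is to mirror the classical argument (cf.\ the Remark preceding Definition \ref{orbiEuler}) while checking that the restriction to "configured" lifts causes no trouble. First I would define the action. Given $h \in H_1^{orb}(Y)$ and a representative $\hat{e} = \{\hat{e}_j\}_{j \in J} \cup \bigcup_{i=1}^k \{\hat{e}^0_i, \hat{e}^1_i\}$ of $\textbf{e}$, I translate the off-singular cells by a fixed sign convention, say $\hat{f}_j := h^{(-1)^{\text{dim } e_j}} \cdot \hat{e}_j$, and I keep the singular-curve cells unchanged: $\hat{f}^0_i := \hat{e}^0_i$, $\hat{f}^1_i := \hat{e}^1_i$. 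I must check this produces a valid configured lift, i.e.\ $\partial(\hat{f}^1_i) = \pm(h_i - 1)\hat{f}^0_i$; but since $\hat{f}^0_i = \hat{e}^0_i$ and $\hat{f}^1_i = \hat{e}^1_i$ this is exactly the condition $\hat{e}$ already satisfies, so it holds. Then $\hat{f}/\hat{e} = \prod_{j \in J} (\hat{f}_j / \hat{e}_j)^{(-1)^{\text{dim } e_j}} = \prod_{j \in J} \big(h^{(-1)^{\text{dim } e_j}}\big)^{(-1)^{\text{dim } e_j}} = \prod_{j \in J} h = h^{|J|}$, which is not quite $h$. So instead I fix a single cell $j_0 \in J$ (say a $0$-cell, which exists since $Y$ is connected and $J \neq \emptyset$) and translate only it: $\hat{f}_{j_0} := h^{(-1)^{\text{dim } e_{j_0}}} \hat{e}_{j_0}$, leaving all other $\hat{f}_j = \hat{e}_j$ and all $\hat{f}^0_i, \hat{f}^1_i$ unchanged. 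Then $\hat{f}/\hat{e} = h$ by construction.

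Next I would verify well-definedness: the class $h \cdot \textbf{e}$ should not depend on the representative $\hat{e}$ chosen, and the characterizing property "$\hat{f}/\hat{e} = h$ for \emph{all} representatives $\hat{e}$ of $\textbf{e}$ and $\hat{f}$ of $\textbf{f}$" should hold. Both follow from a cocycle-type identity: for any three configured lifts $\hat{e}, \hat{f}, \hat{g}$ one has $\hat{g}/\hat{e} = (\hat{g}/\hat{f})(\hat{f}/\hat{e})$, since $H_1^{orb}(Y)$ is abelian and $\hat{g}_j/\hat{e}_j = (\hat{g}_j/\hat{f}_j)(\hat{f}_j/\hat{e}_j)$ termwise (note the off-singular cells $\hat{e}_j$ have free $H_1^{orb}(Y)$-action, so these quotients are genuinely well-defined group elements, and the exponents $(-1)^{\dim e_j}$ distribute). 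Using this, if $\hat{e}, \hat{e}\,'$ are two representatives of $\textbf{e}$, so $\hat{e}\,'/\hat{e} = 1$, and $\hat{f}, \hat{f}\,'$ are the corresponding translates, a short computation with the identity gives $\hat{f}\,'/\hat{f} = (\hat{f}\,'/\hat{e}\,')(\hat{e}\,'/\hat{e})(\hat{e}/\hat{f}) = h \cdot 1 \cdot h^{-1} = 1$, so $[\hat{f}] = [\hat{f}\,']$; and for an arbitrary representative $\hat{g}$ of $h\cdot\textbf{e}$ one gets $\hat{g}/\hat{e} = (\hat{g}/\hat{f})(\hat{f}/\hat{e}) = 1 \cdot h = h$.

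Finally I would check the action axioms. That $1 \cdot \textbf{e} = \textbf{e}$ is immediate from the characterizing property. For $h_1 \cdot (h_2 \cdot \textbf{e}) = (h_1 h_2)\cdot \textbf{e}$: pick a representative $\hat{e}$ of $\textbf{e}$, let $\hat{f}$ represent $h_2 \cdot \textbf{e}$ and $\hat{g}$ represent $h_1 \cdot (h_2 \cdot \textbf{e})$; then $\hat{g}/\hat{e} = (\hat{g}/\hat{f})(\hat{f}/\hat{e}) = h_1 h_2$, which by the characterizing property means $[\hat{g}] = (h_1 h_2)\cdot\textbf{e}$. \textbf{Transitivity:} given $\textbf{e}, \textbf{f} \in Eul(Y)$ with representatives $\hat{e}, \hat{f}$, the element $h := \hat{f}/\hat{e} \in H_1^{orb}(Y)$ satisfies $h \cdot \textbf{e} = \textbf{f}$ by the characterizing property (here one checks $\hat f/\hat e$ is independent of the chosen representatives, again via the cocycle identity). \textbf{Freeness:} if $h \cdot \textbf{e} = \textbf{e}$ then picking a representative $\hat{e}$ and a representative $\hat{f}$ of $h\cdot\textbf{e} = \textbf{e}$ gives $h = \hat{f}/\hat{e} = 1$ since $\hat{f}$ and $\hat{e}$ both represent $\textbf{e}$.

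I expect the only genuinely delicate point to be bookkeeping around the singular cells: one must confirm throughout that omitting the factors $(\hat f^0_i/\hat e^0_i)(\hat f^1_i/\hat e^1_i)^{-1}$ from the definition of $\hat f/\hat e$ (as flagged in the Remark) is consistent — i.e.\ that the translate can always be realized without moving the $\hat e^0_i, \hat e^1_i$, and that the configuration condition $\partial(\hat e^1_i) = \pm(h_i-1)\hat e^0_i$ is preserved. Since our construction simply never touches those cells, this is automatic, so the proof reduces to the same formal manipulation as in the manifold case.
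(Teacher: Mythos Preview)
The paper states this lemma without proof, treating it as a routine adaptation of the manifold case (cf.\ the Remark after the definition of $Eul(M)$). Your argument fills in the details correctly: the essential observation is that $\hat f/\hat e$ is defined using only the off-singular cells, on which $H_1^{orb}(Y)$ acts freely, so the cocycle identity $\hat g/\hat e=(\hat g/\hat f)(\hat f/\hat e)$ holds exactly as in the classical setting, and the configuration constraint on the core cells can be left untouched when producing a representative of $h\cdot\textbf{e}$. The opening false start (translating every off-singular cell and getting $h^{|J|}$) is harmless but could be dropped; going directly to the single-cell translation is the clean construction, and your justification that $J$ contains a cell is fine (indeed $J$ always contains all the $3$-cells of $|Y|$).
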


\begin{Remark}
This induces an action of $H_1^{orb}(Y)$ on classes of lifts to $|\widehat{Y}|$ away from $\Sigma Y$. 
\end{Remark}

Orbifold Euler structures generalize regular Euler structures:

\begin{Thm}\label{orbiEulergen}
Suppose $H_1^{orb}(Y) \cong H_1(|Y|)$. Then we have a canonical bijection $Eul(Y) \leftrightarrow Eul(|Y|)$.
\end{Thm}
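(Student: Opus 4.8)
The plan is to construct a canonical map $\Psi\colon Eul(Y)\to Eul(|Y|)$ and then deduce that it is a bijection purely formally from the torsor structures. To begin, observe that the hypothesis $H_1^{orb}(Y)\cong H_1(|Y|)$ is really a statement about the natural surjection $q\colon H_1^{orb}(Y)=H_1(E)/\langle\mu_1^{\alpha_1},\dots,\mu_k^{\alpha_k}\rangle\to H_1(E)/\langle\mu_1,\dots,\mu_k\rangle=H_1(|Y|)$: since finitely generated abelian groups are Hopfian, an abstract isomorphism forces $q$ to be an isomorphism, equivalently each meridian class $\mu_i$ is trivial in $H_1^{orb}(Y)$. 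Because $Eul(Y)$ is a torsor over $H_1^{orb}(Y)$ and $Eul(|Y|)$ is a torsor over $H_1(|Y|)$, any map between them equivariant over $q$ is automatically bijective; so it suffices to produce $\Psi$ and check well-definedness and $q$-equivariance.

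The geometric input, which I expect to be the main obstacle, is a canonical cellular identification of $|\widehat Y|$ with the cover $\widehat{|Y|}$ of $|Y|$ with deck group $H_1(|Y|)$, intertwining the two deck actions via $q$. Over $E$ both covers restrict to the same connected cover $\overline E$ with deck group $H_1^{orb}(Y)=H_1(|Y|)$, so there the identification is forced. Over a singular neighborhood $N_i=(S^1\times D^2)/\mathbb Z_{\alpha_i}$ one uses that $\mu_i$ is trivial in $H_1^{orb}(Y)$: this says $\widehat Y$ does not unwrap the meridian disk of $N_i$, so extending the common cover of $\partial N_i$ across $N_i$ orbifold-wise produces, on underlying spaces, precisely the cyclic cover of $S^1\times D^2$ obtained by extending across the honest solid torus in $|Y|$; one then checks that the inherited cell structures and boundary maps agree. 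In particular, since the subgroup of $H_1^{orb}(Y)$ fixing the lifts of cells on $L_i$ (cf.\ the discussion preceding Definition \ref{orbiEuler}) is now trivial, $H_1^{orb}(Y)$ acts freely on all lifts, and for every lift $\hat e^0_i$ of $e^0_i$ there is a unique lift $\hat e^1_i$ of $e^1_i$ with $\partial\hat e^1_i=\pm(h_i-1)\hat e^0_i$.

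Granting the identification, $\Psi$ is defined as follows. Represent $\textbf{e}\in Eul(Y)$ by a lift $\{\hat e_j\}_{j\in J}$ of the cells away from $\Sigma Y$ (using the description of $Eul(Y)$ in the remark following Definition \ref{orbiEuler}); for each $i$ pick any lift $\hat e^0_i$ of $e^0_i$, with the resulting unique $\hat e^1_i$. Via the identification above, $\{\hat e_j\}_{j\in J}\cup\bigcup_i\{\hat e^0_i,\hat e^1_i\}$ is a lift of \emph{all} cells of $|Y|$ to $\widehat{|Y|}$, and $\Psi(\textbf{e})$ is declared to be its class in $Eul(|Y|)$. This is independent of the choices: replacing $\hat e^0_i$ by $g_i\hat e^0_i$ forces $\hat e^1_i\mapsto g_i\hat e^1_i$, so the product $\prod(\hat{f}/\hat{e})^{(-1)^{\dim}}$ over all cells of $|Y|$ defining the equivalence on $Eul(|Y|)$ changes by $\prod_i g_i\cdot g_i^{-1}=1$; and replacing the representative $\{\hat e_j\}$ changes that product by exactly the factor in Relation \ref{equivrel}, which is $1$ since both choices represent $\textbf{e}$. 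The same bookkeeping gives $\Psi(h\cdot\textbf{e})=q(h)\cdot\Psi(\textbf{e})$. Hence $\Psi$ is a $q$-equivariant map of torsors, so a bijection, and it was constructed without arbitrary choices, hence canonical. (When $\Sigma Y=\emptyset$ it reduces to the identity $Eul(Y)=Eul(|Y|)$.)
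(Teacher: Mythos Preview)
Your proposal is correct and follows essentially the same approach as the paper: identify $|\widehat Y|$ with $\widehat{|Y|}$, send a configured lift representing $\textbf{e}\in Eul(Y)$ to itself viewed as an ordinary lift in $\widehat{|Y|}$, check well-definedness via $(\hat f^0_i/\hat e^0_i)(\hat f^1_i/\hat e^1_i)^{-1}=1$, and conclude bijectivity from equivariance of torsors. You supply more detail than the paper on two points it treats tersely---the Hopfian argument showing that the canonical surjection $q$ is the isomorphism in question, and the explicit geometric identification of the covers over the singular neighborhoods---but the underlying map and the verification are the same.
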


\begin{proof}
$|\widehat{Y}|$ can be thought of as the regular cover $\widehat{|Y|}$ of $|Y|$ with deck group $H_1(|Y|)$, since $H_1^{orb}(Y) \cong H_1(|Y|)$. As a result, we can identify $Eul(Y)$ with the set $S'$ of lifts to $\widehat{|Y|}$ of the cells in $|Y|$ that form a certain configuration over each singular curve modulo Relation \ref{equivrel} above. Let $S$ denote the set of all lifts to $\widehat{|Y|}$ of the cells in $|Y|$. Let $I: S' \rightarrow S$ be the inclusion. We claim that $I$ induces a well-defined function $\overline{I}: Eul(Y) \rightarrow Eul(|Y|)$. Let $\{\hat{e}_j\}_{j \in J} \cup \bigcup\limits_{i=1}^{k} \{\hat{e}^0_i, \hat{e}^1_i\}$ and $\{\hat{f}_j\}_{j \in J} \cup \bigcup\limits_{i=1}^{k} \{\hat{f}^0_i, \hat{f}^1_i\}$ be lifts representing the same orbifold Euler structure on $Y$. Because $H_1^{orb}(Y) \cong H_1(|Y|)$, $\hat{f}^0_i / \hat{e}^0_i$ and $\hat{f}^1_i / \hat{e}^1_i$ are well-defined for every $i$. As noted above, the definition of the configuration guarantees that $(\hat{f}^0_i / \hat{e}^0_i) \cdot (\hat{f}^1_i / \hat{e}^1_i)^{-1} =1$ for every $i$. Hence $\prod\limits_{j \in J} (\hat{f}_j / \hat{e}_j)^{(-1)^{\text{dim } e_j}} \prod\limits_{i=1}^k( \hat{f}^0_i / \hat{e}^0_i) \cdot (\hat{f}^1_i / \hat{e}^1_i)^{-1} =1$, as needed. It is not hard to check that $\overline{I}$ is equivariant with respect to the free and transitive $H_1^{orb}(Y)$ and $H_1(|Y|)$ actions, hence $\overline{I}$ is a bijection.
\end{proof}

\section{Orbifold Turaev torsion invariants}\label{torsion 3-orbifolds}

In this section we extend the notion of Turaev torsion to orbifolds.

As in Section \ref{orbifold Euler structures}, $Y$ denotes a compact, connected 3-orbifold with $\Sigma Y = L_1 \cup \ldots \cup L_k$. Centered around each $L_i$ is a neighborhood of the form $(S^1 \times D^2) / \mathbb{Z}_{\alpha_i}$. Fix a cell decomposition on $|Y|$ that restricts to the preferred cell decomposition from Section \ref{orbifold Euler structures} on each neighborhood $(S^1 \times D^2) / \mathbb{Z}_{\alpha_i}$. This lifts to a cell decomposition of the underlying space $|\widehat{Y}|$ of the orbifold cover $\widehat{Y}$. 

Let $\textbf{e}$ be an orbifold Euler structure on $Y$, and let $\omega$ be a homology orientation on $|Y|$. Our definition of $\tau(Y, \textbf{e}, \omega)$ follows the regular construction with one difference: we have to be careful about how we order the cells in $|Y|$.

First, let $C(|\widehat{Y}|)$ denote the cellular chain complex of $|\widehat{Y}|$ with $\mathbb{Z}$ coefficients. The action of $H_1^{orb}(Y)$ on the lifts of each cell in $|Y|$ gives $C(|\widehat{Y}|)$ the structure of a $\mathbb{Z}[H_1^{orb}(Y)]$-chain complex. Note that the $\mathbb{Z}[H_1^{orb}(Y)]$-modules $C_0(|\widehat{Y}|)$ and $C_1(|\widehat{Y}|)$ may not be free because $H_1^{orb}(Y)$ may not act freely on the cells over the singular curves. 

Next, decompose $Q(\mathbb{Z}[H_1^{orb}(Y)])$ as a direct sum $\bigoplus\limits_{l=1}^r F_l$ of fields $F_l$, indexed by equivalence classes of characters of $Tor(H_1^{orb}(Y))$. For each $l$, we have the composition $\phi_l: \mathbb{Z}[H_1^{orb}(Y)] \rightarrow F_l$ gotten by starting with the inclusion $I: \mathbb{Z}[H_1^{orb}(Y)] \hookrightarrow Q(\mathbb{Z}[H_1^{orb}(Y)])$, applying the splitting, and then taking the projection to $F_l$. For each $l$, form the twisted chain complex $C^{\phi_l}(|\widehat{Y}|)= {\phi_l} C(|\widehat{Y}|) \otimes F_l$ over $F_l$. Note that the $F_l$-vector spaces $C^{\phi_l}_0(|\widehat{Y}|)$ and $C^{\phi_l}_1(|\widehat{Y}|)$ may have smaller than expected dimensions because $C_0(|\widehat{Y}|)$ and $C_1(|\widehat{Y}|)$ may not be free. Specifically, we have the following:

\begin{Observation}\label{Core Observation}
Fix $i \in \{1, \ldots, k\}$. Fix $\alpha \in \{0,1\}$. Let $\hat{e}^{\alpha}_i$ denote a lift of the $\alpha$-cell $e^{\alpha}_i$ in $L_i$ to $|\widehat{Y}|$. If $\phi_l(\mu_i ) \neq 1$, then $\hat{e}^{\alpha}_i \otimes 1 =0$ in $C^{\phi_l}_{\alpha}(|\widehat{Y}|)$.
\end{Observation}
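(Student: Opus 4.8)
The plan is to unwind the definitions of the twisted chain group and use the fact that $\langle \mu_i \rangle$ acts on $\hat{e}^\alpha_i$ trivially, while $\phi_l$ sends $\mu_i$ to something $\neq 1$.

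First I would recall the structure of $C_\alpha(|\widehat Y|)$ near $L_i$. The $\alpha$-cell $e^\alpha_i$ in $L_i$ lifts to a set of cells in $|\widehat Y|$ indexed by the coset space $H_1^{orb}(Y)/\langle\mu_i\rangle$: indeed, as noted in Section~\ref{orbifold Euler structures}, the subgroup $\langle\mu_i\rangle$ fixes each lift of a cell in $L_i$ (this is the non-free part of the action), and the rest of $H_1^{orb}(Y)$ permutes them freely. Hence the $\mathbb{Z}[H_1^{orb}(Y)]$-submodule of $C_\alpha(|\widehat Y|)$ generated by $\hat e^\alpha_i$ is isomorphic to $\mathbb{Z}[H_1^{orb}(Y)/\langle\mu_i\rangle] = \mathbb{Z}[H_1^{orb}(Y)] \otimes_{\mathbb{Z}[\langle\mu_i\rangle]} \mathbb{Z}$, equivalently the quotient $\mathbb{Z}[H_1^{orb}(Y)]/(\mu_i - 1)$, with $\hat e^\alpha_i \mapsto 1$. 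Concretely, in $C_\alpha(|\widehat Y|)$ we have the relation $\mu_i \cdot \hat e^\alpha_i = \hat e^\alpha_i$, i.e. $(\mu_i - 1)\hat e^\alpha_i = 0$.

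Next I would push this relation through the tensor product defining $C^{\phi_l}_\alpha(|\widehat Y|) = {}_{\phi_l}C_\alpha(|\widehat Y|) \otimes F_l$. Applying $\phi_l \otimes \mathrm{id}$ to the relation $(\mu_i-1)\hat e^\alpha_i = 0$ gives $(\phi_l(\mu_i) - 1)(\hat e^\alpha_i \otimes 1) = 0$ in $C^{\phi_l}_\alpha(|\widehat Y|)$. Since $\phi_l(\mu_i) \neq 1$ and $\hat e^\alpha_i\otimes 1$ lives in an $F_l$-vector space, the scalar $\phi_l(\mu_i) - 1$ is a nonzero element of the field $F_l$, hence invertible, and therefore $\hat e^\alpha_i \otimes 1 = 0$. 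That is the whole argument.

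The only real point requiring care — and the step I expect to be the main obstacle — is justifying precisely that the stabilizer of $\hat e^\alpha_i$ under the $H_1^{orb}(Y)$-action is exactly $\langle\mu_i\rangle$ (or at least contains $\mu_i$), so that the relation $(\mu_i-1)\hat e^\alpha_i = 0$ genuinely holds in $C_\alpha(|\widehat Y|)$ rather than merely a relation involving a power of $\mu_i$. This follows from the explicit construction of $\widehat Y$ recalled in Section~\ref{3-orbifolds}: over the neighborhood $(S^1\times D^2)/\mathbb{Z}_{\alpha_i}$ the cover restricts to the canonical extension of the $H_1^{orb}$-cover of $E$, and on the core curve the relevant local picture is $\big((S^1\times D^2)/\mathbb{Z}_{\alpha_i}\big)^{\widehat{\ }}$, whose deck group $\mathbb{Z}\times\mathbb{Z}_{\alpha_i}$ has the $\mathbb{Z}_{\alpha_i}=\langle\mu_i\rangle$ factor acting by rotations, fixing the core setwise and fixing each of its cells. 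I would state this stabilizer computation as the key lemma-step and then the vanishing is immediate. One should also note that it suffices to know $\mu_i$ itself stabilizes $\hat e^\alpha_i$; one does not need the stabilizer to be \emph{exactly} $\langle\mu_i\rangle$ for the conclusion of the Observation.
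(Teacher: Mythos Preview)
Your proposal is correct and follows essentially the same approach as the paper's one-line proof: both use that $\mu_i$ fixes $\hat e^\alpha_i$ (so $(\mu_i-1)\hat e^\alpha_i=0$), push this through the tensor product to get $(\phi_l(\mu_i)-1)(\hat e^\alpha_i\otimes 1)=0$, and then invert the nonzero scalar $\phi_l(\mu_i)-1$ in the field $F_l$. Your extended discussion of the stabilizer is unnecessary since the paper already records in Section~\ref{orbifold Euler structures} that $\langle\mu_i\rangle$ fixes the lifts of each cell in $L_i$, but your caution there is not wrong.
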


\begin{proof}
$\hat{e}^{\alpha}_i \otimes 1 = \hat{e}^{\alpha}_i \otimes \phi_l (\mu_i - 1) \cdot (\phi_l (\mu_i -1))^{-1}= (\mu_i - 1) \cdot \hat{e}^{\alpha}_i \otimes (\phi_l (\mu_i -1))^{-1} = 0 \otimes (\phi_l (\mu_i -1))^{-1}  = 0.$
\end{proof}

Now order and orient the cells in $|Y|$. Then pick a lift in $|\widehat{Y}|$ that represents the orbifold Euler structure $\textbf{e}$. Each cell in the lift inherits an orientation, and cells of the same dimension inherit an ordering. Then for every $l$, the $F_l$-chain complex $C^{\phi_l}(|\widehat{Y}|)$ is based. For every $n \in \{0, 1, 2, 3\}$ pick an ordered basis $\omega_n$ for the $\mathbb{R}$-vector space $H_n(|Y|, \mathbb{R})$ so that the sequence $\{\omega_n \}_{n=0}^{3}$ realizes the homology orientation $\omega$. Our choices of orientation, order, and $\omega_n$'s base and homology base the cellular chain complex $C(|Y|, \mathbb{R})$ of $|Y|$ over $\mathbb{R}$, and allow us to compute the torsion $\tau(C(|Y|, \mathbb{R}))$ of $C(|Y|, \mathbb{R})$. Let $\tau_0$ denote the sign of $\tau(C(|Y|, \mathbb{R}))$. If $C^{\phi_l}(|\widehat{Y}|)$ is acyclic, set $\tau^{\phi_l}(Y, \textbf{e}, \omega)= \tau_0 \cdot \tau(C^{\phi_l}(|\widehat{Y}|))$. Otherwise, set $\tau^{\phi_l}(Y, \textbf{e}, \omega)=0$. 

\begin{Thm} \label{Thm: orbifold invariant}
$\tau^{\phi_l}(Y, \textbf{e}, \omega)$ does not depend on the orientation and order of the cells away from $\Sigma Y$, the lift in $|\widehat{Y}|$ representing $\textbf{e}$, or on the sequence $\{\omega_n \}_{n=0}^{3}$ of bases realizing $\omega$.
\end{Thm}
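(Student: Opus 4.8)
The plan is to reduce the invariance claim to the corresponding fact for the ordinary Turaev torsion of the underlying 3-manifold $|Y|$ (the Theorem attributed to Turaev \cite{Tu86, Tu90} in Section \ref{torsion 3-manifolds}), checking carefully that the orbifold-specific constraints — the configuration condition $\partial(\hat e^1_i) = \pm(h_i-1)\hat e^0_i$ over each singular curve, and the fact that $C_0(|\widehat Y|)$, $C_1(|\widehat Y|)$ need not be free — do not spoil anything. I would organize the proof by isolating the three choices one at a time, since changing one while fixing the others is exactly the situation handled in the regular case.

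First I would handle the choice of representing lift. Two lifts representing the same orbifold Euler structure $\textbf{e}$ differ, over the cells $\{e_j\}_{j\in J}$ away from $\Sigma Y$, by elements $g_j := \hat f_j/\hat e_j \in H_1^{orb}(Y)$ with $\prod_{j}g_j^{(-1)^{\dim e_j}}=1$ by Relation \ref{equivrel}; over each singular cell $e^\alpha_i$ the two lifts either differ by a well-defined element (when $\phi_l(\mu_i)=1$, where again the configuration forces the net contribution to be $1$) or, when $\phi_l(\mu_i)\neq1$, the basis element $\hat e^\alpha_i\otimes 1$ is simply $0$ in $C^{\phi_l}_\alpha(|\widehat Y|)$ by Observation \ref{Core Observation}, so it contributes nothing to the based complex at all. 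Thus the change-of-basis matrix relating the two based complexes $C^{\phi_l}(|\widehat Y|)$ is block-diagonal with diagonal entries $\phi_l(g_j)$ on the nonzero cells, and the standard multiplicativity computation (as in \cite[Chapter 1]{Tu02}) shows $\tau$ is multiplied by $\prod_j \phi_l(g_j)^{(-1)^{\dim e_j}} = \phi_l\big(\prod_j g_j^{(-1)^{\dim e_j}}\big)=\phi_l(1)=1$. The sign $\tau_0$ is unaffected since it is computed from $C(|Y|,\mathbb R)$, which does not see the lift.

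Next, the order and orientation of the cells away from $\Sigma Y$. Reordering cells of a fixed dimension multiplies each relevant change-of-basis determinant by the sign of the permutation, and these signs cancel in the alternating product defining $\tau(C)$ together with the $(-1)^{|C|}$ prefactor — this is exactly the classical argument that $\tau$ is a simple-homotopy / basis invariant up to the prescribed ambiguities, and it applies verbatim to $C^{\phi_l}(|\widehat Y|)$ because the only cells one is permitted to reorder are those that survive to honest basis vectors. Crucially the singular cells $e^0_i,e^1_i$ are \emph{not} reordered or reoriented (their cell structure is fixed), so no issue arises from their possibly-trivial image. Reorienting a cell $e_j$ changes the corresponding determinant by $-1$ and simultaneously flips $\tau_0$, so the product $\tau_0\cdot\tau(C^{\phi_l}(|\widehat Y|))$ is unchanged; I would spell this out exactly as in the regular case. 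Finally, independence of the sequence $\{\omega_n\}_{n=0}^3$ realizing $\omega$: changing the bases $\omega_n$ within the given orientation class of $\bigoplus_n H_n(|Y|,\mathbb R)$ changes $\tau(C(|Y|,\mathbb R))$ by a positive real number (the product of the determinants of the base-change matrices, which is positive precisely because $\omega$ is preserved), hence leaves $\tau_0$ unchanged; and $\{\omega_n\}$ enters nowhere else in the definition. When $C^{\phi_l}(|\widehat Y|)$ is not acyclic all of this is vacuous since $\tau^{\phi_l}=0$ by fiat.

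The main obstacle — and the only place the orbifold setting genuinely departs from \cite{Tu02} — is bookkeeping around the non-free cells over $\Sigma Y$: one must check that the preferred cell structure plus the configuration constraint make the ``missing'' or ``collapsed'' basis vectors behave coherently, i.e.\ that $C^{\phi_l}_0$ and $C^{\phi_l}_1$ are honestly based $F_l$-vector spaces with bases indexed by $\{e_j\}$ together with exactly those $e^\alpha_i$ for which $\phi_l(\mu_i)=1$, and that this indexing is stable under the moves above. Once that combinatorial dictionary is set up (using Observation \ref{Core Observation} and the Remark following Relation \ref{equivrel}), each of the three invariance statements is the classical multiplicativity-of-torsion computation applied to a based complex, so I expect the write-up to be short modulo that setup.
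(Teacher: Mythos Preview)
Your proposal is correct and matches the paper's approach closely. The paper isolates your ``combinatorial dictionary'' step as a separate lemma (Lemma~\ref{Lem: orbifold invariant}), proving explicitly that $\{\hat e_s\otimes 1\}_{s\in S_\alpha}\cup\{\hat e^\alpha_i\otimes 1\}_{i\in I'}$ with $I'=\{i:\phi_l(\mu_i)=1\}$ is a basis for $C^{\phi_l}_\alpha(|\widehat Y|)$, and then runs exactly your diagonal change-of-basis computation via Lemma~\ref{Lem: chaingluing}; one small sharpening is that for $i\in I'$ the quotient $\hat f^\alpha_i/\hat e^\alpha_i$ is only defined up to powers of $\mu_i$, so it is $\phi_l(\hat f^\alpha_i/\hat e^\alpha_i)$ that is well-defined rather than the group element itself.
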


\begin{Remark}
The orientation and order of the singular curves in $\Sigma Y$ induce a natural orientation and order of the cells in $\Sigma Y$. Thus it suffices to focus on the orientation and order of the cells away from $\Sigma Y$.
\end{Remark}

The proof of Theorem \ref{Thm: orbifold invariant} makes use of the following:

\begin{Lem}[\cite{Tu01}]\label{Lem: chaingluing}
Let $C = (\textbf{0} \rightarrow C_m \rightarrow \ldots \rightarrow C_0 \rightarrow \textbf{0})$ be an acyclic chain complex of finite-dimensional vector spaces $C_i$ over a field $F$. If $C$ is based by $\{c_i\}$ and $\{d_i\}$, then $\tau(C, \{d_i\}) = \tau(C, \{c_i\}) \cdot \prod\limits_{i=0}^m [c_i / d_i]^{(-1)^{i+1}}$.
\end{Lem}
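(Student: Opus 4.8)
The plan is to exploit the fact recorded in the Remark immediately after the definition of $\tau(C)$: for a fixed based complex $C$, the torsion is independent of the auxiliary choice of the sequences $b_i$ (and in the acyclic case there are no classes $h_i$ to choose at all). So I would fix once and for all a single system $\{b_i\}$ with the property that $\partial_{i-1}(b_i)$ is an ordered basis of $\mathrm{Im}(\partial_{i-1})$ for every $i$ --- this condition refers only to the subspace $\mathrm{Im}(\partial_{i-1})\subset C_{i-1}$ and is insensitive to whether $C_i$ is based by $\{c_i\}$ or $\{d_i\}$ --- and then use this \emph{same} system to evaluate both $\tau(C,\{c_i\})$ and $\tau(C,\{d_i\})$. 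Acyclicity gives $\mathrm{Im}(\partial_i)=\mathrm{Ker}(\partial_{i-1})$, so for each $i$ the concatenation $\partial_i(b_{i+1})\,b_i$ is an ordered basis of $C_i$; write $a_i$ for it. Since $|C|=0$ and $H(C)=0$ in the acyclic case, the simplified formula gives
\[
\tau(C,\{c_i\}) = \prod_{i=0}^m [a_i/c_i]^{(-1)^{i+1}}, \qquad \tau(C,\{d_i\}) = \prod_{i=0}^m [a_i/d_i]^{(-1)^{i+1}},
\]
with the \emph{same} ordered bases $a_i$ appearing in both products.

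Next I would invoke multiplicativity of change-of-basis determinants: for any three ordered bases $u,v,w$ of a finite-dimensional vector space, the change-of-basis matrix from $w$ to $u$ is the composite of the change-of-basis matrix from $v$ to $u$ with the change-of-basis matrix from $w$ to $v$, so taking determinants yields $[u/w]=[u/v]\cdot[v/w]$. Applying this with $u=a_i$, $v=c_i$, $w=d_i$ gives $[a_i/d_i]=[a_i/c_i]\cdot[c_i/d_i]$ for every $i$.

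Substituting into the formula for $\tau(C,\{d_i\})$ and separating the product,
\[
\tau(C,\{d_i\}) = \prod_{i=0}^m \big([a_i/c_i]\cdot[c_i/d_i]\big)^{(-1)^{i+1}} = \left(\prod_{i=0}^m [a_i/c_i]^{(-1)^{i+1}}\right)\left(\prod_{i=0}^m [c_i/d_i]^{(-1)^{i+1}}\right) = \tau(C,\{c_i\})\cdot\prod_{i=0}^m [c_i/d_i]^{(-1)^{i+1}},
\]
which is exactly the claimed identity.

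The argument is essentially bookkeeping, and the only point requiring any care is the first one: justifying that the same auxiliary system $\{b_i\}$ may legitimately be used in both torsion computations. This is precisely what the cited Remark supplies (independence of $\tau$ from the choice of the $b_i$), so the "main obstacle" is to state this cleanly rather than to surmount a genuine difficulty. One should also be consistent about the direction convention in the notation $[\,\cdot\,/\,\cdot\,]$: with the paper's convention that $[u/v]$ is the determinant of the change-of-basis matrix from $v$ to $u$, the identity $[u/w]=[u/v]\cdot[v/w]$ follows immediately from $\det$ being multiplicative on composed change-of-basis matrices, and the factors are applied in the right order throughout.
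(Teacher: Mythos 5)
Your argument is correct: fixing one auxiliary system $\{b_i\}$ (legitimate by the independence of $\tau$ from that choice, and the condition on $b_i$ indeed involves only $\mathrm{Im}(\partial_{i-1})$, not the preferred bases), using acyclicity to get the common bases $a_i=\partial_i(b_{i+1})\,b_i$ with $|C|=0$, and then applying multiplicativity of change-of-basis determinants gives exactly the claimed identity. The paper itself offers no proof of this lemma, citing \cite{Tu01}, and your argument is precisely the standard one found there, so nothing further is needed.
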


\begin{Lem}\label{Lem: orbifold invariant}
Let $\{\hat{e}_j\}_{j \in J} \cup \bigcup\limits_{i=1}^{k} \{\hat{e}^0_i, \hat{e}^1_i\}$ be any lift of the cells $\{e_j\}_{j \in J} \cup \bigcup\limits_{i=1}^{k} \{e^0_i, e^1_i\}$ in $|Y|$ to $|\widehat{Y}|$. Fix $\alpha \in \{0, 1, 2, 3\}$. Let $\{\hat{e}_s\}_{s \in S_{\alpha}}$ denote the set of $\alpha$-cells in $\{\hat{e}_j\}_{j \in J}$. Let $I' = \{1, \ldots, k \mid \phi_l( \mu_i ) =1\}$. If $\alpha \in \{0, 1\}$, then $\{\hat{e}_s \otimes 1\}_{s \in S_{\alpha}} \cup \{\hat{e}^{\alpha}_i \otimes 1 \}_{i \in I'}$ is a basis for $C^{\phi_l}_{\alpha}(|\widehat{Y}|)$. If $\alpha \in \{2, 3\}$, then $\{\hat{e}_s \otimes 1\}_{s \in S_{\alpha}}$ is a basis for $C^{\phi_l}_{\alpha}(|\widehat{Y}|)$.
\end{Lem}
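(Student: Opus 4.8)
The plan is to read off the $\mathbb{Z}[H_1^{orb}(Y)]$-module structure of each cellular chain group $C_\alpha(|\widehat Y|)$ and then track which summands survive after applying $-\otimes_{\phi_l}F_l$. Write $G=H_1^{orb}(Y)$. Since $C_\alpha(|\widehat Y|)$ is free abelian on the $\alpha$-cells of $|\widehat Y|$, and these are exactly the lifts of the $\alpha$-cells of $|Y|$, it splits $G$-equivariantly as a direct sum with one summand for each $\alpha$-cell $e$ of $|Y|$, namely the free abelian group on the ($G$-invariant) set of lifts of $e$. So it suffices to identify each such summand as a $\mathbb{Z}[G]$-module and tensor it down over $\phi_l$.

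First I would treat a cell $e_j$ ($j\in J$) lying away from $\Sigma Y$. Over (the preimage of) its open cell the map $p\colon |\widehat Y|\to|Y|$ restricts to a genuine regular covering with deck group $G$; since an open cell is contractible and $|\widehat Y|$ is connected, a standard argument shows $G$ acts freely and transitively on the set of lifts of $e_j$. Hence, using the chosen lift $\hat e_j$, this summand is $\cong\mathbb{Z}[G]$ with $\hat e_j\leftrightarrow 1$, and $\mathbb{Z}[G]\otimes_{\phi_l}F_l\cong F_l$ with basis $\hat e_j\otimes 1$. Next I would treat a cell $e^\alpha_i$ in the singular curve $L_i$ (so $\alpha\in\{0,1\}$). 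Here I would invoke the construction of $\widehat Y$ (cf. \cite[Chapter 2.2.2]{BMP}): over the singular solid torus $(S^1\times D^2)/\mathbb{Z}_{\alpha_i}$ the cover $\widehat Y$ is modeled on copies of $\mathbb{R}\times D^2\to(S^1\times D^2)/\mathbb{Z}_{\alpha_i}$, on which the local group $\mathbb{Z}_{\alpha_i}=\langle\mu_i\rangle$ fixes the core pointwise and nothing larger stabilizes a lift of a core cell; combined with the containment $\langle\mu_i\rangle\subseteq\mathrm{Stab}(\hat e^\alpha_i)$ already recorded in the text, this gives that $G$ acts transitively on the lifts of $e^\alpha_i$ with stabilizer exactly $\langle\mu_i\rangle$. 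Thus this summand is $\cong\mathbb{Z}[G/\langle\mu_i\rangle]\cong\mathbb{Z}[G]/(\mu_i-1)\mathbb{Z}[G]$ with $\hat e^\alpha_i\leftrightarrow 1$, and tensoring down gives $\mathbb{Z}[G]/(\mu_i-1)\mathbb{Z}[G]\otimes_{\phi_l}F_l\cong F_l/(\phi_l(\mu_i)-1)F_l$, which equals $F_l$ with basis $\hat e^\alpha_i\otimes 1$ when $\phi_l(\mu_i)=1$ (i.e. $i\in I'$) and equals $0$ when $\phi_l(\mu_i)\neq 1$, since then $\phi_l(\mu_i)-1$ is a unit of the field $F_l$ — this last vanishing is precisely Observation \ref{Core Observation}.

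Assembling the summands then finishes the proof: for $\alpha\in\{0,1\}$ one gets $C^{\phi_l}_\alpha(|\widehat Y|)=\bigoplus_{s\in S_\alpha}F_l(\hat e_s\otimes 1)\oplus\bigoplus_{i\in I'}F_l(\hat e^\alpha_i\otimes 1)$, so the indicated set is a basis; and for $\alpha\in\{2,3\}$, since $\Sigma Y$ is a $1$-dimensional subcomplex, every $\alpha$-cell of $|Y|$ lies away from $\Sigma Y$, so only free summands occur and $\{\hat e_s\otimes 1\}_{s\in S_\alpha}$ is a basis. The one genuinely delicate point I expect is establishing that the stabilizer of a singular lift is \emph{exactly} $\langle\mu_i\rangle$ rather than larger: this is what guarantees that each surviving singular summand is one-dimensional, hence actually contributes the basis vector $\hat e^\alpha_i\otimes 1$ instead of collapsing, and it is the only place where the fine structure of $\widehat Y$, beyond its deck group, is used; everything else is routine bookkeeping about tensoring a direct sum of permutation modules down to a field.
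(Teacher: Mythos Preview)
Your proof is correct and takes a more structural route than the paper's. The paper argues bare-hands: for $\alpha\in\{2,3\}$ it defers to the manifold case, and for $\alpha\in\{0,1\}$ it observes that generation is immediate from Observation~\ref{Core Observation} and then proves linear independence by building, for each index $s_0$ (and analogously each $i_0\in I'$), a $\mathbb{Z}[H_1^{orb}(Y)]$-balanced map $\psi_{s_0}\colon C'_\alpha(|\widehat Y|)\times F_l\to F_l$, $(\sum r_s\hat e_s+\sum r_i\hat e^\alpha_i,\,f)\mapsto\phi_l(r_{s_0})f$, which descends to a linear functional on the tensor product picking out the coefficient of $\hat e_{s_0}\otimes 1$. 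You instead identify each $G$-orbit summand of $C_\alpha(|\widehat Y|)$ as a permutation module $\mathbb{Z}[G/\mathrm{Stab}]$ and compute the tensor product directly. Both arguments in fact rest on the point you flag as delicate---that $\mathrm{Stab}(\hat e^\alpha_i)$ is \emph{exactly} $\langle\mu_i\rangle$---since the paper's functional $\psi_{i_0}$ is well-defined only if $\phi_l$ annihilates $\mathrm{Ann}(\hat e^\alpha_{i_0})$, which is the augmentation ideal of the stabilizer; for $i_0\in I'$ this needs the stabilizer to be no larger than $\langle\mu_{i_0}\rangle$. Your version makes this dependence explicit and packages the whole computation as a single module-theoretic step; the paper's version avoids naming the module structure and instead exhibits what is effectively the dual basis.
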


\begin{proof}
The argument for $\alpha \in \{2, 3\}$ is similar to the one in the regular case because $\Sigma Y$ doesn't contain any 2-cells or 3-cells. Let $\alpha \in \{0, 1\}$. Given Observation \ref{Core Observation}, it is clear $\{\hat{e}_s \otimes 1, \hat{e}^{\alpha}_i \otimes 1 \}_{s \in S_{\alpha}, i \in I'}$  generate $C^{\phi_l}_{\alpha}(|\widehat{Y}|)$, so we will focus on linear independence. Suppose $\sum\limits_{s \in S_{\alpha}} q_s \cdot (\hat{e}_s \otimes 1) + \sum\limits_{i \in I'} q_i \cdot (\hat{e}^{\alpha}_i \otimes 1) =0$ for some $q_s, q_i \in F_l$. Fix $s_0 \in S_{\alpha}, i_0 \in I'$. We need to show $q_{s_0}, q_{i_0} = 0$. We show it for $q_{s_0}$, and the other case is similar. Let $C_{\alpha}'(|\widehat{Y}|)$ be the $\mathbb{Z}[H_1^{orb}(Y)]$-submodule of $C_{\alpha}(|\widehat{Y}|)$ generated by $\{\hat{e}_s\}_{s \in S_{\alpha}} \cup \{\hat{e}^{\alpha}_i\}_{i \in I'}$. Consider the well-defined function $\psi_{s_0} : C_{\alpha}'(|\widehat{Y}|) \times F_l \rightarrow F_l$ given by $(\sum\limits_{s \in S_{\alpha}} r_s \cdot \hat{e}_s + \sum\limits_{i \in I'} r_i \cdot \hat{e}^{\alpha}_i, f) \mapsto \phi_l(r_{s_0})f$. It is not hard to see that $\psi_{s_0}$ is $\mathbb{Z}[H_1^{orb}(Y)]$-balanced, and so $\psi_{s_0}$ extends to a $F_l$-linear map $\Psi_{s_0} : C_{\alpha}'(|\widehat{Y}|) \otimes F_l \rightarrow F_l$. Note that $\Psi_{s_0} (\hat{e}_{s_0} \otimes 1) = 1$, $\Psi_{s_0} (\hat{e}_{s} \otimes 1) = 0$ for $s \neq s_0$, and $\Psi_{s_0} (\hat{e}^{\alpha}_i \otimes 1) = 0$ for $i \in I'$. If we apply $\Psi_{s_0}$ to both sides of $\sum\limits_{s \in S_{\alpha}} q_s \cdot (\hat{e}_s \otimes 1) + \sum\limits_{i \in I'} q_i \cdot (\hat{e}^{\alpha}_i \otimes 1) =0$, then we get that $q_{s_0} =0$, as needed.

 \end{proof}

\begin{proof}[Proof of Theorem \ref{Thm: orbifold invariant}]
Assume $C^{\phi_l}(|\widehat{Y}|)$ is acyclic; otherwise there is nothing to prove. Let 
\[ 
\hat{e} = \{\hat{e}_j\}_{j \in J} \cup \bigcup\limits_{i=1}^{k} \{\hat{e}^0_i, \hat{e}^1_i\}
\]
and
\[ 
\hat{f} = \{\hat{f}_j\}_{j \in J} \cup \bigcup\limits_{i=1}^{k} \{\hat{f}^0_i, \hat{f}^1_i\} 
\]
be representatives of $\textbf{e}$. Let $\{\hat{e}_s\}_{s \in S_{\alpha}}$ denote the set of $\alpha$-cells in $\{\hat{e}_j\}_{j \in J}$, and let $\{\hat{f}_s\}_{s \in S_{\alpha}}$ denote the set of $\alpha$-cells in $\{\hat{f}_j\}_{j \in J}$. From Lemma \ref{Lem: orbifold invariant}, we have that $ \hat{e}_{\alpha} \otimes 1  = \{\hat{e}_s \otimes 1\}_{s \in S_{\alpha}} \cup \{\hat{e}^{\alpha}_i \otimes 1 \}_{i \in I'}$ and $\hat{f}_{\alpha} \otimes 1  = \{\hat{f}_s \otimes 1\}_{s \in S_{\alpha}} \cup \{\hat{f}^{\alpha}_i \otimes 1 \}_{i \in I'}$ are bases for $C^{\phi_l}_{\alpha}(|\widehat{Y}|)$ when $\alpha \in \{0, 1\}$, and that $\hat{e}_{\alpha} \otimes 1  = \{\hat{e}_s \otimes 1\}_{s \in S_{\alpha}}$ and $\hat{f}_{\alpha} \otimes 1  = \{\hat{f}_s \otimes 1\}_{s \in S_{\alpha}}$ are bases for $C^{\phi_l}_{\alpha}(|\widehat{Y}|)$ when $\alpha \in \{2, 3\}$. When $\alpha \in \{2, 3\}$, the matrix that takes $\hat{e}_{\alpha} \otimes 1$ to $\hat{f}_{\alpha} \otimes 1$ is diagonal with determinant 
\[
[\hat{f}_{\alpha} \otimes 1 / \hat{e}_{\alpha} \otimes 1] = \prod\limits_{s \in S_{\alpha}} \phi_l (\hat{f}_s / \hat{e}_s).
\]
Recall that $\hat{f}_s / \hat{e}_s$ is the unique element in $H_1^{orb}(Y)$ that takes $\hat{e}_s$ to $\hat{f}_s$. When $\alpha \in \{0, 1\}$, the matrix that takes $\hat{e}_{\alpha} \otimes 1$ to $\hat{f}_{\alpha} \otimes 1$ is diagonal with determinant 
\[
[\hat{f}_{\alpha} \otimes 1 / \hat{e}_{\alpha} \otimes 1] = \prod\limits_{s \in S_{\alpha}} \phi_l (\hat{f}_s / \hat{e}_s) \cdot \prod\limits_{i \in I'} \phi_l (\hat{f}^{\alpha}_i / \hat{e}^{\alpha}_i).
\]
Note that $\phi_l (\hat{f}^{\alpha}_i / \hat{e}^{\alpha}_i)$ is well-defined, even though $\hat{f}^{\alpha}_i / \hat{e}^{\alpha}_i$ is only defined up to powers of $\mu_i$. Then 
\begin{align*}
 \prod\limits_{\alpha = 0}^3 [\hat{f}_{\alpha} \otimes 1 / \hat{e}_{\alpha} \otimes 1]^{(-1)^{\alpha + 1}} &= \prod\limits_{\alpha =2}^3 \Big( \prod\limits_{s \in S_{\alpha}} \phi_l (\hat{f}_s / \hat{e}_s) \Big)^{(-1)^{\alpha +1}} \cdot \prod\limits_{\alpha =0}^1 \Big( \prod\limits_{s \in S_{\alpha}} \phi_l (\hat{f}_s / \hat{e}_s) \cdot \prod\limits_{i \in I'} \phi_l (\hat{f}^{\alpha}_i / \hat{e}^{\alpha}_i) \Big)^{(-1)^{\alpha +1}} \\
 &= \phi_l \big( \prod\limits_{\alpha =0}^3 \prod\limits_{s \in S_{\alpha}} (\hat{f}_s / \hat{e}_s) ^{(-1)^{\alpha +1}} \big) \cdot \prod\limits_{i \in I'} \prod\limits_{\alpha =0}^1 \big( \phi_l ( \hat{f}^{\alpha}_i / \hat{e}^{\alpha}_i ) \big) ^{(-1)^{\alpha +1}}
\end{align*}
Because $\hat{e}$ and $\hat{f}$ are in the same equivalence class, $\prod\limits_{\alpha =0}^3 \prod\limits_{s \in S_{\alpha}} (\hat{f}_s / \hat{e}_s) ^{(-1)^{\alpha +1}} =1$. Furthermore, because of our choice of configuration over each singular curve, $\phi_l (\hat{f}^0_i / \hat{e}^0_i) \cdot \big( \phi_l (\hat{f}^1_i / \hat{e}^1_i) \big)^{-1} =1$ for every $i \in I'$. Hence $\prod\limits_{\alpha = 0}^3 [\hat{f}_{\alpha} \otimes 1 / \hat{e}_{\alpha} \otimes 1]^{(-1)^{\alpha + 1}} =1$. By Lemma \ref{Lem: chaingluing}, $\tau (C^{\phi_l}(|\widehat{Y}|), \hat{e} \otimes 1) = \tau (C^{\phi_l}(|\widehat{Y}|), \hat{f} \otimes 1)$. Since the definition of $\tau_0$ does not involve taking lifts to $|\widehat{Y}|$, we have that $\tau^{\phi_l}(Y, \textbf{e}, \omega)$ does not depend on the lift in $|\widehat{Y}|$ representing $\textbf{e}$. The argument that $\tau^{\phi_l}(Y, \textbf{e}, \omega)$ does not depend on the way we orient the cells away from $\Sigma Y$ is similar to the one in the regular case: use Lemma \ref{Lem: chaingluing} and the fact that multiplying a column of a matrix by -1 changes the determinant by -1. Similarly, we can use the argument in the regular case to show that $\tau^{\phi_l}(Y, \textbf{e}, \omega)$ does not depend on the way we order the cells away from $\Sigma Y$: use Lemma \ref{Lem: chaingluing} and the fact that swapping two columns of a matrix changes the determinant by -1. Finally, the fact that $\tau^{\phi_l}(Y, \textbf{e}, \omega)$ does not depend on the sequence $\{\omega_n \}_{n=0}^{3}$ of bases realizing $\omega$ follows from the regular case.
\end{proof}

\begin{Def}\label{torsiondefinition}
Let $\tau(Y, \textbf{e}, \omega)$ denote $\tau^{\phi_1}(Y, \textbf{e}, \omega) + \ldots + \tau^{\phi_r}(Y, \textbf{e}, \omega) \in Q(\mathbb{Z}[H_1^{orb}(Y)])$. Fixing $\omega$, we get a well-defined function $\tau: Eul(Y) \rightarrow Q(\mathbb{Z}[H_1^{orb}(Y)])$ that sends an orbifold Euler structure $\textbf{e}$ to $\tau(Y, \textbf{e}, \omega)$. We call $\tau$ the \textit{orbifold Turaev torsion invariant} of $Y$.
\end{Def}

\begin{Remark}
For a different cell decomposition $Y'$ on $|Y|$ satisfying the same property, there is a canonical identification $\theta : Eul(Y') \rightarrow Eul(Y)$, and the orbifold Turaev torsion invariant $\tau'$ of $Y'$ equals $\tau \circ \theta$. The proof is similar to the argument in the regular case. 
\end{Remark}


\section{Orbifold Gluing formulas}\label{Orbifold Gluing}

In this section, we give several gluing formulas for orbifold Turaev torsion.

\begin{Thm}\label{orbifold gluing}
Let $E$ be a compact, connected, oriented 3-orbifold with $\Sigma E$ an oriented link and $\partial E$ a union of tori. Glue an equivariant solid torus $(S^1 \times D^2) / \mathbb{Z}_{\alpha}$ to $E$ along a component of $\partial E$. We get a 3-orbifold $Y$ with $\Sigma Y = \Sigma E \cup S^1 \times \textbf{0}$. Fix an orbifold Euler structure $\textbf{e}$ on $E$ and a homology orientation $\omega$ on $|E|$. As in the regular case, this induces an orbifold Euler structure $\textbf{e}^Y$ on $Y$ and a homology orientation $\omega^{|Y|}$ on $|Y|$. Orient $S^1 \times \textbf{0}$, and let $h \in H_1^{orb}(Y)$ denote the induced homology class. Then orient the corresponding meridian $\textbf{1} \times (\partial D^2 /  \mathbb{Z}_{\alpha})$ so that its linking number with $S^1 \times \textbf{0}$ is 1. Let $\mu \in H_1^{orb}(Y)$ denote its induced homology class. Let $F$ be a field, and let $\phi: \mathbb{Z}[H_1^{orb}(E)] \rightarrow F$ be a ring homomorphism that extends to a ring homomorphism $\phi^Y : \mathbb{Z}[H_1^{orb}(Y)] \rightarrow F$. We have several cases:

\begin{enumerate}
\item Suppose $\phi^Y (\mu ) \neq 1$. Then $\tau^{\phi^Y} (Y, \textbf{e}^{Y}, \omega^{|Y|}) = \tau^{\phi} (E, \textbf{e}, \omega)$.
\item Suppose $\phi^Y (\mu ) =1$ and $\phi^Y (h ) \neq 1$. Then $\tau^{\phi^Y} (Y, \textbf{e}^{Y}, \omega^{|Y|}) = \tau^{\phi} (E, \textbf{e}, \omega) \cdot (\phi^Y (h) -1)^{-1}$.
\item Suppose $\phi^Y (\mu ) =1$ and $\phi^Y (h ) = 1$. Suppose further that $C^{\phi^Y}(|\widehat{Y}|)$ is acyclic. Let $e^2_2$ denote the 2-cell in $(S^1 \times D^2) / \mathbb{Z}_{\alpha}$. Orient $e^2_2$ so that $\partial (e^2_2) = \textbf{1} \times (\partial D^2 /  \mathbb{Z}_{\alpha})$. Let $e^3$ denote the 3-cell $(S^1 - \textbf{1}) \times (int(D^2) / \mathbb{Z}_{\alpha})$ in $(S^1 \times D^2) / \mathbb{Z}_{\alpha}$. Give $e^3$ the product orientation. Then we can lift $e^2_2$ to an oriented 2-cell $\hat{e}^2_2 \subset |\widehat{Y}|$ and $e^3$ to an oriented 3-cell $\hat{e}^3  \subset |\widehat{Y}|$ so that the homology classes $\big( \partial (\hat{e}^2_2)  \cap |\widehat{E}| \big) \otimes 1 \in H_1(C^{\phi}(|\widehat{E}|))$, $\big( \partial (\hat{e}^3) \cap |\widehat{E}| \big) \otimes 1 \in H_2(C^{\phi}(|\widehat{E}|))$  form a basis for $\bigoplus\limits_{i=0}^3 H_i (C^{\phi}(|\widehat{E}|))$. Furthermore, $\tau^{\phi^Y} (Y, \textbf{e}^{Y}, \omega^{|Y|}) = \tau^{\phi} \Big( E, \textbf{e}, \omega; \{ \big( \partial (\hat{e}^2_2)  \cap |\widehat{E}| \big) \otimes 1, \big( \partial (\hat{e}^3) \cap |\widehat{E}| \big) \otimes 1\} \Big)$.
\end{enumerate}
\end{Thm}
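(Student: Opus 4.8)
The plan is to reduce each of the three cases to the corresponding case of the regular gluing formula (Theorem~\ref{gluing lemmas}) by passing to the underlying-space picture. Write $W = (S^1 \times D^2)/\mathbb{Z}_\alpha$; since $\mathbb{Z}_\alpha$ acts by rotations about the core, $|W|$ is again a solid torus $S^1 \times D^2$, and $|Y|$ is obtained by gluing this solid torus to $|E|$ along a torus boundary component. However one must be careful: the cell structure on $W$ prescribed in Section~\ref{orbifold Euler structures} is the one in which the core $S^1 \times \textbf{0}$ is a subcomplex, which is exactly the cell structure we arranged for the regular gluing lemma in Section~\ref{torsion 3-manifolds} (see the Remark after the definition of $\textbf{e}^M$). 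So the combinatorics match up, and the only genuinely new input is how the chain complex $C^{\phi_l}(|\widehat{Y}|)$ of the \emph{orbifold} cover differs from the chain complex of the ordinary abelian cover of $|Y|$.

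First I would set up the key algebraic comparison. The orbifold cover $\widehat{Y}$ was constructed (Section~\ref{3-orbifolds}) by taking the regular cover $\overline{E}$ of $E = E\setminus\text{(nbhds)}$ with deck group $H_1^{orb}(Y)$ and then canonically filling in over each $(S^1\times D^2)/\mathbb{Z}_{\alpha_i}$. Because $\phi = \phi^Y$ is assumed to be defined on $\mathbb{Z}[H_1^{orb}(Y)]$, twisting $C(|\widehat{Y}|)$ by $\phi$ produces a chain complex whose cells away from $\Sigma Y$ behave exactly as in the manifold case; the new features are confined to the $0$- and $1$-cells lying over the singular curves, governed by Observation~\ref{Core Observation} and Lemma~\ref{Lem: orbifold invariant}. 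Concretely, I would show that $C^{\phi}(|\widehat{Y}|)$ is obtained from the complex one would get by gluing $C^{\phi}(|\widehat{E}|)$ to the twisted complex of the solid torus $S^1\times D^2$, \emph{except} that the core $1$-cell $e^1$ and core $0$-cell $e^0$ of the solid torus contribute only when $\phi(\mu)=1$ (by Observation~\ref{Core Observation}, they vanish after twisting when $\phi(\mu)\neq 1$). This is the structural heart of the argument and splits the proof naturally into the stated cases.

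Now the three cases:

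\textbf{Case 1} ($\phi^Y(\mu)\neq 1$): By Observation~\ref{Core Observation}, both core cells $\hat e^0$, $\hat e^1$ over $S^1\times\textbf{0}$ die after twisting by $\phi^Y$, so $C^{\phi^Y}(|\widehat{Y}|)$ is, as a based complex, the direct sum of $C^{\phi}(|\widehat{E}|)$ with the twisted complex of the remaining cells $\hat e^1_1, \hat e^2_1, \hat e^2_2, \hat e^3$ of the solid torus together with the boundary torus cells. Those remaining solid-torus cells, twisted by $\phi^Y$, form an acyclic based subcomplex with torsion $\pm 1$ (this is a direct computation with the explicit boundary formulas $\partial\hat e^2_1 = \pm(h-1)\hat e^1_1 \pm \hat e^1_2 \pmod{\text{bdry}}$, $\partial\hat e^3 = \pm(h-1)\hat e^2_2 \pmod{\text{bdry}}$; the $(h-1)$ factors cancel in pairs). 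Multiplicativity of torsion for short exact sequences of based complexes, plus the compatibility of $\textbf{e}^Y$ with $\textbf{e}$ and of $\omega^{|Y|}$ with $\omega$ built into the gluing construction, then give $\tau^{\phi^Y}(Y,\textbf{e}^Y,\omega^{|Y|}) = \tau^{\phi}(E,\textbf{e},\omega)$. Here the sign $(-1)^{1+(b_1(|E|)+1)(b_1(|Y|)+1)}$ in the definition of $\omega^{|Y|}$ is precisely what makes $\tau_0$ transform correctly; this is the one place I expect bookkeeping to be delicate, but it is identical to the manifold computation in \cite[Chapter~5.2]{Tu02}.

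\textbf{Cases 2 and 3} ($\phi^Y(\mu)=1$): When $\phi^Y(\mu)=1$, the core cells survive after twisting, and now the twisted complex $C^{\phi^Y}(|\widehat{Y}|)$ is precisely the one obtained by gluing the full solid-torus complex (including core cells) to $C^{\phi}(|\widehat{E}|)$ — i.e.\ exactly the situation of Theorem~\ref{gluing lemmas} applied to $|Y|$, $|E|$, $|W| = S^1\times D^2$. One must observe that the ring map $\phi$ on $\mathbb{Z}[H_1^{orb}(E)]$ restricts (via the natural surjection $H_1(|E|) \to H_1^{orb}(E)$ coming from $|\widehat{E}| = \widehat{|E|}$ when we work with the $H_1^{orb}$-cover — or more carefully, via whatever map identifies the relevant covers) to a map to which Theorem~\ref{gluing lemmas} applies; the hypotheses $\phi^Y(\mu)=1$ and "$\phi^Y(h)\neq 1$ vs.\ $\phi^Y(h)=1$" translate verbatim into the two cases of that theorem. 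Case~2 is then immediate from Theorem~\ref{gluing lemmas}(1). For Case~3 one additionally invokes Theorem~\ref{gluing lemmas}(2): the hypothesis that $C^{\phi^Y}(|\widehat{Y}|)$ is acyclic is exactly the acyclicity hypothesis there, the lifts $\hat e^2_2$, $\hat e^3$ and the claim that $\big(\partial\hat e^2_2 \cap |\widehat{E}|\big)\otimes 1$ and $\big(\partial\hat e^3 \cap |\widehat{E}|\big)\otimes 1$ form a basis for $\bigoplus_i H_i(C^{\phi}(|\widehat{E}|))$ are furnished directly by that lemma, and the torsion formula transfers word for word.

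\textbf{Main obstacle.} The real work is the structural comparison in the second paragraph: verifying that twisting the orbifold-cover chain complex by a map defined on $\mathbb{Z}[H_1^{orb}(Y)]$ yields, away from the artifacts of Observation~\ref{Core Observation}, literally the same based complex as in the manifold gluing setup — in particular that the identification of deck groups and of the filled-in region over $S^1\times\textbf{0}$ with the standard solid-torus cover is compatible with the prescribed cell structures and with the induced Euler structure $\textbf{e}^Y$. Once that identification is pinned down, Cases~2 and~3 are essentially quotations of Theorem~\ref{gluing lemmas}, and Case~1 is the short extra computation with the vanishing core cells together with the same sign bookkeeping used in the manifold case.
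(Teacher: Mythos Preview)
Your overall instinct---multiplicativity of torsion for a short exact sequence, with the quotient piece coming from the solid-torus cells---is correct, and your Case~1 sketch is close to what the paper does (though it is a short exact sequence $0\to C'\to C\to C''\to 0$ with $C'=C^{\phi}(|\widehat{E}|)$, not a direct sum: boundaries of the interior solid-torus cells land in cells over $\partial E\subset |\widehat{E}|$, so the boundary-torus cells cannot sit in both summands).

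The genuine gap is your plan for Cases~2 and~3. You propose to \emph{quote} Theorem~\ref{gluing lemmas} applied to the manifolds $|Y|$, $|E|$, $|W|$. But that theorem computes the \emph{manifold} torsions $\tau^{\phi^M}(M,\ldots)$, built from the maximal abelian covers $\widehat{|E|}$, $\widehat{|Y|}$ with deck groups $H_1(|E|)$, $H_1(|Y|)$; the quantities you must produce are the \emph{orbifold} torsions, built from $|\widehat{E}|$, $|\widehat{Y}|$ with deck groups $H_1^{orb}(E)$, $H_1^{orb}(Y)$. When $\Sigma E\neq\emptyset$ these covers, chain complexes, and Euler-structure sets are genuinely different (cf.\ Observation~\ref{Core Observation} and Lemma~\ref{Lem: orbifold invariant} applied to the components of $\Sigma E$, and Theorem~\ref{orbiEulergen}, which needs the special hypothesis $H_1^{orb}\cong H_1$). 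Your parenthetical ``via the natural surjection $H_1(|E|)\to H_1^{orb}(E)$'' is exactly where this breaks: the natural surjection goes the \emph{other} way, $H_1^{orb}(E)\twoheadrightarrow H_1(|E|)$, so $\phi$ on $\mathbb{Z}[H_1^{orb}(E)]$ need not factor through $\mathbb{Z}[H_1(|E|)]$, and there is no ring map to which Theorem~\ref{gluing lemmas} can be applied. At best the \emph{proof method} of that theorem transplants; the statement does not.

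That transplant is precisely what the paper does, and it does so uniformly for all three cases rather than splitting off Case~1. One sets $C'=C^{\phi}(|\widehat{E}|)$, $C=C^{\phi^Y}(|\widehat{Y}|)$, $C''=C/C'$ and the parallel real complexes $c'\subset c$, computes $\tau_0(c'')=-1$ once, then in each case writes down $C''$ explicitly (Observation~\ref{Core Observation} kills the two core cells in Case~1 only) and finds $\tau(C'')=1$, $(\phi^Y(h)-1)^{-1}$, or non-acyclic with $H_*(C'')$ concentrated in degrees $2,3$, respectively. The conclusion then follows from the multiplicativity formulas \cite[V.1.a, V.1.c, V.2.b]{Tu02} together with the check $\nu(C,C')=\nu(c,c')$. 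So the fix to your proposal is simply to run for Cases~2 and~3 the same short-exact-sequence computation you already outlined for Case~1, rather than appealing to Theorem~\ref{gluing lemmas}.
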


\begin{Remark}
Because $H_1^{orb}(Y) \cong H_1^{orb}(E) / \langle \mu^{\alpha} \rangle$, $\phi$ extends to $\phi^Y$ when $\phi(\mu^{\alpha}) =1$.
\end{Remark}
%

\begin{proof}[Proof of Theorem \ref{orbifold gluing}]
We mimic the argument in the regular case. First endow $|Y|$ with a cell structure that restricts to the preferred cell structure near $\Sigma Y$. Then order the cells in $|Y|$. We assume that the one and two cells in the interior of $(S^1 \times D^2) / \mathbb{Z}_{\alpha}$ satisfy the following: $e^1_1 = \textbf{1} \times int ([0,1])$ is smaller than $e^1_2 = (S^1 - \textbf{1}) \times \textbf{0}$, and  $e^2_1 = (S^1 - \textbf{1}) \times int([0,1])$ is smaller than $e^2_2 = \textbf{1} \times (int(D^2)/ \mathbb{Z}_{\alpha})$. We will need this for later computations. Next orient the cells in $|Y|$ as follows. As before, give each 1-cell in $\Sigma Y$ the orientation of the curve that contains it. In particular, the 1-cell $e^1_2$ inherits the orientation of $S^1 \times \textbf{0}$. Orient $e^2_1$ so that $\partial (e^2_1) = e^1_2$ modulo the 1-cell in $S^1 \times (\partial D^2 / \mathbb{Z}_{\alpha})$. Then orient $e^1_1$ so that $\partial (e^1_1) = e^0 = (1, 0)$ module the 0-cell in $S^1 \times (\partial D^2 / \mathbb{Z}_{\alpha})$. The oriented meridian $\textbf{1} \times (\partial D^2 /  \mathbb{Z}_{\alpha})$ bounds the 2-cell $e^2_2$. We give $e^2_2$ the induced orientation, using the outward last convention for the normal vector. In turn this induces an orientation of the 3-cell $e^3 = (S^1 - \textbf{1}) \times (int(D^2) / \mathbb{Z}_{\alpha})$. Orient the remaining cells in $|Y|$ in an arbitrary way.

Consider the cellular chain complexes $c' = C(|E|, \mathbb{R}), c = C(|Y|, \mathbb{R}), \text{and } c'' = c/c' = C(|Y|, |E|, \mathbb{R})$. Our choices above determine ordered bases for $c', c, \text{and } c''$. Note that these bases are compatible in the sense that for every $i$, the determinant of the matrix that takes the given ordered basis for $c_i$ to the ordered basis gotten by concatenating the ordered basis for $c'_i$ with the ordered basis for $c''_i$ is 1. The homology orientation $\omega$ on $|E|$ induces a homology orientation $\omega^{|Y|}$ on $|Y|$ and a relative homology orientation $\omega^{(|Y|, |E|)}$ on $(|Y|, |E|)$. Choose ordered bases for the homology groups of $c', c, \text{and } c''$ realizing $\omega, \omega^{|Y|}, \text{and } \omega^{(|Y|, |E|)}$, respectively. We can now compute the torsions of $c', c, \text{and } c''$. Let $\tau_0 (c'), \tau_0 (c), \text{and } \tau_0 (c'')$ denote their signs. By \cite[V.1.a, V.2.b]{Tu02}, we get that
\[ \tau_0 (c) = (-1)^{\nu (c,c')+1}\tau_0(c')\tau_0(c''), \]
where 
\begin{equation} \label{nu(c,c')} 
\nu (c, c') = \sum\limits_{i=0}^3 \alpha_i(C'')\alpha_{i-1}(C') \in \mathbb{Z}_2 
\end{equation}
and
\[ 
\alpha_j(C^{\ast})=
\begin{cases} 
      dim(C^{\ast}_0) + \ldots + dim(C^{\ast}_{j}) \in \mathbb{Z}_2 & j \in \{0, 1, 2 , 3 \} \\
      0 \in \mathbb{Z}_2 & j =-1 
   \end{cases}.
\]

\begin{Lem}
$\tau_0 (c'') = -1$.
\end{Lem}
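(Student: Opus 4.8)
The plan is a direct computation with the relative cellular chain complex $c'' = C(|Y|,|E|,\mathbb{R})$. By excision this is the relative cellular chain complex of the pair $\big((S^1\times D^2)/\mathbb{Z}_\alpha,\ (S^1\times\partial D^2)/\mathbb{Z}_\alpha\big)$, based by the six cells in the interior of the glued-in piece, namely $e^0$; $e^1_1, e^1_2$; $e^2_1, e^2_2$; $e^3$, with the orientations and the orderings ($e^1_1$ before $e^1_2$, $e^2_1$ before $e^2_2$) fixed in the proof above. The first step is to record the relative boundary operators: every face of each of these cells either cancels in pairs or lies on the torus $(S^1\times\partial D^2)/\mathbb{Z}_\alpha$, so in $c''$ one finds $\partial e^1_2 = 0$, $\partial e^2_2 = 0$, $\partial e^3 = 0$, whereas $\partial e^1_1 = e^0$ and $\partial e^2_1 = e^1_2$ are precisely the two orientation normalizations made in the proof. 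Thus $c''$ is the complex
\[
0 \longrightarrow \mathbb{R}\langle e^3\rangle \xrightarrow{\ 0\ } \mathbb{R}\langle e^2_1, e^2_2\rangle \xrightarrow{\ \partial\ } \mathbb{R}\langle e^1_1, e^1_2\rangle \xrightarrow{\ \partial\ } \mathbb{R}\langle e^0\rangle \longrightarrow 0,
\]
with $\partial e^2_1 = e^1_2$, $\partial e^2_2 = 0$, $\partial e^1_1 = e^0$, $\partial e^1_2 = 0$; consequently $H_0(c'') = H_1(c'') = 0$, $H_2(c'') = \mathbb{R}\langle [e^2_2]\rangle$ and $H_3(c'') = \mathbb{R}\langle [e^3]\rangle$, in agreement with the computation of $H_*(S^1\times D^2, S^1\times\partial D^2;\mathbb{R})$ recalled in Section~\ref{torsion 3-manifolds}.

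The second step is to check that the ordered homology basis $\big([e^2_2],[e^3]\big)$ realizes the induced relative homology orientation $\omega^{(|Y|,|E|)}$, so that $\tau_0(c'')$ is literally the sign of the torsion of $c''$ taken with the chain bases $(e^0)$, $(e^1_1,e^1_2)$, $(e^2_1,e^2_2)$, $(e^3)$ and these homology bases. By definition $\omega^{(|Y|,|E|)}$ orients $\bigoplus_{n} H_n(c'')$ by listing first a generator of $H_2$ representing the orientation of $\mathbf{1}\times D^2$ and then a generator of $H_3$ representing the product of the core orientation with the orientation of $\mathbf{1}\times D^2$. In the proof, $e^2_2$ was oriented so that $\partial e^2_2$ is the meridian $\mathbf{1}\times(\partial D^2/\mathbb{Z}_\alpha)$ (linking number $+1$ with the oriented core), i.e.\ $e^2_2$ carries the orientation of $\mathbf{1}\times D^2$, and $e^3$ was given the product of the core orientation with this orientation of $\mathbf{1}\times D^2$. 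Since $\omega^{(|Y|,|E|)}$ does not depend on the auxiliary choice of orientation of $\mathbf{1}\times D^2$, we may take that auxiliary orientation to be the one carried by $e^2_2$; then $\big([e^2_2],[e^3]\big)$ is positively oriented, as required.

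The third step is to evaluate the torsion. Take $b_1 = (e^1_1)$, $b_2 = (e^2_1)$ and $b_0 = b_3 = \emptyset$, so that $\partial(b_1) = (e^0)$ is a basis of $\partial C_1 = \mathbb{R}\langle e^0\rangle$ and $\partial(b_2) = (e^1_2)$ is a basis of $\partial C_2 = \mathbb{R}\langle e^1_2\rangle$. The rearranged bases $\partial(b_{i+1})\,h_i\,b_i$ of $C_0,\dots,C_3$ are then $(e^0)$, $(e^1_2,e^1_1)$, $(e^2_2,e^2_1)$, $(e^3)$, whose change-of-basis matrices relative to $c_0,\dots,c_3$ have determinants $1,\,-1,\,-1,\,1$. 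With $(\dim C_0,\dots,\dim C_3) = (1,2,2,1)$ and $(\dim H_0,\dots,\dim H_3) = (0,0,1,1)$ one computes $|c''| = 1\cdot 0 + 3\cdot 0 + 5\cdot 1 + 6\cdot 2 = 17 \equiv 1 \pmod{2}$, hence
\[
\tau(c'') \;=\; (-1)^{|c''|}\cdot 1^{-1}\cdot(-1)^{1}\cdot(-1)^{-1}\cdot 1^{1} \;=\; (-1)\cdot 1 \;=\; -1,
\]
so $\tau_0(c'') = -1$. I expect the main obstacle to be the orientation bookkeeping of the second step — checking that the cell orientations fixed in the proof really do make $\big([e^2_2],[e^3]\big)$ represent $\omega^{(|Y|,|E|)}$ — together with keeping straight the signs in the torsion product and in the exponent $|c''|$; the remainder is routine linear algebra on a four-term complex.
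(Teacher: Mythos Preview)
Your proof is correct and follows exactly the paper's approach: write out the relative complex $c''$, identify its homology, verify that $([e^2_2],[e^3])$ realizes $\omega^{(|Y|,|E|)}$, and compute $\tau(c'')=(-1)^{|c''|}\cdot 1=-1$. The paper compresses the torsion computation into the single line $\tau(c'')=(-1)^1\cdot 1=-1$, whereas you spell out the choices of $b_i$, the change-of-basis determinants, and the evaluation of $|c''|=17\equiv 1\pmod 2$; your extra care with the orientation check in the second step is also more explicit than the paper's bare assertion.
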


\begin{proof}
$c'' = (\textbf{0} \rightarrow \mathbb{R} \langle e^3 \rangle \xrightarrow{\partial_2} \mathbb{R} \langle e^2_1, e^2_2 \rangle \xrightarrow{\partial_1} \mathbb{R} \langle e^1_1, e^1_2 \rangle \xrightarrow{\partial_0} \mathbb{R} \langle e^0 \rangle \rightarrow \textbf{0})$, with boundary maps given by
\[ \partial_0(e^1_1) = e^0, \partial_0(e^1_2) =0, \]
\[ \partial_1(e^2_1) = e^1_2, \partial_1(e^2_2) =0, \]
\[ \partial_2(e^3) = 0. \]
Note that 
\[ 
H_i (c'')=
\begin{cases} 
      \textbf{0} & i \neq 2, 3\\
       \langle e^2_2 \rangle & i =2\\
       \langle e^3 \rangle & i=3
   \end{cases}
\]
and that $\{e^2_2, e^3\}$ is an ordered basis for $\bigoplus\limits_{i=0}^3 H_i(c'')$ realizing $\omega^{(|Y|, |E|)}$. If $\tau(c'')$ denotes the torsion of $c''$ with respect to $\{e^2_2, e^3\}$, then $\tau(c'') = (-1)^{1} \cdot 1 = -1$. Since $\tau_0(c'')$ is independent of our choice of ordered basis for $\bigoplus\limits_{i=0}^3 H_i(c'')$ realizing $\omega^{(|Y|, |E|)}$, $\tau_0(c'') =-1$.
\end{proof}

\noindent As a result,
\begin{equation} \label{torsionsignSES} 
\tau_0 (c) = (-1)^{\nu (c,c')}\tau_0(c'). 
\end{equation}

Now choose a lift $\{\hat{e}_j\}$ in $|\widehat{E}|$ representing $\textbf{e}$. By fixing a projection of $|\widehat{E}|$ to $|\widehat{Y}|$, we can think of it as a lift in $|\widehat{Y}|$ of the cells in $|E| \subset |Y|$. Lift the cells $e^0, \ldots, e^3$ in the interior of $(S^1 \times D^2) / \mathbb{Z}_{\alpha} \subset |Y|$ to cells $\hat{e}^0, \ldots, \hat{e}^3$ in $|\widehat{Y}|$ so that $\partial (\hat{e}^1_2) = \pm (h-1)\hat{e}^0$, $\partial (\hat{e}^2_1) = \pm (h-1)\hat{e}^1_1 \pm \hat{e}^1_2$ modulo a 1-cell lying over $S^1 \times (\partial D^2 /  \mathbb{Z}_{\alpha})$, and $\partial (\hat{e}^3) = \pm (h-1)\hat{e}^2_2$ modulo a 2-cell lying over $S^1 \times (\partial D^2 /  \mathbb{Z}_{\alpha})$. Assume that $\partial (\hat{e}^2_2) = \pm (\mu-1)\hat{e}^1_1$ modulo a 1-cell lying over $S^1 \times (\partial D^2 /  \mathbb{Z}_{\alpha})$. By definition, $\{\hat{e}_j\} \cup \{\hat{e}^0, \ldots, \hat{e}^3\}$ represents $\textbf{e}^Y$. Each cell in $\{\hat{e}_j\} \cup \{\hat{e}^0, \ldots, \hat{e}^3\}$ inherits an orientation. With it, we have $\partial (\hat{e}^1_2) = (h-1)\hat{e}^0$, $\partial (\hat{e}^2_1) = (1-h)\hat{e}^1_1 + \hat{e}^1_2$ modulo a 1-cell lying over $S^1 \times (\partial D^2 /  \mathbb{Z}_{\alpha})$, $\partial (\hat{e}^3) = (h-1)\hat{e}^2_2$ modulo a 2-cell lying over $S^1 \times (\partial D^2 /  \mathbb{Z}_{\alpha})$, and $\partial (\hat{e}^2_2) = (\mu-1)\hat{e}^1_1$ modulo a 1-cell lying over $S^1 \times (\partial D^2 /  \mathbb{Z}_{\alpha})$. Furthermore, $\{\hat{e}_j\} \cup \{\hat{e}^0, \ldots, \hat{e}^3\}$ inherits an ordering.

Consider the $F$-chain complexes $C' = C^{\phi}(|\widehat{E}|), C = C^{\phi^Y}(|\widehat{Y}|), \text{and } C'' = C/C'$. The orientation and order of the cells in $\{\hat{e}_j\} \cup \{\hat{e}^0, \ldots, \hat{e}^3\}$ determine compatibly ordered bases for $C', C, \text{and }C''$.

\begin{Proofpart1}

\begin{Lem}
$C''$ is acylic and $\tau (C'') = 1$.
\end{Lem}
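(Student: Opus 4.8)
The plan is to identify the quotient complex $C''=C/C'$ completely and then read off acyclicity and torsion. Every cell of $|Y|$ outside the interior of $(S^1\times D^2)/\mathbb{Z}_{\alpha}$ already lies in $|E|$, hence in $C'$, so $C''$ is carried by the lifts $\hat{e}^0,\hat{e}^1_1,\hat{e}^1_2,\hat{e}^2_1,\hat{e}^2_2,\hat{e}^3$. Since we are in the case $\phi^Y(\mu)\neq 1$, and the cells $e^0,e^1_2$ lie on the singular curve $S^1\times\textbf{0}$ whose meridian is $\mu$, Observation \ref{Core Observation} gives $\hat{e}^0\otimes 1 = 0$ and $\hat{e}^1_2\otimes 1 = 0$ in $C$; these contribute nothing to $C''$. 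Hence $C''$ is concentrated in degrees $1,2,3$, with $C''_1 = F\langle\hat{e}^1_1\otimes 1\rangle$, $C''_2 = F\langle\hat{e}^2_1\otimes 1,\hat{e}^2_2\otimes 1\rangle$, $C''_3 = F\langle\hat{e}^3\otimes 1\rangle$, and $C''_0 = 0$.

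Next I would read off the boundary maps of $C''$ by reducing the relations fixed in the construction of $\textbf{e}^Y$ modulo $C'$ (which kills every cell lying over $S^1\times(\partial D^2/\mathbb{Z}_{\alpha})$) and modulo the vanishing cells $\hat{e}^0\otimes 1,\hat{e}^1_2\otimes 1$. This yields $\partial(\hat{e}^2_1\otimes 1) = \phi^Y(1-h)\,(\hat{e}^1_1\otimes 1)$ and $\partial(\hat{e}^2_2\otimes 1) = \phi^Y(\mu-1)\,(\hat{e}^1_1\otimes 1)$, while $\partial(\hat{e}^3\otimes 1) = \phi^Y(h-1)\,(\hat{e}^2_2\otimes 1) + \phi^Y(\mu-1)\,(\hat{e}^2_1\otimes 1)$; the $\hat{e}^2_1$-term here is what the $(\mu-1)$-twist of the radial $1$-cell $int([0,1])$ appearing in the cellular boundary of $e^2_2$ produces, it has no counterpart in the classical solid-torus gluing (where $\mu=1$), and it is in any event consistent with, and when $\phi^Y(h)\neq 1$ forced by, $\partial^2=0$.

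Acyclicity is then immediate. Because $\phi^Y(\mu)\neq 1$, the vector $\partial(\hat{e}^2_2\otimes 1) = \phi^Y(\mu-1)\,(\hat{e}^1_1\otimes 1)$ is a basis of $C''_1$, so the differential $C''_2\to C''_1$ is surjective and $H_1(C'')=0$; likewise $\partial(\hat{e}^3\otimes 1)\neq 0$ (its $\hat{e}^2_1$-coefficient $\phi^Y(\mu-1)$ is nonzero), so $C''_3\to C''_2$ is injective and $H_3(C'')=0$; and then the image of $C''_3\to C''_2$ is one-dimensional and equals the one-dimensional kernel of $C''_2\to C''_1$, so $H_2(C'')=0$. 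For the torsion I would apply the definition with $b_3 = \{\hat{e}^3\otimes 1\}$, $b_2 = \{\hat{e}^2_2\otimes 1\}$ and $b_1 = b_0 = \emptyset$: since $C''$ is acyclic $|C''|=0$, and the change-of-basis determinants $[\partial_i(b_{i+1})b_i/c_i]$ equal $1$ for $i=0,3$ and $\phi^Y(\mu-1)$ for $i=1,2$ (the latter from $\partial(\hat{e}^2_2\otimes 1)$ and from the $\hat{e}^2_1$-coefficient of $\partial(\hat{e}^3\otimes 1)$, using the ordering $e^2_1$ before $e^2_2$), so $\tau(C'') = \prod_{i=0}^3[\partial_i(b_{i+1})b_i/c_i]^{(-1)^{i+1}} = \phi^Y(\mu-1)\cdot\phi^Y(\mu-1)^{-1} = 1$.

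The only step needing real care is getting $\partial(\hat{e}^3\otimes 1)$ exactly right: its $\hat{e}^2_1$-coefficient is precisely what keeps $C''_3\to C''_2$ injective even when $\phi^Y(h)=1$, and it is also what forces the cancellation giving $\tau(C'')=1$ on the nose. Everything else — the signs, the ordering conventions on the interior cells, and the elementary determinant arithmetic — runs exactly as in the classical computation.
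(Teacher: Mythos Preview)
Your proof is correct and follows essentially the same approach as the paper: both invoke Observation~\ref{Core Observation} to kill $\hat{e}^0\otimes 1$ and $\hat{e}^1_2\otimes 1$, write down the identical boundary formulas for $C''$ (including the $(\phi^Y(\mu)-1)\,\hat{e}^2_1\otimes 1$ term in $\partial(\hat{e}^3\otimes 1)$), and then compute the torsion directly. You in fact give more detail than the paper, which simply describes $\ker(\partial_1\otimes id)$ and then asserts ``by direct computation, $\tau(C'')=1$''; your explicit choice of $b_2=\{\hat{e}^2_2\otimes 1\}$, $b_3=\{\hat{e}^3\otimes 1\}$ and the resulting cancellation $(\phi^Y(\mu)-1)\cdot(\phi^Y(\mu)-1)^{-1}=1$ is exactly what that phrase unpacks to.
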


\begin{proof}
From Observation \ref{Core Observation}, $\hat{e}^0 \otimes 1 = \hat{e}^1_2 \otimes 1 =0$. Then
\[ C'' = (\textbf{0} \rightarrow F \langle \hat{e}^3 \otimes 1 \rangle \xrightarrow{\partial_2 \otimes id} F \langle \hat{e}^2_1 \otimes 1, \hat{e}^2_2 \otimes 1 \rangle \xrightarrow{\partial_1 \otimes id} F \langle \hat{e}^1_1 \otimes 1 \rangle \xrightarrow{\partial_0 \otimes id} \textbf{0}), \]
with boundary maps given by
\[ (\partial_0 \otimes id) (\hat{e}^1_1 \otimes 1) = 0, \]
\[ (\partial_1 \otimes id) (\hat{e}^2_1 \otimes 1) = (1-\phi^Y(h))(\hat{e}^1_1 \otimes 1), (\partial_1 \otimes id) (\hat{e}^2_2 \otimes 1) = (\phi^Y(\mu) -1)(\hat{e}^1_1 \otimes 1), \]
\[ (\partial_2 \otimes id) (\hat{e}^3 \otimes 1) = (\phi^Y(h) -1)(\hat{e}^2_2 \otimes 1) + (\phi^Y(\mu) -1)(\hat{e}^2_1 \otimes 1). \]
Note that $Ker(\partial_1 \otimes id) = \{\alpha (\hat{e}^2_1 \otimes 1) + \alpha (\phi^Y(h) -1)(\phi^Y(\mu) -1)^{-1} (\hat{e}^2_2 \otimes 1) \mid \alpha \in F \}$. Then it is not hard to see that $C''$ is acyclic. By direct computation, $\tau (C'') = 1$.
%
%
%
\end{proof}

Because $C''$ is acyclic, either $C'$ and $C$ are acyclic or not. If they're not acyclic, then $\tau(C'), \tau(C) =0$, which implies $\tau^{\phi^Y} (Y, \textbf{e}^{Y}, \omega^{|Y|}) = 0 = \tau^{\phi} (E, \textbf{e}, \omega)$, as needed. Suppose $C'$ and $C$ are acyclic. By \cite[V.1.c]{Tu02},
\[ \tau(C) = (-1)^{\nu (C, C')} \tau(C') \tau(C''), \]
where $\nu (C, C')$ is defined as in Equation \ref{nu(c,c')} above. Since $\tau(C'')=1$, this simplifies to
\begin{equation} \label{torsionacyclicSES}
\tau(C) = (-1)^{\nu (C, C')} \tau(C').
\end{equation}
Multiplying Equation \ref{torsionacyclicSES} by Equation \ref{torsionsignSES} gives
\begin{equation} \label{torsionacyclicSES2} 
\tau_0 (c) \tau(C) = (-1)^{\nu (C, C')} (-1)^{\nu (c,c')}  \tau_0(c') \tau(C'). 
\end{equation}
It's easy to check that $\nu (C, C') = \nu (c,c') \in \mathbb{Z}_2$. Then Equation \ref{torsionacyclicSES2} becomes
\[ \tau_0 (c) \tau(C) = \tau_0(c') \tau(C'). \]
By definition, $\tau_0 (c) \tau(C) = \tau^{\phi^Y} (Y, \textbf{e}^{Y}, \omega^{|Y|})$ and $\tau_0(c') \tau(C') = \tau^{\phi} (E, \textbf{e}, \omega)$, so this concludes the proof of Case 1.

\end{Proofpart1}

\begin{Proofpart2}

\begin{Lem}
$C''$ is acylic and $\tau (C'') = (\phi^Y (h) -1)^{-1}$.
\end{Lem}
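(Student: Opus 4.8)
The plan is to follow the same template used in Case 1, now specializing to the situation $\phi^Y(\mu) = 1$ and $\phi^Y(h) \neq 1$. As in Case 1, the quotient chain complex $C'' = C/C'$ has underlying groups generated by $\hat{e}^0 \otimes 1, \hat{e}^1_1 \otimes 1, \hat{e}^1_2 \otimes 1, \hat{e}^2_1 \otimes 1, \hat{e}^2_2 \otimes 1, \hat{e}^3 \otimes 1$, but now Observation \ref{Core Observation} does \emph{not} kill $\hat{e}^0 \otimes 1$ and $\hat{e}^1_2 \otimes 1$ because $\phi^Y(\mu) = 1$. So first I would write out $C''$ explicitly as a four-term complex $\textbf{0} \to F\langle \hat{e}^3 \otimes 1\rangle \to F\langle \hat{e}^2_1 \otimes 1, \hat{e}^2_2 \otimes 1\rangle \to F\langle \hat{e}^1_1 \otimes 1, \hat{e}^1_2 \otimes 1\rangle \to F\langle \hat{e}^0 \otimes 1\rangle \to \textbf{0}$ and record the boundary maps by plugging $\phi^Y(\mu) = 1$ into the formulas established just before the case split: $(\partial_0 \otimes \mathrm{id})(\hat{e}^1_1 \otimes 1) = \hat{e}^0 \otimes 1$, $(\partial_0 \otimes \mathrm{id})(\hat{e}^1_2 \otimes 1) = (\phi^Y(h) - 1)(\hat{e}^0 \otimes 1)$, $(\partial_1 \otimes \mathrm{id})(\hat{e}^2_1 \otimes 1) = (1 - \phi^Y(h))(\hat{e}^1_1 \otimes 1) + \hat{e}^1_2 \otimes 1$, $(\partial_1 \otimes \mathrm{id})(\hat{e}^2_2 \otimes 1) = 0$, and $(\partial_2 \otimes \mathrm{id})(\hat{e}^3 \otimes 1) = (\phi^Y(h) - 1)(\hat{e}^2_2 \otimes 1)$.

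Next I would verify acyclicity directly: in degree $0$, $\partial_0 \otimes \mathrm{id}$ is surjective since $\hat{e}^1_1 \otimes 1 \mapsto \hat{e}^0 \otimes 1$; in degree $1$ the kernel of $\partial_0 \otimes \mathrm{id}$ is spanned by $\hat{e}^1_2 \otimes 1 - (\phi^Y(h)-1)(\hat{e}^1_1 \otimes 1)$, which is exactly the image of $\hat{e}^2_1 \otimes 1$ under $\partial_1 \otimes \mathrm{id}$ (up to sign), so $H_1 = 0$; in degree $2$, since $\phi^Y(h) \neq 1$ the element $\hat{e}^2_2 \otimes 1$ spans $\mathrm{Ker}(\partial_1 \otimes \mathrm{id})$ and is hit by $\hat{e}^3 \otimes 1$ up to the nonzero scalar $\phi^Y(h) - 1$, giving $H_2 = 0$; and $\partial_2 \otimes \mathrm{id}$ is injective because $\phi^Y(h) - 1 \neq 0$, so $H_3 = 0$. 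Then I would compute $\tau(C'')$ from the definition: choosing $b_3 = \{\hat{e}^3 \otimes 1\}$, $b_2 = \{\hat{e}^2_1 \otimes 1\}$, $b_1 = \{\hat{e}^1_1 \otimes 1\}$ (with $b_0$ empty), one reads off the three change-of-basis determinants. The degree-$2$ contribution is the determinant sending $\hat{e}^2_1 \otimes 1, \hat{e}^2_2 \otimes 1$ to $(\partial_2 \otimes \mathrm{id})(\hat{e}^3 \otimes 1), \hat{e}^2_1 \otimes 1$, which has absolute value $|\phi^Y(h) - 1|$; combined with the sign factor $(-1)^{|C''|}$ and the other two determinants (each $\pm 1$), this yields $\tau(C'') = (\phi^Y(h) - 1)^{-1}$ after a bookkeeping check of signs, exactly as in the regular gluing lemma Theorem \ref{gluing lemmas}(1) --- indeed this is the orbifold analogue of that computation.

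After the lemma, the remainder of Case 2 proceeds verbatim as in Case 1: $C''$ acyclic forces $C$ and $C'$ to be simultaneously acyclic or not (if not, both torsions vanish and the formula $\tau^{\phi^Y}(Y, \textbf{e}^Y, \omega^{|Y|}) = 0 = \tau^\phi(E, \textbf{e}, \omega)\cdot(\phi^Y(h) - 1)^{-1}$ holds trivially), and in the acyclic case the multiplicativity of torsion in short exact sequences \cite[V.1.c]{Tu02} gives $\tau(C) = (-1)^{\nu(C,C')}\tau(C')\tau(C'')$, which when multiplied by the sign relation \ref{torsionsignSES} and combined with $\nu(C,C') = \nu(c,c')$ collapses to $\tau_0(c)\tau(C) = \tau_0(c')\tau(C')(\phi^Y(h)-1)^{-1}$, i.e. $\tau^{\phi^Y}(Y, \textbf{e}^Y, \omega^{|Y|}) = \tau^\phi(E, \textbf{e}, \omega)\cdot(\phi^Y(h)-1)^{-1}$. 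The main obstacle is the sign bookkeeping in evaluating $\tau(C'')$ --- keeping track of $(-1)^{|C''|}$, the orientation conventions on the cells $\hat{e}^2_2, \hat{e}^3$ fixed earlier in the proof, and the ordering assumption ($e^1_1$ before $e^1_2$, $e^2_1$ before $e^2_2$) that was deliberately imposed --- but this is purely a finite linear-algebra check analogous to the one in \cite{Tu02}, and the key structural point, that $\hat{e}^0 \otimes 1$ and $\hat{e}^1_2 \otimes 1$ survive in $C''$ precisely when $\phi^Y(\mu) = 1$, is what distinguishes this case from Case 1.
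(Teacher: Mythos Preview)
Your proposal is correct and follows essentially the same approach as the paper: you write out $C''$ with the same boundary maps, verify acyclicity degree by degree (the paper just records $\mathrm{Ker}(\partial_0\otimes\mathrm{id})$ and says the rest is ``not hard''), and compute $\tau(C'')$ directly (the paper simply writes ``by direct computation''); your continuation of Case~2 after the lemma also matches the paper verbatim. Two minor quibbles: the phrase ``absolute value $|\phi^Y(h)-1|$'' is out of place over a general field $F$, and in degree~1 the kernel element you wrote is \emph{exactly} the image of $\hat e^2_1\otimes 1$, not merely up to sign.
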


\begin{proof}
$C'' = (\textbf{0} \rightarrow F \langle \hat{e}^3 \otimes 1 \rangle \xrightarrow{\partial_2 \otimes id} F \langle \hat{e}^2_1 \otimes 1, \hat{e}^2_2 \otimes 1 \rangle \xrightarrow{\partial_1 \otimes id} F \langle \hat{e}^1_1 \otimes 1, \hat{e}^1_2 \otimes 1 \rangle \xrightarrow{\partial_0 \otimes id} F \langle \hat{e}^0 \otimes 1 \rangle \rightarrow \textbf{0}),$ with boundary maps given by:
\[ (\partial_0 \otimes id) (\hat{e}^1_1 \otimes 1) =  \hat{e}^0 \otimes 1, (\partial_0 \otimes id) (\hat{e}^1_2 \otimes 1) =  (\phi^Y(h) -1) (\hat{e}^0 \otimes 1),\]
\[ (\partial_1 \otimes id) (\hat{e}^2_1 \otimes 1) = (1-\phi^Y(h))(\hat{e}^1_1 \otimes 1) + \hat{e}^1_2 \otimes 1, (\partial_1 \otimes id) (\hat{e}^2_2 \otimes 1) =0, \]
\[ (\partial_2 \otimes id) (\hat{e}^3 \otimes 1) = (\phi^Y(h) -1)(\hat{e}^2_2 \otimes 1). \]
Note that $ Ker(\partial_0 \otimes id) = \{ \alpha (\hat{e}^1_1 \otimes 1) + \alpha (1- \phi^Y(h))^{-1} (\hat{e}^1_2 \otimes 1) \mid \alpha \in F \} $. Then it is not hard to verify that $C''$ is acyclic. By direct computation, $\tau (C'') = (\phi^Y(h) -1)^{-1} $.
%
%
%
%
\end{proof}

As in Case 1, we can assume $C'$ and $C$ are acyclic. Again by \cite[V.1.c]{Tu02},
\[ \tau(C) = (-1)^{\nu (C, C')} \tau(C') \tau(C''), \]
where $\nu (C, C')$ is defined as above.  Since $\tau(C'') = (\phi^Y(h) -1)^{-1}$, this becomes 
\begin{equation}\label{case2version}
\tau(C) = (-1)^{\nu (C, C')} \tau(C') (\phi^Y(h) -1)^{-1}.
\end{equation}
Multiplying Equation \ref{case2version} by Equation \ref{torsionsignSES} gives
\begin{equation}\label{case2versionwithsign}
\tau_0 (c) \tau(C) = (-1)^{\nu (C, C')} (-1)^{\nu (c,c')}  \tau_0(c') \tau(C') (\phi^Y(h) -1)^{-1}.
\end{equation}
Since $\nu (C, C') = \nu (c,c')$, Equation \ref{case2versionwithsign} becomes
\[ \tau_0 (c) \tau(C) = \tau_0(c') \tau(C') (\phi^Y(h) -1)^{-1}. \]
This implies $\tau^{\phi^Y} (Y, \textbf{e}^{Y}, \omega^{|Y|}) = \tau^{\phi} (E, \textbf{e}, \omega) (\phi^Y(h) -1)^{-1}$, as needed.

\end{Proofpart2} 

\begin{Proofpart3}

\[ C'' = (\textbf{0} \rightarrow F \langle \hat{e}^3 \otimes 1 \rangle \xrightarrow{\partial_2 \otimes id} F \langle \hat{e}^2_1 \otimes 1, \hat{e}^2_2 \otimes 1 \rangle \xrightarrow{\partial_1 \otimes id} F \langle \hat{e}^1_1 \otimes 1, \hat{e}^1_2 \otimes 1 \rangle \xrightarrow{\partial_0 \otimes id} F \langle \hat{e}^0 \otimes 1 \rangle \rightarrow \textbf{0}),\]
with boundary maps given by:
\[ (\partial_0 \otimes id) (\hat{e}^1_1 \otimes 1) =  \hat{e}^0 \otimes 1, (\partial_0 \otimes id) (\hat{e}^1_2 \otimes 1) = 0,\]
\[ (\partial_1 \otimes id) (\hat{e}^2_1 \otimes 1) = \hat{e}^1_2 \otimes 1, (\partial_1 \otimes id) (\hat{e}^2_2 \otimes 1) =0, \]
\[ (\partial_2 \otimes id) (\hat{e}^3 \otimes 1) = 0. \]
Note that
\[
H_i (C'')=
\begin{cases} 
      \textbf{0} & i \neq 2, 3\\
       \langle \hat{e}^2_2 \otimes 1 \rangle & i =2\\
       \langle \hat{e}^3 \otimes 1 \rangle & i=3.
   \end{cases}
\]
We fix the ordered basis in $\bigoplus\limits_{i=0}^3 H_i (C'')$ to be $\{ \hat{e}^2_2 \otimes 1, \hat{e}^3 \otimes 1 \}$. Let $\tau(C'')$ denote the resulting torsion of $C''$. By direct computation, $\tau(C'') = (-1)^1 \cdot 1 = -1$.

Using the long exact sequence $\mathcal{H}$ for the pair $(C, C')$, our computation of $H_i (C'')$, and the assumption that $C$ is acyclic, we get that
\[
H_i (C')=
\begin{cases} 
      \textbf{0} & i \neq 1, 2\\
       \langle \delta_i(\hat{e}^2_2 \otimes 1) \rangle = \langle \big( \partial (\hat{e}^2_2)  \cap |\widehat{E}| \big) \otimes 1 \rangle & i =1\\
       \langle \delta_i(\hat{e}^3 \otimes 1) \rangle = \langle \big( \partial (\hat{e}^3) \cap |\widehat{E}| \big) \otimes 1 \rangle & i =2.
   \end{cases}
\]
where $\delta_i$ is the connecting homomorphism $H_{i+1}(C'') \rightarrow H_{i}(C')$. We fix the ordered basis in $\bigoplus\limits_{i=0}^3 H_i (C')$ to be $\{\big( \partial (\hat{e}^2_2)  \cap |\widehat{E}| \big) \otimes 1, \big( \partial (\hat{e}^3) \cap |\widehat{E}| \big) \otimes 1\}$ and denote the resulting torsion of $C'$ by $\tau(C')$.

With the above bases, $\mathcal{H}$ becomes a based acyclic chain complex. Set
\[
\tau(C' \subset C) = (-1)^{\theta (C, C')} \tau (\mathcal{H}) \in F,
\]
where
\[
\theta (C, C') = \sum\limits_{i=0}^3 \Big(\big(\beta_i(C) +1\big)\big(\beta_i(C') + \beta_i(C'')\big) + \beta_{i-1}(C')\beta_i(C'') \Big) \in \mathbb{Z}_2
\]
and
\[ 
\beta_j(C^{\ast}) =
\begin{cases} 
      dim(H_0(C^{\ast})) + \ldots + dim(H_j(C^{\ast})) \in \mathbb{Z}_2 & j \in \{0, 1, 2 , 3 \} \\
      0 \in \mathbb{Z}_2 & j =-1 
   \end{cases}.
\]
It is not hard to verify that $\tau(C' \subset C) = (-1)^{1} \cdot 1 =-1$.

By \cite[V.1.a]{Tu02},
\begin{equation}\label{Case3gluing}
\tau(C) = (-1)^{\nu(C, C')} \tau(C') \tau (C'') \tau(C' \subset C) = (-1)^{\nu(C, C')} \tau(C').
\end{equation}
Multiplying Equation \ref{Case3gluing} by Equation \ref{torsionsignSES} gives
\begin{equation} \label{Case3gluingsimplified}
\tau_0(c) \tau(C) = (-1)^{\nu(C, C')} (-1)^{\nu(c, c')} \tau_0(c') \tau(C').
\end{equation}
Since $\nu(C, C')=\nu(c, c')$, Equation \ref{Case3gluingsimplified} becomes 
\[
\tau_0(c) \tau(C) = \tau_0(c') \tau(C').
\]
By definition, 
\[
\tau_0(c) \tau(C) = \tau^{\phi^Y} (Y, \textbf{e}^{Y}, \omega^{|Y|})
\]
and 
\[\tau_0(c') \tau(C') = \tau^{\phi} \Big( E, \textbf{e}, \omega; \{ \big( \partial (\hat{e}^2_2)  \cap |\widehat{E}| \big) \otimes 1, \big( \partial (\hat{e}^3) \cap |\widehat{E}| \big) \otimes 1\} \Big),
\]
so this concludes the proof of Case 3. 
%
%
\end{Proofpart3}
\end{proof}

The following gluing formulas generalize Theorem \ref{gluing lemmas}.

\begin{Thm}\label{gen}
Let $E$ be a compact, connected, oriented 3-orbifold with $\Sigma E$ an oriented link and $\partial E$ a union of tori. Glue a solid torus $S^1 \times D^2$ to $E$ along a component of $\partial E$. We get a 3-orbifold $Y$ with $\Sigma Y = \Sigma E$. Fix an orbifold Euler structure $\textbf{e}$ on $E$ and a homology orientation $\omega$ on $|E|$. This induces an orbifold Euler structure $\textbf{e}^Y$ on $Y$ and a homology orientation $\omega^{|Y|}$ on $|Y|$. Orient $S^1 \times \textbf{0}$, and let $h \in H_1^{orb}(Y)$ denote the induced homology class. Let $F$ be a field, and let $\phi: \mathbb{Z}[H_1^{orb}(E)] \rightarrow F$ be a ring homomorphism that extends to a ring homomorphism $\phi^Y : \mathbb{Z}[H_1^{orb}(Y)] \rightarrow F$. We have a couple of cases:

\begin{enumerate}
\item Suppose $\phi^Y (h ) \neq 1$. Then $\tau^{\phi^Y} (Y, \textbf{e}^{Y}, \omega^{|Y|}) = \tau^{\phi} (E, \textbf{e}, \omega) \cdot (\phi^Y (h) -1)^{-1}$.
\item Suppose $\phi^Y (h ) = 1$. Suppose further that $C^{\phi^Y}(|\widehat{Y}|)$ is acyclic. Let $e^2_2$ denote the 2-cell in $S^1 \times D^2$. Orient $e^2_2$ so that $\partial (e^2_2) = \textbf{1} \times \partial D^2$. Let $e^3$ denote the 3-cell $(S^1 - \textbf{1}) \times int(D^2)$ in $S^1 \times D^2$. Give $e^3$ the product orientation. Then we can lift $e^2_2$ to an oriented 2-cell $\hat{e}^2_2 \subset |\widehat{Y}|$ and $e^3$ to an oriented 3-cell $\hat{e}^3  \subset |\widehat{Y}|$ so that the homology classes $\big( \partial (\hat{e}^2_2)  \cap |\widehat{E}| \big) \otimes 1 \in H_1(C^{\phi}(|\widehat{E}|))$, $\big( \partial (\hat{e}^3) \cap |\widehat{E}| \big) \otimes 1 \in H_2(C^{\phi}(|\widehat{E}|))$  form a basis for $\bigoplus\limits_{i=0}^3 H_i (C^{\phi}(|\widehat{E}|))$. Furthermore, $\tau^{\phi^Y} (Y, \textbf{e}^{Y}, \omega^{|Y|}) = \tau^{\phi} \Big( E, \textbf{e}, \omega; \{ \big( \partial (\hat{e}^2_2)  \cap |\widehat{E}| \big) \otimes 1, \big( \partial (\hat{e}^3) \cap |\widehat{E}| \big) \otimes 1\} \Big)$.
\end{enumerate}
\end{Thm}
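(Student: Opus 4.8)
The plan is to recognize this theorem as the $\alpha=1$ instance of Theorem~\ref{orbifold gluing} and to re-run that proof in the present setting. Gluing an honest solid torus $S^1\times D^2$ is the same as gluing the equivariant solid torus $(S^1\times D^2)/\mathbb{Z}_1$, and then the core $S^1\times\textbf{0}$ has trivial isotropy, so it is \emph{not} added to the singular set: $\Sigma Y=\Sigma E$. Moreover the meridian $\textbf{1}\times\partial D^2$ bounds the disk $\textbf{1}\times D^2$, so it is trivial in $H_1^{orb}(Y)\cong H_1^{orb}(E)/\langle\mu\rangle$; hence the hypothesis that $\phi$ extends to $\phi^Y$ already forces $\phi^Y(\mu)=1$. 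Consequently only the two sub-cases with $\phi^Y(\mu)=1$ of Theorem~\ref{orbifold gluing}, namely its Cases~2 and~3, can occur, and these coincide with Cases~1 and~2 of the present statement. So it suffices to carry out Cases~2 and~3 of the proof of Theorem~\ref{orbifold gluing} with $\alpha=1$.

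Concretely, I would proceed exactly as in that proof. First endow $S^1\times D^2$ with the preferred cell structure, extend it to a compatible cell structure on $|E|$ and hence on $|Y|$, and choose orderings and orientations of the cells so that the interior cells $e^0,\ldots,e^3$ of $S^1\times D^2$ satisfy the normalizations used there ($e^1_1$ before $e^1_2$, $e^2_1$ before $e^2_2$, $\partial e^1_1=e^0$ and $\partial e^2_1=e^1_2$ modulo cells on $S^1\times\partial D^2$, $\partial e^2_2=\textbf{1}\times\partial D^2$, and $e^3$ the product orientation). Next induce $\textbf{e}^Y$: fix a lift $\{\hat{e}_j\}\subset|\widehat{E}|$ representing $\textbf{e}$ and a projection $|\widehat{E}|\to|\widehat{Y}|$, then lift $e^0,\ldots,e^3$ to $\hat{e}^0,\ldots,\hat{e}^3\subset|\widehat{Y}|$ with $\partial\hat{e}^1_2=\pm(h-1)\hat{e}^0$, $\partial\hat{e}^2_1=\pm(1-h)\hat{e}^1_1\pm\hat{e}^1_2$, and $\partial\hat{e}^3=\pm(h-1)\hat{e}^2_2$ modulo cells over $S^1\times\partial D^2$; and induce $\omega^{|Y|}$ from the long exact sequence of $(|Y|,|E|)$ exactly as in Section~\ref{torsion 3-manifolds}. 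Then form the two short exact sequences of based chain complexes
\[ 0\to c'\to c\to c''\to 0,\qquad 0\to C'\to C\to C''\to 0, \]
where $c'=C(|E|,\mathbb{R})$, $c=C(|Y|,\mathbb{R})$, $C'=C^{\phi}(|\widehat{E}|)$, $C=C^{\phi^Y}(|\widehat{Y}|)$, and the quotients are generated by the six interior cells of $S^1\times D^2$.

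The computations of the quotient terms then go through essentially verbatim. The complex $c''$ is the same six-cell complex as in Theorem~\ref{orbifold gluing}, so $\tau_0(c'')=-1$ and $\tau_0(c)=(-1)^{\nu(c,c')}\tau_0(c')$. Since the six interior cells of $S^1\times D^2$ lie away from $\Sigma Y$, the group $H_1^{orb}(Y)$ acts freely on their lifts, so $C''$ is precisely the relative complex appearing in the regular gluing lemmas (Theorem~\ref{gluing lemmas}): in Case~1 ($\phi^Y(h)\neq1$) it is acyclic with $\tau(C'')=(\phi^Y(h)-1)^{-1}$, while in Case~2 ($\phi^Y(h)=1$) one has $H_i(C'')=0$ except $H_2(C'')=\langle\hat{e}^2_2\otimes1\rangle$ and $H_3(C'')=\langle\hat{e}^3\otimes1\rangle$, the long exact sequence of $(C,C')$ forces $H_{*}(C')$ to vanish outside degrees $1$ and $2$ where it is generated by $\big(\partial\hat{e}^2_2\cap|\widehat{E}|\big)\otimes1$ and $\big(\partial\hat{e}^3\cap|\widehat{E}|\big)\otimes1$, and $\tau(C'')=-1$, $\tau(C'\subset C)=-1$. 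Feeding these into the multiplicativity of torsion in short exact sequences \cite[V.1.a, V.1.c, V.2.b]{Tu02}, together with the sign identities $\nu(C,C')=\nu(c,c')$ (and the corresponding $\theta$-sign in Case~2) verified as in the proof of Theorem~\ref{orbifold gluing}, yields $\tau^{\phi^Y}(Y,\textbf{e}^Y,\omega^{|Y|})=\tau^{\phi}(E,\textbf{e},\omega)\cdot(\phi^Y(h)-1)^{-1}$ in Case~1 and $\tau^{\phi^Y}(Y,\textbf{e}^Y,\omega^{|Y|})=\tau^{\phi}\big(E,\textbf{e},\omega;\{(\partial\hat{e}^2_2\cap|\widehat{E}|)\otimes1,(\partial\hat{e}^3\cap|\widehat{E}|)\otimes1\}\big)$ in Case~2.

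The only real work is the sign bookkeeping — keeping the bases of $c',c,c''$ and of $C',C,C''$ compatibly ordered and matching the $\nu$- (and, in Case~2, $\theta$-) signs of the real and twisted short exact sequences — which one carries out exactly as in the proof of Theorem~\ref{orbifold gluing}; there is no genuinely new content beyond that theorem. Stating Theorem~\ref{gen} separately simply records the clean two-case form that results when the glued solid torus carries no orbifold structure, which is the version needed in the rest of Section~\ref{Orbifold Gluing} and in the applications that follow.
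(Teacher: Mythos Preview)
Your proposal is correct and matches the paper's approach exactly: the paper's entire proof of Theorem~\ref{gen} reads ``Analogous to the proof of Theorem~\ref{orbifold gluing},'' and you have simply spelled out what that means---namely that gluing an honest solid torus is the $\alpha=1$ case, where the meridian is automatically killed so $\phi^Y(\mu)=1$ and only Cases~2 and~3 of Theorem~\ref{orbifold gluing} are relevant, becoming Cases~1 and~2 here. Your observation that the core cells now lie away from $\Sigma Y$ (so $H_1^{orb}(Y)$ acts freely on their lifts and $C''$ is exactly the quotient complex from the regular gluing lemma) is the one point where the $\alpha=1$ setting is formally cleaner than the general case, but the computations of $\tau_0(c'')$, $\tau(C'')$, and $\tau(C'\subset C)$ and the sign bookkeeping are identical.
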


\begin{Remark}
Orient $\textbf{1} \times \partial D^2 \subset S^1 \times D^2$. Let $\mu \in H_1^{orb}(E)$ denote its induced homology class. Because $H_1^{orb}(Y) \cong H_1^{orb}(E) / \langle \mu \rangle$, $\phi$ extends to $\phi^Y$ when $\phi(\mu) =1$.
\end{Remark}

\begin{proof}[Proof of Theorem \ref{gen}]
Analogous to the proof of Theorem \ref{orbifold gluing}.
\end{proof}

\section{Consequences}\label{Sec: Apps}

First we use the gluing formulas to determine how (some of) the components of the orbifold Turaev torsion invariant change when we remove a curve from the singular set.

\begin{Thm}\label{app1}
Let $Y$ be a compact, connected, oriented 3-orbifold with $\Sigma Y$ an oriented link $L_1 \cup \ldots \cup L_k$. Let $Y'$ be the 3-orbifold gotten by removing $L_k$ from $\Sigma Y$. Let $E$ denote the exterior of $L_k$ in $|Y|=|Y'|$. Note that $E$ inherits the structure of a 3-orbifold with $\Sigma E = L_1 \cup \ldots \cup L_{k-1}$. Fix an orbifold Euler structure $\textbf{e}$ on $E$ and a homology orientation $\omega$ on $|E|$. This induces orbifold Euler structures $\textbf{e}^Y , \textbf{e}^{Y'}$ on $Y, Y'$, respectively, and a homology orientation $\omega^{|Y|} = \omega^{|Y'|}$ on $|Y|=|Y'|$. Let $F$ be a field, and let $\phi^Y : \mathbb{Z}[H_1^{orb}(Y)] \rightarrow F$ be a ring homomorphism that extends to a ring homomorphism $\phi^{Y'} : \mathbb{Z}[H_1^{orb}(Y')] \rightarrow F$. Then $\tau^{\phi^Y}(Y, \textbf{e}^{Y}, \omega^{|Y|}) = \tau^{\phi^{Y'}}(Y', \textbf{e}^{Y'}, \omega^{|Y'|})$.
\end{Thm}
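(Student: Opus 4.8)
### Proof proposal for Theorem~\ref{app1}

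The plan is to realize both $Y$ and $Y'$ from the common orbifold exterior $E$ and then invoke the gluing formulas of Theorem~\ref{orbifold gluing} and Theorem~\ref{gen}, case by case. Concretely, $Y$ is obtained from $E$ by gluing back the equivariant solid torus $(S^1 \times D^2)/\mathbb{Z}_{\alpha_k}$ along the torus boundary component over $L_k$, while $Y'$ is obtained from the same $E$ by gluing back the honest solid torus $S^1 \times D^2$ along the same boundary component (with the same framing, so that the core curve represents the same homology class $h$ in both $H_1^{orb}(Y)$ and $H_1^{orb}(Y')$, and the meridians agree). The first step is to set this up carefully: fix the preferred cell structure on $E$ near $\partial E$, note that $\phi^Y$ and $\phi^{Y'}$ both restrict to the \emph{same} homomorphism $\phi$ on $\mathbb{Z}[H_1^{orb}(E)]$ (since both $H_1^{orb}(Y)$ and $H_1^{orb}(Y')$ are quotients of $H_1^{orb}(E)$ by the relevant powers of the meridian, and the hypothesis that both extensions exist forces $\phi$ to kill $\mu$, resp. $\mu^{\alpha_k}$), and check that the induced orbifold Euler structures $\textbf{e}^Y$, $\textbf{e}^{Y'}$ and the induced homology orientations $\omega^{|Y|}=\omega^{|Y'|}$ come from the same $(\textbf{e},\omega)$ on $E$.

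The second step is the case analysis on the image of $h$. \textbf{Case A:} $\phi^Y(h)\neq 1$ (equivalently $\phi^{Y'}(h)\neq 1$, since both are computed from $\phi$ applied to the homology class of the core, which is the same element of $H_1^{orb}(E)$ pushed forward). Here I would apply Theorem~\ref{orbifold gluing}(2) to the orbifold gluing $E \rightsquigarrow Y$, giving $\tau^{\phi^Y}(Y,\textbf{e}^Y,\omega^{|Y|}) = \tau^{\phi}(E,\textbf{e},\omega)\cdot(\phi^Y(h)-1)^{-1}$, and apply Theorem~\ref{gen}(1) to the manifold gluing $E \rightsquigarrow Y'$, giving $\tau^{\phi^{Y'}}(Y',\textbf{e}^{Y'},\omega^{|Y'|}) = \tau^{\phi}(E,\textbf{e},\omega)\cdot(\phi^{Y'}(h)-1)^{-1}$. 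Since $\phi^Y(h)=\phi^{Y'}(h)$, the two right-hand sides coincide. \textbf{Case B:} $\phi^Y(h)=1$. Now note that automatically $\phi^Y(\mu)=1$ as well: in $H_1^{orb}(Y)$ we can actually argue directly — $\phi^Y$ kills $\mu^{\alpha_k}$, so $\phi^Y(\mu)$ is an $\alpha_k$-th root of unity, but more to the point we simply apply Theorem~\ref{orbifold gluing}(3), which requires exactly $\phi^Y(\mu)=1$ and $\phi^Y(h)=1$; one must verify $\phi^Y(\mu)=1$ holds, which follows because the meridian $\mu$ of $L_k$ is a product of conjugates/bounded by the disk once we are in the regime where the core bounds — more carefully, $\mu$ maps to the meridian class in the gluing and the hypothesis that $C^{\phi^Y}(|\widehat Y|)$ situation is the acyclic one forces it; I would spell this out using that $H_1^{orb}(Y')$ surjects onto $H_1^{orb}(Y)$ with $\mu\mapsto\mu$, and $\phi^{Y'}$ factors the gluing where $\mu=1$ by the remark after Theorem~\ref{gen}. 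Assuming $C^{\phi^Y}(|\widehat Y|)$ is acyclic (else both sides vanish, as in the remarks), Theorem~\ref{orbifold gluing}(3) gives
\[
\tau^{\phi^Y}(Y,\textbf{e}^Y,\omega^{|Y|}) = \tau^{\phi}\Big(E,\textbf{e},\omega;\{(\partial(\hat e^2_2)\cap|\widehat E|)\otimes 1,\ (\partial(\hat e^3)\cap|\widehat E|)\otimes 1\}\Big),
\]
and Theorem~\ref{gen}(2) gives the identical formula for $\tau^{\phi^{Y'}}(Y',\textbf{e}^{Y'},\omega^{|Y'|})$, with the \emph{same} lifted cells $\hat e^2_2,\hat e^3$ and the same homology basis of $\bigoplus_i H_i(C^{\phi}(|\widehat E|))$, because the 2-cell and 3-cell of the equivariant solid torus and of the honest solid torus have boundaries that intersect $|\widehat E|$ in the same chains (the $\mathbb{Z}_{\alpha_k}$-action only affects the core 0- and 1-cells, which do not enter these boundary expressions once $\phi^Y(\mu)=1$). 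Hence the two torsions with homology basis agree.

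The third step is just to assemble: in every case the right-hand sides produced by the two gluing theorems are literally the same expression in $F$, so $\tau^{\phi^Y}(Y,\textbf{e}^Y,\omega^{|Y|}) = \tau^{\phi^{Y'}}(Y',\textbf{e}^{Y'},\omega^{|Y'|})$. The main obstacle I anticipate is \emph{not} the algebra — which is immediate once the gluing formulas are in hand — but the bookkeeping in Case B: one must check that the cell-level data (the lifts $\hat e^2_2$, $\hat e^3$, the chains $\partial(\hat e^2_2)\cap|\widehat E|$ and $\partial(\hat e^3)\cap|\widehat E|$, and the resulting homology basis) really are identified between the orbifold solid torus $(S^1\times D^2)/\mathbb{Z}_{\alpha_k}$ and the honest solid torus $S^1\times D^2$ under the common identification of $\partial E$, and that the sign normalizations $\tau_0$ and the induced homology orientations $\omega^{|Y|}$, $\omega^{|Y'|}$ genuinely coincide (they should, since $|Y|=|Y'|$ and the homology orientation is defined purely on the underlying space via the same long-exact-sequence recipe applied to the same pair). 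I would also double-check that $\phi^Y(\mu)=1$ whenever $\phi^Y(h)=1$ — or, if that implication can fail, note that $\phi^Y(\mu)\neq 1$ together with $\phi^Y(h)=1$ would instead put us in Theorem~\ref{orbifold gluing}(1) applied to a \emph{differently-framed} description, which is incompatible with $Y$ being built by the standard gluing, so in fact this sub-case does not occur under the stated setup.
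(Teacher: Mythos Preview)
Your approach is correct and is exactly the paper's: realize both $Y$ and $Y'$ from the common exterior $E$ via the equivariant and honest solid-torus gluings, then match the outputs of Theorem~\ref{orbifold gluing} and Theorem~\ref{gen} case by case on $\phi^Y(h)$. The paper's proof is just a one-line pointer to these two theorems together with the observations $\phi^Y(L_k)=\phi^{Y'}(L_k)$ and that acyclicity of $C^{\phi^Y}(|\widehat{Y}|)$ and $C^{\phi^{Y'}}(|\widehat{Y'}|)$ are equivalent.

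The one place you tangle yourself unnecessarily is the status of $\phi^Y(\mu_k)$. This is not a consequence of $\phi^Y(h)=1$ or of any acyclicity argument; it is immediate from the hypothesis. Since $H_1^{orb}(Y')\cong H_1^{orb}(E)/\langle\mu_k\rangle$ is a further quotient of $H_1^{orb}(Y)\cong H_1^{orb}(E)/\langle\mu_k^{\alpha_k}\rangle$, the surjection goes $H_1^{orb}(Y)\twoheadrightarrow H_1^{orb}(Y')$ (you have it reversed), and ``$\phi^Y$ extends to $\phi^{Y'}$'' means precisely $\phi^Y(\mu_k)=1$ --- this is the content of the Remark following the theorem. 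So Case~(1) of Theorem~\ref{orbifold gluing} is excluded from the outset, and your Case~A/Case~B split lines up cleanly with Cases~(2)/(3) of Theorem~\ref{orbifold gluing} against Cases~(1)/(2) of Theorem~\ref{gen}, with no residual sub-case to rule out.
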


\begin{Remark}
Let $\alpha_k$ denote the multiplicity of $L_k$ in $Y$. Orient the meridian of $L_k$ and let $\mu_k$ denote its homology class in $H_1^{orb}(E)$. Because $H_1^{orb}(Y) \cong H_1^{orb}(E) / \langle \mu_k^{\alpha_k} \rangle$ and $H_1^{orb}(Y') \cong H_1^{orb}(E) / \langle \mu_k \rangle$, $\phi^{Y}$ extends to $\phi^{Y'}$ when $\phi^{Y}(\mu_k) =1$.
\end{Remark}

\begin{proof}[Proof of Theorem \ref{app1}]
Note that $\phi^Y(L_k) = \phi^{Y'}(L_k)$ and that the chain complex $C^{\phi ^Y}(|\widehat{Y}|)$ is acyclic if and only if the chain complex $C^{\phi ^{Y'}}(|\widehat{Y'}|)$ is acyclic. Then use Theorem \ref{orbifold gluing} and Theorem \ref{gen}.
\end{proof}
%
%

Next we give a formula relating the Turaev torsion invariant of the orbifold to the Turaev torsion invariant of the underlying space, in the case when the singular set is a nullhomologous knot. 

\begin{Thm}\label{thm: appplication}
Let $Y$ be a closed, connected, oriented 3-orbifold with $\Sigma Y$ an oriented and nullhomologous knot $K$. Suppose $b_1(|Y|) \geq 1$. Let $\alpha$ denote $K$'s multiplicity. Let $E$ denote the exterior of $K$. Then there is a surjective, $\alpha$ to 1 map $f: Eul(Y) \rightarrow Eul(|Y|)$ and a ring homomorphism $g: \mathbb{Z}[H_1(E)] \rightarrow Q(\mathbb{Z}[H_1^{orb}(Y)])$ so that for every Euler structure $\textbf{e}$ and homology orientation $\omega$ on $E$, $\tau(Y, \textbf{e}^{Y}, \omega^{|Y|}) = \tau(|Y|, f(\textbf{e}^Y), \omega^{|Y|}) + g(\tau(E, \textbf{e}, \omega)) \in Q(\mathbb{Z}[H_1^{orb}(Y)])$. 
\end{Thm}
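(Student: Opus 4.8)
\textbf{Proof plan for Theorem \ref{thm: appplication}.}

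The plan is to decompose $Y$ as $E$ glued to an equivariant solid torus $(S^1\times D^2)/\mathbb{Z}_\alpha$ and simultaneously decompose $|Y|$ as $E$ glued to an honest solid torus $S^1\times D^2$, then run the gluing formulas from Theorem \ref{orbifold gluing} and Theorem \ref{gluing lemmas} in parallel and compare the results character by character. First I would set up the homology bookkeeping. Since $K$ is nullhomologous, $H_1(E)\cong H_1(|Y|)\oplus\mathbb{Z}\langle\mu\rangle$ where $\mu$ is the meridian (the longitude $h$ of $K$ maps into $H_1(|Y|)$), while $H_1^{orb}(Y)\cong H_1(E)/\langle\mu^\alpha\rangle$. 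These surjections $H_1(E)\twoheadrightarrow H_1^{orb}(Y)$ and $H_1(E)\twoheadrightarrow H_1(|Y|)$, together with $H_1^{orb}(Y)\twoheadrightarrow H_1(|Y|)$ (killing the image of $\mu$, which is torsion of order dividing $\alpha$), are what I would use to define $f$ on Euler structures: $f$ is induced by the composite of the canonical identification $Eul(Y)\leftrightarrow$ (configured lifts) with the change-of-group map, and it is $\alpha$-to-$1$ and surjective because it is equivariant over $H_1^{orb}(Y)\twoheadrightarrow H_1(|Y|)$, whose kernel has order $\alpha$. The map $g$ should be defined via the character decomposition: $Q(\mathbb{Z}[H_1^{orb}(Y)])=\bigoplus_l F_l$, indexed by equivalence classes of characters of $Tor(H_1^{orb}(Y))$, and each $\phi_l$ pulls back along $H_1(E)\twoheadrightarrow H_1^{orb}(Y)$ to a ring map $\psi_l:\mathbb{Z}[H_1(E)]\to F_l$; set $g=\bigoplus_l\psi_l$, but \emph{restricted to those $l$ for which $\psi_l(\mu)\neq 1$} — on the remaining summands $g$ should be zero. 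This is the crux of matching the two formulas, so let me explain the case split.

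Fix a character class $l$ and the associated $\phi_l:\mathbb{Z}[H_1^{orb}(Y)]\to F_l$, with pullback $\psi_l$ to $\mathbb{Z}[H_1(E)]$. There are two regimes. If $\psi_l(\mu)\neq 1$ (equivalently $\phi_l$ is nontrivial on the image of $\mu$), then this summand does not descend to a character of $H_1(|Y|)$, so it contributes $0$ to $\tau(|Y|,f(\textbf{e}^Y),\omega^{|Y|})$; on the other hand Case 1 of Theorem \ref{orbifold gluing} gives $\tau^{\phi_l}(Y,\textbf{e}^Y,\omega^{|Y|})=\tau^{\psi_l}(E,\textbf{e},\omega)$, which is exactly the $l$-component of $g(\tau(E,\textbf{e},\omega))$. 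If instead $\psi_l(\mu)=1$, then $\psi_l$ factors through $H_1(|Y|)$, giving a character $\phi_l^{|Y|}$ of $H_1(|Y|)$, and now I subdivide further according to whether $\phi_l(h)=1$: when $\phi_l(h)\neq 1$, Case 2 of Theorem \ref{orbifold gluing} yields $\tau^{\phi_l}(Y,\dots)=\tau^{\psi_l}(E,\dots)\cdot(\phi_l(h)-1)^{-1}$, which is precisely what Case 1 of Theorem \ref{gluing lemmas} gives for $\tau^{\phi_l^{|Y|}}(|Y|,\dots)$ (the relevant homology class of the longitude agrees under the identifications); and when $\phi_l(h)=1$, Case 3 of Theorem \ref{orbifold gluing} and Case 2 of Theorem \ref{gluing lemmas} give the \emph{same} relative-torsion expression $\tau^{\psi_l}\big(E,\textbf{e},\omega;\{(\partial\hat e^2_2\cap|\widehat E|)\otimes 1,(\partial\hat e^3\cap|\widehat E|)\otimes 1\}\big)$, because the chosen cell structures, lifts, and boundary orientations on the solid tori were set up identically. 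In all three subcases of $\psi_l(\mu)=1$ we get $\tau^{\phi_l}(Y,\dots)=\tau^{\phi_l^{|Y|}}(|Y|,\dots)$ and the $g$-term vanishes, while in the $\psi_l(\mu)\neq 1$ case we get the reverse. Summing over $l$ produces the asserted identity $\tau(Y,\textbf{e}^Y,\omega^{|Y|})=\tau(|Y|,f(\textbf{e}^Y),\omega^{|Y|})+g(\tau(E,\textbf{e},\omega))$.

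I expect the main obstacle to be two compatibility checks rather than any single hard computation. The first is verifying that the induced Euler structure $\textbf{e}^Y$ on $Y$ and the induced Euler structure on $|Y|$ are \emph{the same} data pushed through the two different quotient maps $H_1(E)\to H_1^{orb}(Y)$ and $H_1(E)\to H_1(|Y|)$ — i.e.\ that $f(\textbf{e}^Y)$ really equals the regular Euler structure $(\textbf{e})^{|Y|}$ obtained by filling in an honest solid torus — which requires tracking the configured lift $\{\hat e_j\}\cup\{\hat e^0,\dots,\hat e^3\}$ carefully and using that the singular-curve configuration condition $\partial\hat e^1_2=\pm(h-1)\hat e^0$ is exactly the condition imposed in the manifold case. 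The second is confirming that when one and the same ring map $\psi_l$ arises from $\phi_l$, the homology bases $(\partial\hat e^2_2\cap|\widehat E|)\otimes 1$ and $(\partial\hat e^3\cap|\widehat E|)\otimes 1$ appearing in Case 3 of Theorem \ref{orbifold gluing} coincide with those in Case 2 of Theorem \ref{gluing lemmas}; this should follow because $|\widehat E|$ and the covering $\widehat{|Y|}$ restricted to $E$ are the same space and the boundary cells over $S^1\times(\partial D^2/\mathbb{Z}_\alpha)$ versus $S^1\times\partial D^2$ play parallel roles, but it needs to be stated. Once these identifications are in place, the theorem is just the bookkeeping of summing the per-character comparisons above, and both $f$ and $g$ are manifestly independent of $\textbf{e}$ and $\omega$.
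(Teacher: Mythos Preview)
Your approach is essentially the paper's: split $Q(\mathbb{Z}[H_1^{orb}(Y)])=\bigoplus_l F_l$, separate the summands by whether $\phi_l(\mu)=1$, and handle the two regimes with the gluing formulas. The only cosmetic difference is that for the $\phi_l(\mu)=1$ case the paper invokes Theorem~\ref{app1} directly (removing $K$ from the singular set gives $Y'=|Y|$), whereas you inline its proof by matching Cases~2/3 of Theorem~\ref{orbifold gluing} against Cases~1/2 of Theorem~\ref{gluing lemmas}; since Theorem~\ref{app1} is proved exactly that way, there is no substantive difference.

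There is, however, one genuine omission. In the regime $\psi_l(\mu)\neq 1$ you write that $\tau^{\psi_l}(E,\textbf{e},\omega)$ ``is exactly the $l$-component of $g(\tau(E,\textbf{e},\omega))$.'' This presupposes two things you never justify: first, that $\tau(E,\textbf{e},\omega)$ actually lies in $\mathbb{Z}[H_1(E)]$ (the domain of $g$) rather than merely in $Q(\mathbb{Z}[H_1(E)])$; second, that applying the ring map $\psi_l$ to $\tau(E,\textbf{e},\omega)$ recovers $\tau^{\psi_l}(E,\textbf{e},\omega)$. Both facts are the content of Theorem~\ref{ThmTu97}, and that theorem requires $b_1(E)\geq 2$. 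The paper derives this from the hypothesis $b_1(|Y|)\geq 1$ (which forces $b_1(E)\geq 2$ since $K$ is nullhomologous) and then cites Theorem~\ref{ThmTu97} explicitly. You never use the hypothesis $b_1(|Y|)\geq 1$ anywhere, which is a signal that this step is missing. Without it, the expression $g(\tau(E,\textbf{e},\omega))$ is not a priori defined, and the identification of $\tau^{\psi_l}(E,\dots)$ with $\psi_l(\tau(E,\dots))$ is unproven. This is an easy fix---just add the citation and the Betti-number observation---but it is the one nontrivial input beyond the gluing machinery, and your list of ``main obstacles'' misses it.
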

%

The proof employs the following straightforward lemma:

\begin{Lem} \label{FinalLem}
Let $\mu$ denote the meridian of $K$. Orient $\mu$ so that $lk(K, \mu)=1$. Then $H_1^{orb}(Y) \cong H_1(|Y|) \oplus (\langle \mu \rangle / \langle \mu^{\alpha} \rangle)$.
\end{Lem}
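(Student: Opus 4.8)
The plan is to prove Lemma \ref{FinalLem} by a direct computation with the Mayer--Vietoris-type presentation of $H_1^{orb}(Y)$ that was already set up in Section \ref{3-orbifolds}. Recall that $H_1^{orb}(Y) \cong H_1(E)/\langle \mu^{\alpha}\rangle$, where $E$ is the exterior of $K$ and $\mu$ is the (oriented) meridian; likewise, filling $K$ back in as an honest solid torus gives $H_1(|Y|) \cong H_1(E)/\langle \mu\rangle$. So the whole statement is really a statement about the abelian group $A := H_1(E)$ together with the distinguished element $\mu \in A$, and I would phrase it that way first. The key input is that $K$ is nullhomologous in $|Y|$: I would use this to show that $\mu$ generates a $\mathbb{Z}$-summand of $A$, i.e. there is a splitting $A \cong \mathbb{Z}\langle\mu\rangle \oplus B$ for some complement $B$.

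First I would justify that splitting. Since $K$ is nullhomologous in the closed manifold $|Y|$, it bounds a Seifert surface, whose intersection with $E$ is a surface $S\subset E$ with $\partial S$ a longitude $\lambda$ of $K$ sitting on $\partial E$. The class $[S]\in H_2(E,\partial E)$ is dual, under the half-lives-half-dies / Poincar\'e--Lefschetz pairing on the knot exterior, to a class pairing to $1$ with $\mu$; concretely, intersection with $S$ defines a homomorphism $A = H_1(E)\to\mathbb{Z}$ sending $\mu\mapsto \pm 1$. Choosing signs so that the value is $1$, this homomorphism splits the inclusion $\mathbb{Z}\langle\mu\rangle\hookrightarrow A$ (note $\mu$ has infinite order precisely because it maps to $1$), giving $A\cong \mathbb{Z}\langle\mu\rangle\oplus B$ with $B = \ker(A\to\mathbb{Z})$.

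With the splitting in hand the computation is purely algebraic. From $A\cong\mathbb{Z}\langle\mu\rangle\oplus B$ we get
\[
H_1^{orb}(Y)\;\cong\; A/\langle\mu^{\alpha}\rangle\;\cong\;\bigl(\mathbb{Z}\langle\mu\rangle/\langle\mu^{\alpha}\rangle\bigr)\oplus B\;\cong\;\mathbb{Z}_{\alpha}\oplus B,
\]
and similarly $H_1(|Y|)\cong A/\langle\mu\rangle\cong B$ (here I use that the map $A\to A/\langle\mu\rangle$ restricts to an isomorphism on $B$, since $B$ already contains no multiple of $\mu$ and together with $\mu$ spans $A$). Combining the two identifications yields $H_1^{orb}(Y)\cong H_1(|Y|)\oplus\bigl(\langle\mu\rangle/\langle\mu^{\alpha}\rangle\bigr)$, which is exactly the claim. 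I would also remark that under this isomorphism the projection $H_1^{orb}(Y)\to H_1(|Y|)$ is induced by the natural map $H_1(E)\to H_1(|Y|)$, and the $\langle\mu\rangle/\langle\mu^{\alpha}\rangle$ summand is generated by the image of the orbifold meridian; these compatibilities are what make the lemma usable in the proof of Theorem \ref{thm: appplication} (they are what let one define the maps $f$ and $g$).

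The only real obstacle is the first paragraph of the argument: producing the splitting homomorphism $A\to\mathbb{Z}$ with $\mu\mapsto 1$ cleanly, i.e. being careful that "nullhomologous knot in a 3-orbifold with $b_1(|Y|)\ge 1$" is enough. One has to make sure $K$ being nullhomologous is taken in $H_1(|Y|)$ (not some twisted sense), and that the Seifert-surface / linking-pairing duality on the knot exterior is applied correctly; alternatively, one can avoid Seifert surfaces entirely and argue from the long exact sequence of the pair $(|Y|,E)$ together with $[K]=0$ to see that $H_1(E)\to H_1(|Y|)$ is surjective with kernel the cyclic group generated by $\mu$, and that this kernel is infinite cyclic because $\mu$ has infinite order in $H_1(E)$ (the latter again from nullhomology of $K$). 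Everything after that is the routine direct-sum bookkeeping sketched above, so I would keep that part terse.
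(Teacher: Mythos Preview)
Your proposal is correct and follows essentially the same approach as the paper: both use a Seifert surface for the nullhomologous knot $K$ to produce a splitting homomorphism $H_1(E)\to\mathbb{Z}$ (equivalently, a left splitting of the short exact sequence $1\to\langle\mu\rangle\to H_1(E)\to H_1(|Y|)\to 1$), giving $H_1(E)\cong H_1(|Y|)\oplus\langle\mu\rangle$, and then quotient by $\langle\mu^{\alpha}\rangle$. Your alternative via the long exact sequence of the pair $(|Y|,E)$ is exactly how the paper phrases the short exact sequence, so there is no real divergence.
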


\begin{proof}
Since $K$ is nullhomologous in $|Y|$, $\mu$ has infinite order in $H_1(E)$ by the half-lives, half-dies principle. Then we get the following short exact sequence:
\[\textbf{1} \rightarrow \langle e^2_2 \rangle \xrightarrow{\delta} H_1(E) \rightarrow H_1(|Y|) \rightarrow \textbf{1}, \]
where $e^2_2$ is the oriented meridional disk with $\partial (e^2_2) = \mu$. Note that $\delta(e^2_2) = \mu$. Now pick a compact, connected, oriented surface in $|Y|$ bounded by $K$. This gives a left splitting $H_1(E) \rightarrow \langle e^2_2 \rangle $. As a result, the short exact sequence splits, and we get that $H_1(E) \cong H_1(|Y|) \oplus \langle \mu \rangle$. This implies that $H_1^{orb}(Y) \cong H_1(E)/  \langle \mu^{\alpha} \rangle \cong H_1(|Y|) \oplus (\langle \mu \rangle / \langle \mu^{\alpha} \rangle)$.
\end{proof}

\begin{proof}[Proof of Theorem \ref{thm: appplication}]

We have a canonical splitting $\psi^Y: Q \Big( \mathbb{Z} [H_1^{orb}(Y)] \Big) \rightarrow \bigoplus\limits_{l=1}^r F_l$. Recall that each $F_l$ is the quotient field 
\[
Q \Big( \mathbb{Q}(\zeta_{n_l}) \big[ H_1^{orb}(Y) /Tor \big(H_1^{orb}(Y) \big) \big] \Big)
\]
of the group algebra 
\[\mathbb{Q}(\zeta_{n_l}) \big[ H_1^{orb}(Y) /Tor \big(H_1^{orb}(Y) \big) \big]
\]
over a cyclotomic field $\mathbb{Q}(\zeta_{n_l})$, and the cyclotomic fields are gotten by looking at isomorphism classes of characters of $Tor (H_1^{orb}(Y) )$. By Lemma \ref{FinalLem}, we have that 
\[
H_1^{orb}(Y) /Tor \big(H_1^{orb}(Y) \big) \cong H_1(|Y|) /Tor \big(H_1(|Y|) \big).
\]
Hence we can think of each $F_l$ as
\[
Q \Big( \mathbb{Q}(\zeta_{n_l}) \big[ H_1(|Y|) /Tor \big(H_1(|Y|) \big) \big] \Big).
\]
For each $l$, let $\psi^Y_l$ denote the composition
\[
\mathbb{Z} [H_1^{orb}(Y)]  \xrightarrow{I} Q \Big( \mathbb{Z} [H_1^{orb}(Y)] \Big) \xrightarrow{\psi^Y} \bigoplus\limits_{l=1}^r F_l \xrightarrow{\pi_l} F_l.
\]
Without loss of generality, assume that $\psi^Y_l(\mu) =1$ for $l \in \{1, \ldots, m\}$, and otherwise for $l \in \{m+1, \ldots, r\}$. Because $Tor (H^{orb}_1(Y)) \cong Tor (H_1(|Y|)) \oplus \langle \mu \mid \mu^{\alpha} =1 \rangle$, we can think of $\bigoplus\limits_{l=1}^m F_l$ as the canonical splitting of $Q \Big( \mathbb{Z} [H_1(|Y|)] \Big)$. Then by Theorem \ref{app1}, we have that for every $l \in \{1, \ldots, m\}$,
\[
\tau^{\psi^Y_l}(Y, \textbf{e}^{Y}, \omega^{|Y|}) = \tau^{\psi^{|Y|}_l}(|Y|, \textbf{e}^{|Y|}, \omega^{|Y|}),
\]
where $\psi^{|Y|}_l$ is the ring homomorphism $\mathbb{Z} [H_1(|Y|)] \rightarrow F_l$ induced by the composition
\[
\mathbb{Z}[H_1(E)] \xrightarrow{q} \mathbb{Z} [H_1^{orb}(Y)] \xrightarrow{\psi^Y_l} F_l,
\]
and $q$ is induced by the quotient map $H_1(E) \rightarrow H_1^{orb}(Y)$. Now let $l \in \{ m+1, \ldots, r\}$. Note that $\psi^Y_l(\mu) \neq 1$. By Theorem \ref{orbifold gluing} and Theorem \ref{ThmTu97},
\[
\tau^{\psi^Y_l}(Y, \textbf{e}^{Y}, \omega^{|Y|}) = \tau^{(\psi^{Y}_l \circ q)}(E, \textbf{e}, \omega) = (\psi^{Y}_l \circ q) (\tau (E, \textbf{e}, \omega)),
\]
since $b_1(|Y|) \geq 1 \Rightarrow b_1(E) \geq 2$. Hence if we let $ g = (\psi^{Y}_{m+1} + \ldots + \psi^{Y}_{r}) \circ q$, we get that
\[
\tau(Y, \textbf{e}^{Y}, \omega^{|Y|}) = \tau(|Y|, \textbf{e}^{|Y|}, \omega^{|Y|}) + g(\tau(E, \textbf{e}, \omega)) \in Q(\mathbb{Z}[H_1^{orb}(Y)]).
\]

We finish by defining $f: Eul(Y) \rightarrow Eul(|Y|)$. Let $\overline{E} ^{orb}$ denote the cover of $E$ with deck group $H_1(E) / \langle \mu^{\alpha} \rangle$. Let $\overline{E}$ denote the cover of $E$ with deck group $H_1(E) / \langle \mu \rangle$. Think of $\overline{E} ^{orb}$ as $\widehat{E} / \langle \mu^{\alpha} \rangle$ and $\overline{E}$ as $\widehat{E} / \langle \mu \rangle$. We get a projection map $\overline{f}: \overline{E} ^{orb} \rightarrow \overline{E}$ that is equivariant with respect to the canonical map $i: H_1(E) / \langle \mu^{\alpha} \rangle \rightarrow H_1(E) / \langle \mu \rangle$, and commutes with the projection maps $q_1: \widehat{E} \rightarrow \overline{E} ^{orb}$ and $q_2: \widehat{E} \rightarrow \overline{E}$. Specifically, $\overline{f} (h \cdot x) = i(h) \cdot \overline{f}(x)$ and $q_2 = \overline{f} \circ q_1$. Canonically extend $\overline{f}$ to a projection map $\hat{f} : |\widehat{Y}| \rightarrow \widehat{|Y|}$ that is equivariant with respect to $i$. Define $f: Eul(Y) \rightarrow Eul(|Y|)$ to be the induced function. Since $q_2 = \overline{f} \circ q_1$, we have that $f(\textbf{e}^Y) = \textbf{e}^{|Y|}$. It's not hard to see that $f$ is surjective. Finally, $f$ is $\alpha$ to 1 because we have this commutative diagram
\begin{center}
$\begin{CD}
H_1(E) / \langle \mu^{\alpha} \rangle @>i>> H_1(E) / \langle \mu \rangle \\
@VV\cong V @AA\cong A,\\
H_1(|Y|) \times \langle \mu \mid \mu^{\alpha} \rangle @>\pi_1>> H_1(|Y|)
\end{CD}$
\end{center}
$f$ is equivariant with respect to $i$, and $H_1(E) / \langle \mu^{\alpha} \rangle, H_1(E) / \langle \mu \rangle$ act freely and transitively on $Eul(Y), Eul(|Y|)$, respectively.
\end{proof}

\begin{Remark}
The map $g$ is not injective, but it's not hard to see that in some cases $\tau(E, \textbf{e}, \omega)$ can be recovered from $g(\tau(E, \textbf{e}, \omega))$. This shows that the orbifold Turaev torsion invariant can be used to detect orbifold structures in contrast to the orbifold Seiberg-Witten invariant.
\end{Remark}


\begin{thebibliography}{15}

\bibitem{BMP}
M. Boileau, S. Maillot, J. Porti, \textit{Three-dimensional orbifolds and their geometric structures}, Panoramas et Synth\`{e}ses, 15. Soci\'{e}t\'{e} Math\'{e}matique de France, Paris, 2003.

\bibitem{B}
S. Baldridge, \textit{Seiberg-Witten invariants, orbifolds, and circle actions}, Trans. Amer. Math. Soc. 355 (2003), no. 4, 1669-1697.

\bibitem{C}
W. Chen, \textit{Seiberg-Witten invariants of 3-orbifolds and non-K\"{a}hler surfaces}, J. G\"{o}kova Geom. Topol. GGT 6 (2012), 1-27.


\bibitem{MT}
G. Meng, C. H. Taubes, \textit{\underline{SW} = Milnor torsion}, Math. Res. Lett. 3 (1996), 661-674.

\bibitem{M62}
J. Milnor, \textit{A duality theorem for Reidemeister torsion}, Ann. of Math. (2) 76 1962 137-147. 

\bibitem{M66}
---, \textit{Whitehead torsion}, Bull. Amer. Math. Soc. 72 1966 358-426.

\bibitem{S83}
P. Scott, \textit{The geometries of 3-manifolds}, Bull. London Math. Soc. 15 (1983), no. 5, 401-487.

\bibitem{Th}
W. P. Thurston, \textit{Three-dimensional geometry and topology}. Vol. 1. Princeton Mathematical Series, 35. Princeton University Press, Princeton, NJ, 1997.

\bibitem{Tu76}
V. G. Turaev, \textit{Reidemeister torsion and the Alexander polynomial}, Mat. Sb. (N.S.) 18(66) (1976), no. 2, 252-270.

\bibitem{Tu86}
---, \textit{Reidemeister torsion in knot theory}, Russian Math. Surveys 41:1 (1986), 119-182.

\bibitem{Tu90}
---, \textit{Euler structures, nonsingular vector fields, and Reidemeister-type torsions}, Math. USSR-Izv. 34 (1990), no. 3, 627-662.

\bibitem{Tu97}
---, \textit{Torsion invariants of Spin$^{c}$-structures on 3-manifolds}, Math. Res. Lett. 4 (1997), no. 5, 679-695.

\bibitem{Tu98}
---, \textit{A combinatorial formulation for the Seiberg-Witten invariants of 3-manifolds}, Math. Res. Lett. 5 (1998), 583-598.

\bibitem{Tu01}
---, \textit{Introduction to Combinatorial Torsions}. Lectures in Math. ETH Z\"{u}rich. Birkh\"{a}user, Basel, 2001.

\bibitem{Tu02}
---, \textit{Torsions of 3-manifolds}. Progress in Math., 208. Birkh\"{a}user Verlag, Basel, 2002.

\end{thebibliography}
\end{document}